\def\N{{\mathbb N}}
\def\Z{{\mathbb Z}}
\def\R{{\mathbb R}}
\def\Tb{{\mathbb T}}
\newcommand{\pr}[1]{\Pr\left(#1\right)}
\def\Fc{{\mathcal F}}
\def\hcX{{\widehat{\cX}}}
\def\org{\mathbf{0}}
\def\x{\mathbf{x}}
\def\y{\mathbf{y}}
\def\z{\mathbf{z}}
\def\0{\mathbf{0}}
\def\u{{\mathbf u}}
\def\v{{\mathbf v}}
\newcommand\normm[2]{\vertiii{#2}_{#1}}
\newcommand{\vertiii}[1]{{\left\vert\kern-0.25ex\left\vert\kern-0.25ex\left\vert #1  \right\vert\kern-0.25ex\right\vert\kern-0.25ex\right\vert}}
\def\be{\begin{equation}}
\def\ee{\end{equation}}
\def\bea{\begin{equation*}}
\def\eea{\end{equation*}}
\def\bal{\begin{aligned}}
\def\eal{\end{aligned}}
\def\ds{\displaystyle}
\def\eps{\varepsilon}
\newcommand\E{\operatorname{\mathbb E{}}}
\newcommand\PP{\operatorname{\mathbb P{}}}
\newcommand\Ex{\E}
\renewcommand\Pr{\PP}
\DeclareMathOperator{\Var}{Var}
\DeclareMathOperator{\ind}{{\bf 1}}
\def\d{\textup{d}}
\def\le{\leqslant}
\def\ge{\geqslant}
\newtheorem{thm}{Theorem}[section]
\newtheorem{lma}[thm]{Lemma}
\newtheorem{lemma}[thm]{Lemma}
\newtheorem{prop}[thm]{Proposition}
\newtheorem{claim}{Claim}
\theoremstyle{definition}
\newtheorem{remark}[thm]{Remark}
\theoremstyle{remark}
\newtheorem{preex}[thm]{Example}
\theoremstyle{definition}
\numberwithin{equation}{section}
\renewcommand\Re{\operatorname{Re}}
\xdef\klockan{\the\count1.0\the\count255}
\xdef\klockan{\the\count1.\the\count255}\fi
\newenvironment{romenumerate}[1][-10pt]{
\addtolength{\leftmargini}{#1}\begin{enumerate}
 }{\end{enumerate}}
\newcounter{CC}
\newcommand{\CC}{\stepcounter{CC}\CCx} 
\newcommand{\CCx}{C_{\arabic{CC}}}     
\newcommand{\CCdef}[1]{\xdef#1{\CCx}}     
\newcommand{\CCreset}{\setcounter{CC}0} 
\newcommand\gd{\delta}
\newcommand\gf{\varphi}
\newcommand\gam{\gamma}
\newcommand\gl{\lambda}
\newcommand\gs{\sigma}
\newcommand\gS{\Sigma}
\newcommand\gz{\zeta}
\newcommand\cB{\mathcal B}
\newcommand\cE{\mathcal E}
\newcommand\cF{\mathcal F}
\newcommand\cP{\mathcal P}
\newcommand\cR{{\mathcal R}}
\newcommand\cX{{\mathcal X}}
\newcommand\cY{{\mathcal Y}}
\newcommand\cZ{{\mathcal Z}}
\newcommand\bbN{\mathbb N}
\newcommand\bbZ{\mathbb Z}
\newcommand\set[1]{\ensuremath{\{#1\}}}
\newcommand\bigset[1]{\ensuremath{\bigl\{#1\bigr\}}}
\newcommand\xpar[1]{(#1)}
\newcommand\bigpar[1]{\bigl(#1\bigr)}
\newcommand\Bigpar[1]{\Bigl(#1\Bigr)}
\newcommand\sqpar[1]{[#1]}
\newcommand\bigsqpar[1]{\bigl[#1\bigr]}
\newcommand\Bigsqpar[1]{\Bigl[#1\Bigr]}
\newcommand\lrsqpar[1]{\left[#1\right]}
\newcommand\bigabs[1]{\bigl\lvert#1\bigr\rvert}
\newcommand\Bigabs[1]{\Bigl\lvert#1\Bigr\rvert}
\newcommand\lrabs[1]{\left\lvert#1\right\rvert}
\newcommand\norm[1]{\lVert#1\rVert}
\newcommand\bx{\mathbf{x}}
\newcommand\bo{\mathbf{0}}
\newcommand\bz{\mathbf{z}}
\newcommand\xcpar[1]{\{#1\}}
\newcommand\indic[1]{\boldsymbol1\xcpar{#1}}
\newcommand{\refT}[1]{Theorem~\ref{#1}}
\newcommand{\refL}[1]{Lemma~\ref{#1}}
\newcommand{\refLs}[1]{Lemmas~\ref{#1}}
\newcommand{\refR}[1]{Remark~\ref{#1}}
\newcommand{\refP}[1]{Proposition~\ref{#1}}
\newcommand{\refS}[1]{Section~\ref{#1}}
\newcommand{\refSS}[1]{Section~\ref{#1}}
\newcommand\qw{^{-1}}
\newcommand\qq{^{1/2}}
\newcommand\punkt{\xperiod}    
\newcommand\iid{i.i.d\punkt}    
\newcommand\ie{i.e\punkt}
\newcommand\eg{e.g\punkt}
\newcommand\cf{cf\punkt}
\newcommand{\as}{a.s\punkt}
\newcommand{\tend}{\longrightarrow}
\newcommand\asto{\overset{\mathrm{a.s.}}{\tend}}
\newcommand\eqd{\overset{\mathrm{d}}{=}}
\newcommand\QLE[2]{#1^{\le #2}}
\newcommand\QGT[2]{#1^{> #2}}
\newcommand\QGLE[3]{#1^{(#2,#3]}}
\newcommand\QLEz[1]{\QLE{\bgz}{#1}}
\newcommand\QGTz[1]{\QGT{\bgz}{#1}}
\newcommand\bgzi{\QGLE{\bgz}{r_i}{r_{i+1}}}
 \newcommand\bgzp{\bgz^+}
\newcommand\Yp{Y^p}
\newcommand\Ypq[1]{Y^p_{#1}}
\newcommand\Ypo{\Ypq{\0}}
\newcommand\Ypx{\Ypq{\x}}
\newcommand\Yx[1]{Y^{#1}}
\newcommand\Yhalf{\Yx{1/2}}
\newcommand\cYp{\cY^p}
\newcommand\etaz{\eta_{\z}}
\newcommand\cXq[1]{\cX_{#1}}
\newcommand\cXo{\cXq{\0}}
\newcommand\cXz{\cXq{\z}}
\newcommand\Xqq[2]{X_{#1,#2}}
\newcommand\Xzq[1]{\Xqq{\z}{#1}}
\newcommand\Xzx{\Xzq{\x}}
\newcommand\Zd{\Z^d}
\newcommand\Td{\Tb^d}
\newcommand\bgz{{\boldsymbol{\gz}}}
 \newcommand\bfeta{\boldsymbol\eta}
\newcommand\Mx{M^\ast}
\newcommand\hmu{\widehat\mu}
\newcommand\sumzzd{\sum_{\z\in\Zd}}
\newcommand\pfitemx[1]{\par#1:}
\newcommand\pfitemref[1]{\pfitemx{\ref{#1}}}
\newcommand\EVar[1]{\Var\lrsqpar{#1}}
\newcommand\EEVar[1]{\E\lrsqpar{#1^2}}
\newcommand\ntoo{\ensuremath{{n\to\infty}}}
\newcommand\rhou{\overline{\rho}}
\newcommand\rhol{\underline{\rho}}
\newcommand\Zo{Z_\0} 
\newcommand\gsf{$\gs$-field}
\newcommand\cZx{\cZ^\ast}
\newcommand\ActR{\mathcal{A}^R}
\newcommand\ActB{\mathcal{A}^B}
\newcommand\ppr{^{+,r}}
\newcommand\mmr{^{-,r}}
\newcommand\Br{B(\0,r)}
\newcommand\qmc{quasi-monochromatic}
\begin{document}

\title{To fixate or not to fixate in two-type annihilating\\ branching
  random walks}
\date{15 October, 2020}
\author{Daniel Ahlberg, Simon Griffiths and Svante Janson
}

\maketitle

\begin{abstract}
We study a model of competition between two types evolving as branching random walks on $\Z^d$. The two types are represented by red and blue balls respectively, with the rule that balls of different colour annihilate upon contact. We consider initial configurations in which the sites of $\Z^d$ contain one ball each, which are independently coloured red with probability $p$ and blue otherwise. We address the question of \emph{fixation}, referring to the sites eventually settling for a given colour, or not.
Under a mild moment condition on the branching rule, we prove that the process will fixate almost surely for $p\neq 1/2$, and that every site will change colour infinitely often almost surely for the balanced initial condition $p=1/2$.
\end{abstract}

\section{Introduction}\label{Sintro}

Position, on each site of a connected graph $G$, an urn. Each urn may contain either red or blue balls, but not both at once. At the dawn of time ($t=0$), red and blue balls are distributed in the urns according to some rule. The balls come equipped with unit-rate Poisson clocks, and when a clock rings, the corresponding ball immediately sends an independent copy of itself to each of the urns at neighbouring sites (while the ball with the clock remains where it is). As red and blue balls may not exist together in the same urn, they annihilate on a one-to-one basis.

In the case that $G$ is connected and \emph{finite}, the authors together with Morris~\cite{ahlgrijanmor19} have proved that the system of urns will eventually almost surely contain balls of only one colour. In the current paper we examine the process on the $d$-dimensional integer lattice $\Z^d$, for $d\ge1$, evolving from an initial configuration with a ball at each site, which independently from one another are coloured red with probability $p$ and blue otherwise. Our main result shows that for $p\neq1/2$ the colouring of the lattice induced by the urn process eventually \emph{fixates} almost surely on a single colour, and that for $p=1/2$ each site almost surely switches colour infinitely many times. We shall prove our results for a general family of branching mechanisms, further described below, of which the above mentioned nearest-neighbour rule is merely one example.


Models for systems of particles annihilating upon contact have a long history. The question of site recurrence in a one-dimensional system of (non-branching) random walkers annihilating upon contact was raised in the mid 1970s by Erd\H os and Ney~\cite{erdney74}.
Higher dimensional versions of the same problem was soon after considered by Griffeath~\cite{griffeath78} and Arratia~\cite{arratia83}. These problems concern a system of particles of a single type.
Analogous models consisting of two types of particles have been suggested in the physics literature as descriptive for the inert chemical reaction $A+B\to\emptyset$, see e.g.~\cite{ovczel78,touwil83}. These models tend to require a different set of techniques for their analysis. In this setting, Bramson and Lebowitz~\cite{braleb91a,braleb91b} derived the rate of decay of the density of particles in such a two-type model where particles perform simple random walks, and particles of different type annihilate upon contact.
In more recent work, Cabezas, Rolla and Sidoravicius~\cite{cabrolsid18} addressed site recurrence in a similar setting, and proved that the origin is visited at arbitrarily large times.
A related model has been considered by Damron, Gravner, Junge, Lyu and Sivakoff~\cite{damgrajunlyusiv19}.

The (discrete time) branching random walk first arose as a geometric interpretation of the evolution of generations in an age-dependent branching process, in work of Kingman~\cite{kingman75}, Biggins~\cite{biggins76,biggins77} and Bramson~\cite{bramson78}. Later work has explored important connections between branching random walks and their continuum counterpart, branching Brownian motion, to central objects in statistical physics such as spin glasses and the discrete planar Gaussian free field. For a more detailed discussion on these models and connections, we refer the reader to the monographs~\cite{bovier15,shi15,zeitouni16}.

Survival for a version of the branching random walk, where any two particles
annihilate upon contact, was studied by Bramson and
Gray~\cite{bragra85}.
Very much in spirit of their and other authors' work (cited above), and
further motivated by the corresponding question for Glauber dynamics of the
Ising model (see, e.g.,~\cite{fonschsid02,morris11}), we here address
fixation for the two-type annihilating urn system on $\Z^d$ starting from a
stationary random initial configuration.
In the monochromatic setting, in which all balls have the same colour, so
there are no annihilations, the process we study corresponds to a continuous
time branching random walk on the integer lattice. For this reason, we shall
interchangeably refer to the model we consider as a \emph{competing urn scheme}
and as a \emph{two-type annihilating branching random walk}.  
Our analysis of this process will in large parts be based on a combination of martingale techniques, and elements of Fourier analysis.

\subsection{Model and results}

We proceed with a somewhat more formal description of the model we consider,
and introduce some notation. We shall encode the presence of a red ball with
the value $+1$ and the presence of a blue ball by the value $-1$. This
encoding produces a bijection between particle configurations and
integer-valued vectors indexed by $\Z^d$. Below, a \emph{configuration} on
$\Z^d$ will refer to a vector $\bgz=(\zeta_\z)_{\z\in\Z^d}$ of integers. A
configuration $\bgz$ is said to be \emph{locally finite} if
$|\zeta_\z|<\infty$ for all $\z\in\Z^d$, and \emph{finite} if the total
number of particles $\|\bgz\|:=\sum_{\z\in\Z^d}|\zeta_\z|$ is finite. (For
typographical convenience, we occasionally write $\zeta(\z)$ for
$\zeta_\z$.)

Let $\Phi$ be a probability measure on finite non-negative configurations on
$\Z^d$, and let $\varphi$ denote a generic random configuration distributed
according to $\Phi$. Given a locally finite initial configuration $\bgz$, we
assign to each ball in $\bgz$ a clock independent from everything else. At
the ring of a clock at position $\z$, the corresponding ball makes an independent draw from
the distribution $\Phi$, and positions new balls accordingly, translated by
$\z$. (The ball with the clock is assumed to remain where it is, although we
shall comment on this restriction below, in Remark~\ref{r:death}.)
As before, all children have the same colour as their parent,
 and if one or several balls are positioned in an urn with balls of opposite
colour, then they immediately 
annihilate one for one until all remaining balls in the urn are of
the same colour. The nearest-neighbour rule (from~\cite{ahlgrijanmor19})
described above thus corresponds to $\Phi$ being the degenerate measure
supported on the configuration consisting of one ball at each of the $2d$
neighbours to the origin.

Since brevity is the soul of wit, we have here chosen to be brief;
we shall at later occasions in the text have more to say about the
construction of the process as the need arises.

We shall henceforth impose two restrictions on $\Phi$.
We say that $\Phi$ is \emph{irreducible}\footnote{%
Assuming that the
  offspring distribution is irreducible means no loss of generality, since
  we otherwise could consider the process on the subgroup $G$ of $\Z^d$
  generated by the support of $\Phi$; note that
$G\cong \Z^{d_1}$ for some $d_1\le d$ and that
the process on $\Z^d$ then decomposes into independent copies of the process
on $G$, supported on different translates (cosets) of $G$.
We ignore the trivial case when the support of $\Phi$ is $\set{\0}$;
then the urns are independent continuous-time branching processes, each with
a fixed colour.
} if it is not
supported on a proper subgroup of $\Z^d$.
We will assume throughout that $\Phi$ is irreducible
and that $\|\varphi\|$ has finite mean, so that
\be\label{eq:phi:mean}
0 < \lambda := \E[ \| \varphi \| ] < \infty.
\ee

\begin{remark}\label{Rfinite}
  We will for simplicity
only consider initial configurations with at most one ball at each
  site. 
(Although more general cases might also be interesting, see \refS{sec:open}.)
  In this case, and assuming \eqref{eq:phi:mean},  the
  process described above is well-defined and without explosions;
  more precisely, for any finite box $B(\0,r):=[-r,r]^d$ and any finite $T$,
  there is \as{} (almost surely) only a finite number of balls
  appearing in $B(\0,r)$ at some time in $[0,T]$, and as a consequence
  there is only a
  finite number of nucleations (branching events) and annihilations at any given site in a
  finite time interval.
  It is straightforward to verify these claims for any finite initial configuration.
  For monochromatic (possibly infinite) initial configurations
  these claims follow since
  the expected number of balls at any given site at time $t$ is at most
  $e^{\gl t}<\infty$, \cf{} \eqref{jb} with $p=1$.
In general,  
  as we detail in Section~\ref{sec:existence},
  we may formally 
define the annihilating process for arbitrary initial configurations
  as the \as{}\ limit of processes with
  finite initial configurations; the claimed properties are shown to carry over from the finite setting.
In addition to the above, since the process has only a finite number of jumps at each site
in each finite time interval, we may assume the standard convention
that the process is right-continuous with left limits.
\end{remark}

We aim in this paper to understand the evolution of the
annihilating system on $\Z^d$ starting from a stationary random initial
configuration. To be precise, given $p \in [0,1]$ and $d \ge 1$, define a
\emph{$p$-random Bernoulli colouring} of $\Z^d$ as follows: for each $\z \in
\Z^d$, the corresponding urn initially contains a single red ball with
probability $p$, and a single blue ball otherwise, all independently.
Hence, the resulting configuration corresponds to an element in $\{-1,1\}^{\Z^d}$.
We say that the two-type annihilating branching random walk 
\emph{fixates}\footnote{%
The times the different urns fixate are random and different; 
we do not claim that there is a single time when all urn have the same
colour.
(Indeed, since the system is infinite, we cannot expect this.)
} if there exists a colour $c$ such
that every urn eventually contains only balls of colour $c$.

As a measure on the displacement of balls in each nucleation, we define
for $r>0$,
\begin{align}\label{fir}
  \normm{r}{\varphi}\,:=\,\sum_{\z\in\Z^d}|\z|^r\varphi(\z).
\end{align}

Our main theorem is the following.
(By symmetry, it suffices to consider $p\ge\frac12$.)

\begin{thm}\label{thm:fixnonfix}
Let $d \in \N$, and let $\Phi$ be an irreducible probability measure on
finite configurations on $\Z^d$ such that 
$\E\big[\normm{1}{\varphi}^2\big]<\infty$,
$\E\big[\normm{2}{\varphi}\big]<\infty$, and
either $\E[\|\varphi\|^3]<\infty$ if $d=1$ or $\E[\|\varphi\|^2]<\infty$ if $d\ge2$.
Then, for the competing urn scheme on $\Z^d$ starting from a $p$-random
Bernoulli colouring, almost surely:
\begin{romenumerate}
\item\label{thm:fixnonfixa}
For $p>\frac12$ the system fixates;
each urn is eventually red. 
  \item\label{thm:fixnonfixb}
    For $p=\frac12$ every site changes colour infinitely often.
\end{romenumerate}
\end{thm}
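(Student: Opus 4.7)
The plan is to exploit the fact that red--blue annihilation preserves signed counts, which reduces the annihilating process to a sum of independent monochromatic branching random walks. Index the initial balls by their sites $\y\in\Z^d$. For each $\y$, let $N^{(\y)}_\z(t)$ denote the number of descendants at $\z$ at time $t$ in an independent monochromatic BRW started from one particle at $\y$, and let $\varepsilon_\y\in\{-1,+1\}$ be the initial sign, so that the $\varepsilon_\y$ are i.i.d.\ with $\E[\varepsilon_\y]=2p-1$ and independent of the trees. Because an annihilation removes equal numbers of $+1$ and $-1$ balls at the same urn, the signed occupancy $\zeta_\z(t)$ in the annihilating process equals
\begin{equation*}
\zeta_\z(t)\,=\,\sum_{\y\in\Z^d}\varepsilon_\y\,N^{(\y)}_\z(t)
\end{equation*}
almost surely, and the summands are independent across $\y$. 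By translation invariance $N^{(\y)}_\z\eqd N_{\z-\y}$, where $N$ is the BRW started from $\bo$; well-posedness of the sum follows from the existence assertions in \refR{Rfinite} together with finite-speed-of-propagation bounds.

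Everything then reduces to moment estimates for $N$. Directly, $\E[\zeta_\bo(t)]=(2p-1)e^{\lambda t}$ and $\Var[\zeta_\bo(t)]\le \sum_\y \E[N_\y(t)^2]$. I would estimate this right-hand side by Fourier analysis: writing $\widehat M(\xi,t)=\sum_\y \E[N_\y(t)]e^{i\xi\cdot\y}$, the branching ODE gives $\widehat M(\xi,t)=\exp\{t\,\E[\widehat\varphi(\xi)]\}$, and Parseval combined with the bound $\Re\E[\widehat\varphi(\xi)]\le \lambda-c|\xi|^2$ near zero (valid under $\E[\normm{2}{\varphi}]<\infty$) yields $\sum_\y \E[N_\y(t)]^2\asymp e^{2\lambda t}t^{-d/2}$. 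The remaining fluctuation contribution $\sum_\y \Var[N_\y(t)]$ is handled by a parallel many-to-two / spine computation, and the dimensional split in the hypotheses ($\E[\|\varphi\|^3]<\infty$ for $d=1$ versus $\E[\|\varphi\|^2]<\infty$ for $d\ge 2$) is precisely what is needed to match the $O(e^{2\lambda t}t^{-d/2})$ order. The upshot is $\Var[e^{-\lambda t}\zeta_\bo(t)]=O(t^{-d/2})$.

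For part (i), when $p>1/2$, Chebyshev's inequality together with Borel--Cantelli along a sufficiently sparse deterministic sequence $t_n$ gives $e^{-\lambda t_n}\zeta_\bo(t_n)\to 2p-1$ almost surely. A supplementary second-moment estimate on the oscillation of $\zeta_\bo(t)$ between $t_n$ and $t_{n+1}$ extends this to convergence for all real $t$, and since $2p-1>0$ we conclude $\zeta_\bo(t)>0$ for all large $t$ almost surely---that is, the urn at $\bo$ is eventually red. Translation invariance spreads the conclusion to every site.

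For part (ii), $\E[\zeta_\bo(t)]=0$ and the variance estimate identifies $e^{\lambda t}t^{-d/4}$ as the correct fluctuation scale, suggesting a Gaussian central limit theorem for $e^{-\lambda t}t^{d/4}\zeta_\bo(t)$. To upgrade a distributional CLT into almost sure infinite sign changes the plan is to choose times $t_n\to\infty$ together with nested spatial regions $A_n$ so that the signs $\{\varepsilon_\y:\y\in A_n\}$ contribute a positive fraction of $\Var[\zeta_\bo(t_n)]$ conditionally on the rest of the randomness. Under that conditioning, $\zeta_\bo(t_n)$ is a weighted sum of independent Rademacher variables of bounded conditional variance, so each sign occurs with conditional probability bounded away from zero; a conditional Borel--Cantelli argument then forces each of $\{\zeta_\bo(t_n)>0\}$ and $\{\zeta_\bo(t_n)<0\}$ to occur infinitely often, and translation invariance gives the statement at every site. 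The main obstacle is this last decoupling step---controlling the cross-time covariances of $\zeta_\bo$ well enough that the conditional probabilities and the Borel--Cantelli lemma can be combined---which is where the fine Fourier and second-moment analysis is pushed hardest, and the moment hypotheses in the theorem are most delicate.
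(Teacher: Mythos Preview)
Your central identity is false. You claim $\zeta_\z(t)=\sum_\y \varepsilon_\y N^{(\y)}_\z(t)$ with independent $N^{(\y)}$, reasoning that annihilation preserves signed counts; but annihilation only preserves the signed count at the instant and site where it occurs---it removes balls that would otherwise reproduce, and their missing descendants alter the signed count at other sites and later times. (Start with a red at $0$ and a blue at $1$ in $\Z$: if the red rings first and its child annihilates the blue, the blue never reproduces, whereas in your independent picture it does, and the two signed configurations diverge from that moment on.) The annihilating process is therefore \emph{not} a linear function of the signs $(\varepsilon_\y)$, and your part~(ii) step---conditioning on signs outside $A_n$ and treating $\zeta_\bo(t_n)$ as a weighted Rademacher sum---collapses. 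Your variance computation and your Chebyshev route to part~(i) inherit the same defect.

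The paper repairs this with a conservative coupling (\refL{Ltory}): merging red and blue into a branching purple ball yields $\cZ(t)=[\cR(t)+\cP(t)]-[\cB(t)+\cP(t)]$, where each bracket is \emph{in law} monochromatic but the two are dependent through $\cP$. For~(i) this suffices: apply \refT{thm:Zd-limit} to each bracket separately to get $e^{-\lambda t}Z_\bo(t)\to p-(1-p)>0$ a.s.; the monotonicity of the monochromatic pieces is what lets one pass from a discrete time skeleton to all $t$, and the extra moment for $d=1$ enters through Pruitt's theorem in that proof. For~(ii), independence across scales is recovered not by linearity in the signs but by monotonicity of the full annihilating dynamics in the initial configuration (\refL{Lmon}): one shows $Z_\bo(t_i,\bgz)\approx Z_\bo(t_i,\bgz^{(r_i,r_{i+1}]})$, the process run from only the balls initially in an annulus (\refLs{Llim} and~\ref{lma:resampling}), and these annulus processes use disjoint initial data and disjoint clock labels, hence are genuinely independent across $i$; \refP{prop:deviation} supplies the uniform positive probability needed for the second Borel--Cantelli lemma. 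Your high-level strategy for~(ii) is sound; it is the mechanism producing independence that must be replaced.
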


Furthermore, we 
define the density of red sites at time $t$ as the limit (if it exists)
\begin{equation}\label{eq:density}
\rho(t):=
\lim_{n\to\infty}\frac{1}{(2n+1)^d}\sum_{\z\in[-n,n]^d}\indic{\z\text{ is red at time }t},
\end{equation}
and show, in Section~\ref{sec:density}, that almost surely the limit exists
for all $t\ge0$ and satisfies $\rho(t)=\Pr(Z_\org(t)>0)$. In particular, for
$p>\frac12$, it follows that the density of red urns \as{} tends to $1$ as
$t\to\infty$. It is remarkable that for $p=\frac12$ our arguments do not
show that $\rho(t)\to\frac12$ as $t\to\infty$; see Section~\ref{sec:open}
for a more precise conjecture.

\begin{remark}\label{R4+eps}
For part \ref{thm:fixnonfixb} of Theorem~\ref{thm:fixnonfix} the condition
$\E[\|\varphi\|^2]<\infty$ suffices for all (irreducible) offspring distributions in
all dimensions $d\ge1$.
We do not know if the same condition suffices also for $d=1$ in
part~\ref{thm:fixnonfixa} of the theorem, or for Theorem~\ref{thm:Zd-limit} below,
on which the proof of part~\ref{thm:fixnonfixa} is based.
For $L^2$ convergence in Theorem~\ref{thm:Zd-limit}, the finite second moment condition
suffices for all (irreducible) offspring distributions in all dimensions
$d\ge1$.
We do not know whether the conditions
\be\label{eq:displacement}
\E\big[\normm{1}{\varphi}^2\big]<\infty
\quad\text{and}\quad
\E\big[\normm{2}{\varphi}\big]<\infty
\ee
on the spatial displacement that figure in our theorems are necessary.
\end{remark}

\begin{remark}\label{r:death}
In the definition of the model, as offspring is produced, the parent is assumed to remain where it is.
As a result, in the monochromatic version of the process, once a ball is born it remains in the 
same place at all future times. More generally we could assume that each ball lives for an exponentially
 distributed life time, at the end of which it reproduces according to $\Phi$ and disappears. This is 
 certainly more general, as $\Phi$ could be specified to produce a copy of the parent in its place with 
 probability one. In addition,
 this allows us, for instance, to consider models where the balls move according to continuous time random walks, which in each step branch with a non-zero probability.
We shall in Section~\ref{s:death} describe how our results can be extended to cover also this setting.
\end{remark}

Central in order to understand the annihilating process will be to closely examine the evolution of
the monochromatic process (without annihilations), in which each site of $\Z^d$ independently is
initially occupied by a particle with probability $p \in (0,1]$ and
otherwise empty.
Let $\cYp(t)=(\Yp_{\z}(t))_{\z\in\Z^d}$ 
be the configuration at time $t\ge0$ of this process, where thus $(\Yp_{\z}(0))_{\z\in\Z^d}$ are \iid{} Bernoulli with parameter $p$.

We shall prove the following result on the asymptotics of the monochromatic
system.

\begin{thm}\label{thm:Zd-limit}
Let $d \in \N$, and let $\Phi$ be an irreducible probability measure on
finite configurations on $\Z^d$ such that 
$\E\big[\normm{1}{\varphi}^2\big]<\infty$,
$\E\big[\normm{2}{\varphi}\big]<\infty$, and
either $\E[\|\varphi\|^3]<\infty$ if $d=1$ or $\E[\|\varphi\|^2]<\infty$ if $d\ge2$.
Then, for every $p \in [0,1]$ and every $\z\in\Zd$,
we have, with $\gl$ given by \eqref{eq:phi:mean},
\begin{equation}\label{tYlimit}
\lim_{t \to \infty} e^{-\lambda t} \Yp_\z(t) \,=\, p  
\end{equation}
almost surely and in $L^2$.
\end{thm}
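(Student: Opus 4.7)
My plan is to decompose $Y^p_\z(t)$ as a sum of independent contributions, compute its mean and variance explicitly, and bootstrap the resulting $L^2$ convergence to almost-sure convergence via Borel--Cantelli combined with the monotonicity of $Y^p_\z$. Write
\begin{equation*}
Y^p_\z(t) \,=\, \sum_{\y\in\Zd} \eta_\y\,X^{(\y)}(\z-\y,t),
\end{equation*}
where $\eta_\y$ are i.i.d.\ Bernoulli$(p)$ and the $X^{(\y)}$ are independent copies of the single-particle monochromatic branching random walk; let $m_t(\mathbf{w}):=\E X^{(\0)}(\mathbf{w},t)$. Since $\sum_{\mathbf{w}} m_t(\mathbf{w})=\E N(t)=e^{\lambda t}$, we get $\E Y^p_\z(t)=p\, e^{\lambda t}$, and by independence of summands,
\begin{equation*}
\Var\bigl(Y^p_\z(t)\bigr) \,=\, p\,\E\!\!\sum_{\mathbf{w}} X^{(\0)}(\mathbf{w},t)^2 \,-\, p^2\sum_{\mathbf{w}} m_t(\mathbf{w})^2,
\end{equation*}
so the $L^2$ statement reduces to showing both sums are $o(e^{2\lambda t})$.

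The squared-mean sum I would handle by Plancherel: $\sum_{\mathbf{w}} m_t(\mathbf{w})^2 = (2\pi)^{-d}\int_{[-\pi,\pi]^d} e^{2t\,\Re\hat\mu(\theta)}\,d\theta$, with $\mu:=\E\varphi$. Irreducibility forces $\Re\hat\mu(\theta)<\lambda$ off the origin, and $\E\normm{2}{\varphi}<\infty$ gives the quadratic expansion $\lambda-\Re\hat\mu(\theta)\asymp|\theta|^2$ near zero, so a Laplace estimate yields $e^{-2\lambda t}\sum m_t^2=O(t^{-d/2})$. For the second-moment sum I would use a many-to-two formula: a pair of distinct particles at time $t$ shares a unique most recent common ancestor, born at some branching event at time $s\in[0,t]$, and conditioning gives
\begin{equation*}
\E\sum_{\mathbf{w}} X^{(\0)}(\mathbf{w},t)^2 \,=\, e^{\lambda t} \,+\, \int_0^t e^{\lambda s}\,\E\Bigl[\sum_{i\ne j}(m_{t-s}*\check m_{t-s})(\mathbf{v}_i-\mathbf{v}_j)\Bigr]\,ds,
\end{equation*}
where the $\mathbf{v}_i$ are the positions of the parent and its offspring in the family. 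The hypothesis $\E[\|\varphi\|^2]<\infty$ controls the expected number of ordered sibling pairs per event, while Cauchy--Schwarz plus the preceding Fourier analysis yields $(m_r*\check m_r)(\mathbf{x})\le\sum_{\mathbf{w}} m_r(\mathbf{w})^2\le C e^{2\lambda r}(1+r)^{-d/2}$; splitting the integral at $s=t/2$ then gives $\E\sum X^{(\0)}(\cdot,t)^2=O(e^{2\lambda t}/t^{d/2})$, whence $\Var(e^{-\lambda t}Y^p_\z(t))=O(t^{-d/2})$ and $L^2$ convergence to $p$ follows.

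For the almost-sure statement I would exploit the fact that in the monochromatic process $Y^p_\z(t)$ is non-decreasing in $t$ (no annihilations, and each particle remains at its site forever): for any $\delta>0$ and $s\in[n\delta,(n+1)\delta]$, monotonicity yields $e^{-\lambda\delta}\cdot e^{-\lambda n\delta}Y^p_\z(n\delta)\le e^{-\lambda s}Y^p_\z(s)\le e^{\lambda\delta}\cdot e^{-\lambda (n+1)\delta}Y^p_\z((n+1)\delta)$. Chebyshev and Borel--Cantelli along the subsequence $\{n\delta\}$ would give a.s.\ convergence to $p$ of the normalised values at these times, whence $pe^{-\lambda\delta}\le\liminf\le\limsup\le pe^{\lambda\delta}$ almost surely, and letting $\delta\downarrow 0$ along rationals yields the a.s.\ limit. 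This works directly in $d\ge 3$, where $O(t^{-d/2})$ is summable along $\{n\delta\}$; for $d\in\{1,2\}$ the variance decays too slowly, and I would upgrade to a higher-moment estimate via Rosenthal's inequality applied to the independent sum, with the moments of $X^{(\0)}(\mathbf{w},t)$ bounded by iterating the many-to-two analysis into a many-to-$k$ analysis. The hypothesis $\E[\|\varphi\|^3]<\infty$ in dimension one enters precisely here, since each additional factor in the moment analysis introduces an extra $\|\varphi\|$ at each branching event. I expect the main obstacle to be this many-to-$k$ bookkeeping, and specifically the dichotomisation of the MRCA integral into a spreading regime ($t-s$ large, handled by Gaussian decay of the collision kernel $m_r*\check m_r$) and a recent regime ($t-s$ small, where the total population is the binding constraint); the dimension-dependent moment hypotheses reflect the precise transition at which diffusive spreading ceases to dominate branching fluctuations.
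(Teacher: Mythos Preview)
Your route to $L^2$ convergence is correct and close in spirit to the paper's: both obtain $\Var(e^{-\lambda t}Y^p_{\mathbf 0}(t))=O(t^{-d/2})$ via Fourier analysis of the one-ball mean, though the paper packages the second-moment computation through the martingales $M_{\mathbf u}(t)=e^{-\widehat\mu(\mathbf u)t}\widehat{\mathcal X}_{\mathbf u}(t)$ and their quadratic covariation rather than an explicit many-to-two formula. The monotonicity sandwich for upgrading subsequential to full a.s.\ convergence is also the same, and for $d\ge3$ your Chebyshev--Borel--Cantelli argument along $\{n\delta\}$ is exactly what the paper does.

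The gap is in your treatment of a.s.\ convergence for $d\in\{1,2\}$. Rosenthal's inequality applied to the independent sum yields $\E\bigl|e^{-\lambda t}Y^p_{\mathbf 0}(t)-p\bigr|^k=O(t^{-dk/4})$ (the term $(\sum_{\mathbf y}\Var\xi_{\mathbf y})^{k/2}$ dominates for $k>2$), and this is summable along $\{n\delta\}$ only when $k>4/d$. Hence for $d=1$ you need moments of order $>4$ on the summands, and for $d=2$ moments of order $>2$; the many-to-$k$ analysis translates this into $\E\|\varphi\|^{4+\varepsilon}<\infty$ and $\E\|\varphi\|^{2+\varepsilon}<\infty$ respectively, strictly stronger than the theorem's hypotheses. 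Your assertion that the third-moment condition ``enters precisely here'' is therefore not right: Rosenthal cannot reach the theorem as stated. The paper achieves the sharper conditions by a further decomposition your direct approach lacks. Writing $W_{\mathbf z}=\lim_t e^{-\lambda t}\|\mathcal X_{\mathbf z}(t)\|$ and $p_{\mathbf z}(t)=e^{-\lambda t}\E X_{\mathbf 0,-\mathbf z}(t)$, one splits $e^{-\lambda t}Y^p_{\mathbf 0}(t)-p$ into two remainder pieces of variance $O(t^{-(d+2)/2})$ (summable, hence a.s.\ null by Chebyshev and Borel--Cantelli under just second moments) plus the dominant piece $\Sigma_3(t)=\sum_{\mathbf z} p_{\mathbf z}(t)(\eta_{\mathbf z}W_{\mathbf z}-p)$. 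The crucial feature of $\Sigma_3$ is that it is a weighted average of \emph{fixed} i.i.d.\ mean-zero variables with deterministic weights satisfying $\sum_{\mathbf z} p_{\mathbf z}(t)=1$ and $\max_{\mathbf z} p_{\mathbf z}(t)=O(t^{-d/2})$; Pruitt's summability theorem then gives $\Sigma_3(n\delta)\to0$ a.s.\ under $\E|\eta_{\mathbf 0}W_{\mathbf 0}|^r<\infty$ with $r=\max\{1+2/d,2\}$, which is exactly $r=3$ for $d=1$ and $r=2$ for $d\ge2$ and transfers back to $\|\varphi\|$ via Bingham--Doney. Without isolating the time-independent randomness $W_{\mathbf z}$, the summands $\eta_{\mathbf y}X^{(\mathbf y)}(\mathbf z-\mathbf y,t)$ change distribution with $t$, Pruitt does not apply, and one is forced back to cruder moment bounds. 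Indeed, Remark~\ref{prutt} records that the authors' original proof used Rosenthal and required precisely the extra $\varepsilon$ in the moment exponent that your proposal would need.
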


\subsection{Outlines of proof and paper}

In the monochromatic process balls do not interact with each other, and in order to understand its asymptotics it will suffice to examine the evolution of each ball initially present in the system separately. For this purpose we let $\cXz(t)=(\Xzx(t))_{\x\in\Z^d}$ be the configuration at
time $t$ of the process started with a single ball at $\z$, \ie,
$\Xzx(0)=\gd_{\x,\z}$.
(These processes are obviously just translates of $\cXo(t)$, the evolution of a single ball started at the origin, but the
collection of all of them will be useful in our arguments.)
Note that the process $(\cXz(t))_{t\ge0}$ is a multi-type continuous time
Markov branching process with type space $\Z^d$; see e.g.\
\cite[Section~V.7]{athney72}. 
Moreover, the
dynamics of the process is translation invariant, which in particular
implies that $\norm{\cXz(t)}$, the total number of balls in the system, 
evolves as a
(single-type) continuous time Markov branching process 
in which 
each individual gets $\norm{\gf}$ children
with rate 1.
The finite moment
condition~\eqref{eq:phi:mean} is well-known to imply that the process is
almost surely finite at all times, see \cite[Section III.2]{athney72};
in fact, 
it is easily seen that
$e^{-\gl t}\norm{\cXz(t)}$ is a martingale, and thus, in particular,
\begin{align}\label{ex}
\E\norm{\cXz(t)}=e^{\gl t} \E\norm{\cXz(0)} = e^{\gl t},
\end{align}
see e.g.~\cite[Section III.4 and Theorem III.7.1]{athney72} or
Lemma~\ref{lma:martingale} below.

A large part of our work will consist in exploring the evolution of the process $\cXo(t)$ starting with a single
ball in \refS{sec:oneball}, and its implications for the monochromatic process $\cYp(t)$, which is studied in \refS{sec:monochromatic},
leading to a proof of \refT{thm:Zd-limit}. 
The analysis will be based on martingale techniques and elements of Fourier
analysis. 
An important step in the argument is a precise variance estimate, which is stated as \refP{propY}\ref{prop:var}.

We then return to the two-colour competition process, starting from a
$p$-random Bernoulli colouring, which we describe by the vector
$\cZ(t)=(Z_\bx(t))_{\bx\in\bbZ^d}$.
Although our main interest lies in the case of a random initial configuration,
we will in the proofs consider various versions, and we thus 
allow an arbitrary initial configuration
$\bgz=(\gz_\bx)_{\bx\in\bbZ^d}\in \set{-1,0,1}^{\bbZ^d}$, deterministic or random.
(Thus $\gz_\bx=-1$ means a blue ball at $\bx$, 1 means a red ball and 0 means
no ball.)
We let $\cZ(t,\bgz)$, for $t\ge0$, denote the process started from $\bgz$.
In particular, the monochromatic process $\cYp(t)$ equals (as a process)
$\cZ(t,\bgz)$ with $\bgz=(\zeta_\x)_{\x\in\Zd}$ independent Bernoulli with parameter $p$,
and $\cXz(t)$ corresponds to $\cZ(t,\bgz)$ with $\bgz=(\gd_{\bx,\bz})_{\bx\in\Zd}$, whereas $\cZ(t)$ itself corresponds to $\cZ(t,\bgz)$ with $\bgz$ being the $p$-random Bernoulli colouring whose entries are $\pm1$-valued and independent from one another.

We describe in \refS{sec:existence} how the annihilating process can be defined in a formal fashion, 
and derive along the way some properties that will be used in the proof of our main theorem.
One such preliminary result is a coupling, previously
employed in \cite{ahlgrijanmor19}, 
that enables us to ignore annihilations and instead study a
pair of (dependent) monochromatic processes, to which we can apply results
from previous sections.
Theorem~\ref{thm:fixnonfix}\ref{thm:fixnonfixa} 
then is as an easy
consequence of Theorem~\ref{thm:Zd-limit}. 
The balanced case, Theorem~\ref{thm:fixnonfix}\ref{thm:fixnonfixb}, will require a
finer analysis of the order of fluctuations of 
the monochromatic process, which suitably comes out as a side while proving
Theorem~\ref{thm:Zd-limit}, together with a decoupling argument showing that the
states of any finite set of sites are irrelevant for the long term evolution.
Details are given in Section~\ref{sec:fixnonfix}.

At the end of this paper, in \refS{sec:density}, we show that the density as defined in~\eqref{eq:density}
is well-defined, and in \refS{s:death}, we describe how to adapt our
arguments to cover the more general version of our process where balls are
assumed to die as they reproduce, cf.\ \refR{r:death}. 
Finally, \refS{sec:open} contains some further directions and open problems.

\section{The evolution of a single ball}\label{sec:oneball}

In this section we analyze the evolution of a single ball,
i.e., the process $\cXz(t)$. By translation invariance, we may without loss of generality assume $\z=\0$.
Furthermore, in this section (only), we drop the index $\z$ indicating the
starting position and
use
the notation $\cX(t)=(X_\x(t))_{\x\in\Zd}$ for $\cXo(t)$.
The analysis is based on a combination of
Fourier analysis and a martingale approach.
We assume throughout that~\eqref{eq:phi:mean} holds.

\subsection{Elements of Fourier analysis}

We proceed with the study of the process $(\cX(t))_{t\ge0}$, evolving from
a single ball initially at the origin. Recall that
$\cX(t)=(X_{\x}(t))_{\x\in\Z^d}$, for each $t\ge0$, is an almost surely
finite configuration on $\Z^d$. From a harmonic analysis point of view, the
dual group of $\Z^d$ is the cycle group $\Tb^d$, which we identify with
$(-\pi,\pi]^d$. Hence, there is a natural correspondence between
configurations on $\Z^d$ and certain complex-valued functions on $\Tb^d$. More
precisely, we define the \emph{Fourier transform} of $\cX(t)$ as 
\be\label{eq:Fourier}
\hcX_\u(t):=\sum_{\x\in\Z^d}e^{i\u\cdot\x}X_{\x}(t),\quad\u\in\Tb^d.
\ee
We note that for $t=0$ this definition yields $\hcX_\u(0)=1$, and for $\u=\0$ we obtain
\be\label{eq:hcX0}
\hcX_\0(t)=\sum_{\x\in\Z^d}X_{\x}(t)=\|\cX(t)\|,
\ee
the total number of balls at time $t$. In general, the inequality
$|\hcX_\u(t)|\le\|\cX(t)\|$ remains valid. 
$\cX(t)$ may be recovered via the \emph{inversion formula}:
\be\label{eq:inversion}
X_{\x}(t)=\int_{\Tb^d}e^{-i\u\cdot\x}\hcX_\u(t)\, \d\u,
\ee
where $\d\u$ denotes the normalized Lebesgue measure
$(2\pi)^{-d}\,\d\u_1\cdots \d\u_d$ on $\Tb^d$.
(This  is easily checked; plug in~\eqref{eq:Fourier} and compute the integral.)

Denote by $\mu=(\mu(\x))_{\x\in\Z^d}$ the coordinate-wise expectation of
$\Phi$, i.e.\ $\mu(\x):=\E[\varphi(\x)]$. 
Then, by \eqref{eq:phi:mean},
\begin{align}\label{eq:phi:mean-mu}
\norm{\mu}  
:=\sum_{\x\in\Zd} \mu(\x)
=\sum_{\x\in\Zd}\E \gf(\x)
=\E\sum_{\x\in\Zd} \gf(\x)
=\E\norm{\gf}=\gl <\infty.
\end{align}
Hence, also $\mu$ 
has a well-defined Fourier transform
$\widehat\mu(\u):=\sum_{\x\in\Zd}e^{i\u\cdot\x}\mu(\x)$;
note that 
\begin{align}\label{mu0}
\widehat\mu(\0)=\|\mu\|=\lambda.   
\end{align}
(The Fourier transform $\widehat{\zeta}(\u)$ of any finite configuration $\zeta$ on $\Z^d$ is defined analogously.)

As said in the introduction, it is well-known that 
$e^{-\gl t}\hcX_\0(t)=e^{-\gl t}\|\cX(t)\|$ is a  continuous time
martingale.
We extend this to arbitrary $\u\in\Tb^d$ in the next lemma. Let 
\begin{align}
  \label{Mu}
M_\u(t):=e^{-\widehat\mu(\u)t}\hcX_\u(t).
\end{align}
In particular, by \eqref{mu0} and \eqref{eq:hcX0},
\begin{align}
  \label{M0}
M_\0(t):=e^{-\gl t}\norm{\cX(t)}.
\end{align}

\begin{lma}\label{lma:martingale}
The process $(M_\u(t))_{t\ge0}$ is a martingale for each $\u\in\Tb^d$. In
particular, 
\be\label{eq:Fmalthusian}
\E[\hcX_\u(t)]=e^{\widehat\mu(\u)t},\quad\u\in\Tb^d.
\ee
\end{lma}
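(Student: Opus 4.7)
The plan is to prove the martingale property in two stages: first establish the mean formula $\E[\hcX_\u(t)]=e^{\widehat\mu(\u)t}$ in~\eqref{eq:Fmalthusian}, then upgrade this to the full martingale statement via the branching Markov property.

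For the mean formula, I would compute the drift of $\hcX_\u(t)$ using the infinitesimal generator of $\cX(t)$. Each of the $X_\y(t)$ balls at a site $\y$ rings at rate $1$; when one of them rings and draws offspring $\varphi\sim\Phi$, new balls are added at positions $\y+\x$ for each $\x$ in the support of $\varphi$, so that $\hcX_\u$ changes by $\sum_\x e^{i\u\cdot(\y+\x)}\varphi(\x)=e^{i\u\cdot\y}\widehat\varphi(\u)$. Averaging over $\varphi$ and summing over all balls produces the instantaneous drift $\widehat\mu(\u)\hcX_\u(t)$. Taking unconditional expectations yields the ODE $\tfrac{d}{dt}\E[\hcX_\u(t)]=\widehat\mu(\u)\E[\hcX_\u(t)]$, whose solution with initial condition $\hcX_\u(0)=1$ is~\eqref{eq:Fmalthusian}.

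For the second stage, I would invoke the branching Markov property: conditional on $\cF_t$, the evolution after time $t$ decomposes as an independent superposition of single-ball subprocesses, one for each ball present at time $t$. Linearity of the Fourier transform and translation invariance then give that each ball at position $\y$ contributes $e^{i\u\cdot\y}\E[\hcX_\u(s)]=e^{i\u\cdot\y}e^{\widehat\mu(\u)s}$ in expectation to $\hcX_\u(t+s)$, so that
\begin{equation*}
\E[\hcX_\u(t+s)\mid\cF_t]=e^{\widehat\mu(\u)s}\hcX_\u(t).
\end{equation*}
Multiplying by $e^{-\widehat\mu(\u)(t+s)}$ yields the martingale identity $\E[M_\u(t+s)\mid\cF_t]=M_\u(t)$.

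Integrability is immediate: $|M_\u(t)|\le e^{-(\Re\widehat\mu(\u))t}\|\cX(t)\|$, and since $|\widehat\mu(\u)|\le\widehat\mu(\0)=\lambda$ and $\E\|\cX(t)\|=e^{\lambda t}$ by~\eqref{ex}, we obtain $\E|M_\u(t)|\le e^{2\lambda t}<\infty$. The main technical subtlety is making the generator computation rigorous for the complex-valued observable $\hcX_\u$; this can be handled by treating its real and imaginary parts as two separate real observables, or more directly by exploiting that $\hcX_\u(t)$ is \as{} a finite sum, so that the drift at each state reduces to an absolutely convergent finite expression and Dynkin's formula applies without technical difficulty.
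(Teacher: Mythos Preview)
Your proposal is correct and follows essentially the same approach as the paper: first establish \eqref{eq:Fmalthusian} by computing the instantaneous drift of $\hcX_\u(t)$ (each ball at $\y$ contributes an expected jump $e^{i\u\cdot\y}\widehat\mu(\u)$ at rate $1$, giving the ODE $\tfrac{d}{dt}\E[\hcX_\u(t)]=\widehat\mu(\u)\E[\hcX_\u(t)]$), and then deduce the martingale property from the Markov and branching structure together with translation invariance. Your write-up is somewhat more explicit than the paper's---you spell out the conditional expectation computation, add the integrability bound, and flag the Dynkin-formula justification---but the logic is identical.
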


Taking $\u=\0$ in \refL{lma:martingale} 
we recover, using \eqref{M0}, the fact noted above that
$e^{-\gl t}\norm{\cX(t)}$ is a martingale, and in particular, 
since also $\norm{\cX(0)}=1$,
that
\be\label{eq:malthusian}
\E[\|\cX(t)\|]=e^{\lambda t},
\ee
i.e., that \eqref{ex} holds.

\begin{proof}[Proof of Lemma~\ref{lma:martingale}]
We prove first~\eqref{eq:Fmalthusian}. Once~\eqref{eq:Fmalthusian} has been
proven the martingale property will follow from
the Markov and branching properties together with 
homogeneity in time and space.
Hence, it will suffice to prove~\eqref{eq:Fmalthusian}. 

Note that, almost surely, no two clocks ever ring at the same time. If the
clock rings for a ball at $\z$, then $\cX(t)$ jumps by (a copy of) $\varphi$
translated by the vector $\z$. Hence, $\hcX_\u(t)$ then jumps by
\be\label{eq:hcX_jump}
\Delta\hcX_\u(t)\,=\,\sum_{\y\in\Z^d}e^{i \u\cdot(\z+\y)}\varphi(\y)\,=\,e^{i\u\cdot\z}\widehat\varphi(\u),
\ee
and the expected jump of $\hcX_\u(t)$, given that the clock rings for a ball
at $\z$, is
\begin{align}
  e^{i\u\cdot\z}\E[\widehat\varphi(\u)]\,=\,e^{i\u\cdot\z}\widehat\mu(\u),
\end{align}
which by~\eqref{eq:phi:mean-mu} is finite.
Since the number of balls at $\z$ is $X_{\z}(t)$, and each rings with
intensity 1, this implies
\begin{align}
  \frac{\d}{\d t} \E[\hcX_\u(t)]
\,=\,\sum_{\z\in\Z^d}\E[X_{\z}(t)]e^{i\u\cdot\z}\widehat\mu(\u)
\,=\,\widehat\mu(\u)\E[\hcX_\u(t)],
\end{align}
and~\eqref{eq:Fmalthusian} follows by the initial condition $\hcX_\u(0)=1$.
\end{proof}

As another consequence of \refL{lma:martingale},
we obtain a formula for the expected number of balls at a given
position. Since $|\hcX_\u(t)|\le\|\cX(t)\|$ and 
$\E\norm{\hcX(t)}<\infty$,
we may combine the inversion
formula~\eqref{eq:inversion}, Fubini's theorem and \eqref{eq:Fmalthusian} 
to obtain the expression 
\be\label{eq:exp_inversion}
\E[X_{\z}(t)]
=\int_{\Tb^d}e^{-i\u\cdot\z}\E [\hcX_{\u}(t)]\,\d\u
=\int_{\Tb^d}e^{-i\u\cdot\z}e^{\widehat\mu(\u)t}\,\d\u.
\ee

\subsection{Second moment analysis}

To obtain higher moments of the process we will require a stronger
assumption on the moments of $\Phi$. This is also where
condition~\eqref{eq:displacement} on the displacement of $\Phi$ comes
in. 

We begin by noting that
the condition $\E[\|\varphi\|^2]<\infty$ implies that
$\E[\|\cX(t)\|^2]<\infty$ for all $t\ge0$, see
\cite[Corollary~III.6.1]{athney72} or \cite[Theorem~6.3.6]{jagers75}. Since
$|\hcX_\u(t)|\le\|\cX(t)\|$ we have as a consequence that
$\E[M_\u(t)^2]<\infty$ for all $\u\in\Tb^d$ and $t\ge0$;
in other words, $M_\u(t)$ is a square-integrable martingale for every
$\u\in\Td$.
The following proposition shows that 
under the
condition $\Re\widehat\mu(\u)>\frac12\lambda$, this martingale is $L^2$-bounded. 

\begin{prop}\label{prop:higher}
Assume that $\E[\|\varphi\|^2]<\infty$,
and let  $\u\in\Tb^d$ be such that
$\Re\widehat\mu(\u)>\frac12\lambda$.
Then the process
$(M_\u(t))_{t\ge0}$ is an $L^2$-bounded martingale; in particular, the limit
$\Mx_\u:=\lim_{t\to\infty}M_\u(t)$ exists almost surely and in $L^2$.
Furthermore, there exists a constant $C(\u)$, which is 
uniformly bounded for $\Re\widehat\mu(\u)-\tfrac12\lambda\ge c$ for any $c>0$,
such that for all $t\ge0$
\be\label{eq:higher1}
\E\big[|M_\u(t)-\Mx_\u|^2\big]\,\le\,
C(\u)\E[\|\varphi\|^2]\,e^{-(2\Re\widehat\mu(\u)-\lambda)t},
\ee
and if, in addition, 
$\E\big[\normm{1}{\varphi}^2\big]<\infty$ and
$\E\big[\normm{2}{\varphi}\big]<\infty$, then for all $t\ge0$ 
\be\label{eq:higher2}
\E\big[|M_\u(t)-M_\0(t)|^2\big]\,\le\,C(\u)|\u|^2.
\ee
\end{prop}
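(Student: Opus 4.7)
My plan is to reduce both estimates to closed-form expressions for the mixed second moments $g_{\u,\v}(t):=\E[M_\u(t)\overline{M_\v(t)}]$, $\u,\v\in\Tb^d$, derived via Dynkin's formula for the underlying continuous-time Markov branching process $\cX(\cdot)$. The hypothesis $\E\|\gf\|^2<\infty$ ensures $\E\|\cX(t)\|^2<\infty$ for all $t$, so all such quantities are finite and Dynkin's formula applies to $\hcX_\u(t)\overline{\hcX_\v(t)}$. Estimate \eqref{eq:higher1} will come from $g_{\u,\u}$ alone, while \eqref{eq:higher2} will require combining the three cases $(\u,\u)$, $(\0,\0)$ and $(\u,\0)$ and uncovering a cancellation.

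Following the bookkeeping used in the proof of \refL{lma:martingale}, when the clock of a ball at $\z$ rings the product $\hcX_\u\overline{\hcX_\v}$ jumps by
\bea
e^{i\u\cdot\z}\widehat\gf(\u)\,\overline{\hcX_\v(s)} + e^{-i\v\cdot\z}\overline{\widehat\gf(\v)}\,\hcX_\u(s) + e^{i(\u-\v)\cdot\z}\widehat\gf(\u)\overline{\widehat\gf(\v)}.
\eea
Summing over $\z$ weighted by $X_\z(s)$ (using $\sum_\z X_\z(s)e^{\pm i\w\cdot\z}=\hcX_{\pm\w}(s)$), taking expectations, and invoking \eqref{eq:Fmalthusian} on the pure-jump term, we obtain a linear first-order ODE in $s$ for $\E[\hcX_\u(s)\overline{\hcX_\v(s)}]$, whose explicit solution—after multiplication by $e^{-\widehat\mu(\u)t-\overline{\widehat\mu(\v)}t}$ to pass from $\hcX$ to $M$—reads
\bea
g_{\u,\v}(t) = 1 + \E[\widehat\gf(\u)\overline{\widehat\gf(\v)}]\cdot\frac{1 - e^{-D(\u,\v)t}}{D(\u,\v)},\qquad D(\u,\v):=\widehat\mu(\u)+\overline{\widehat\mu(\v)}-\widehat\mu(\u-\v).
\eea

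For \eqref{eq:higher1} I set $\v=\u$, so that $D(\u,\u)=2\Re\widehat\mu(\u)-\lambda>0$ by hypothesis; the expression for $g_{\u,\u}(t)$ is then uniformly bounded in $t$ by $1+\E\|\gf\|^2/(2\Re\widehat\mu(\u)-\lambda)$, establishing $L^2$-boundedness of $M_\u$. The martingale convergence theorem (applied to the real and imaginary parts of $M_\u$) furnishes the almost sure and $L^2$ limit $\Mx_\u$, and orthogonality of martingale increments then yields $\E|\Mx_\u-M_\u(t)|^2 = \E|\Mx_\u|^2 - g_{\u,\u}(t) = \E|\widehat\gf(\u)|^2\,e^{-(2\Re\widehat\mu(\u)-\lambda)t}/(2\Re\widehat\mu(\u)-\lambda)$. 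Inequality \eqref{eq:higher1} follows using $|\widehat\gf(\u)|\le\|\gf\|$, and uniformity of $C(\u)$ on $\Re\widehat\mu(\u)-\tfrac12\lambda\ge c$ is immediate.

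For \eqref{eq:higher2}, combining the three cases $(\u,\u)$, $(\0,\0)$, $(\u,\0)$ and regrouping (the main piece of work) yields
\bea
\E|M_\u(t) - M_\0(t)|^2 = \E|\widehat\gf(\u)|^2\!\left(\frac{1-e^{-at}}{a}-\frac{1-e^{-\lambda t}}{\lambda}\right) + \frac{1-e^{-\lambda t}}{\lambda}\,\E\bigl|\widehat\gf(\u)-\|\gf\|\bigr|^2,
\eea
where $a:=2\Re\widehat\mu(\u)-\lambda$; here one uses that $\|\gf\|$ is real to identify the last factor. The second summand is $O(|\u|^2)$ via $|e^{i\u\cdot\z}-1|\le|\u||\z|$, which gives $\E|\widehat\gf(\u)-\|\gf\||^2\le|\u|^2\E\normm{1}{\gf}^2$; for the first summand, writing $(1-e^{-as})/a=\int_0^t e^{-as}\,\d s$ yields $|(1-e^{-at})/a-(1-e^{-\lambda t})/\lambda|\le|a-\lambda|/\min(a,\lambda)^2$, and then $1-\cos(x)\le x^2/2$ gives $|a-\lambda|=2|\sum_\z(1-\cos(\u\cdot\z))\mu(\z)|\le|\u|^2\E\normm{2}{\gf}$. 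The resulting constant $C(\u)$ is controlled by $1/\min(a,\lambda)^2$, uniformly bounded on $\Re\widehat\mu(\u)-\tfrac12\lambda\ge c$. The principal obstacle is precisely exposing this cancellation: each of $M_\u(t)$ and $M_\0(t)$ has $\Theta(1)$ second moment, so the $|\u|^2$-gain is invisible without grouping the six terms of $g_{\u,\u}+g_{\0,\0}-2\Re g_{\u,\0}$ so that two vanishings emerge—namely $\widehat\gf(\u)-\|\gf\|=O(|\u|)$ (first-order Taylor of $\widehat\gf$ at the origin) and $\Re\widehat\mu(\u)-\lambda=O(|\u|^2)$ (second-order Taylor of $\Re\widehat\mu$).
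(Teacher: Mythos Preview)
Your proof is correct and follows essentially the same route as the paper. The paper also derives the closed-form second-moment identity (packaged there as a separate lemma, proved via the quadratic covariation $[M_\u,M_\v]$ rather than Dynkin's formula, but the underlying jump bookkeeping is identical), then applies it exactly as you do: setting $\v=\u$ for \eqref{eq:higher1}, and for \eqref{eq:higher2} performing the same regrouping into the two pieces $\E|\widehat\gf(\u)|^2\bigl(\int_0^t e^{-ax}\,\d x-\int_0^t e^{-\lambda x}\,\d x\bigr)$ and $\frac{1}{\lambda}\E|\widehat\gf(\u)-\widehat\gf(\0)|^2$, bounded respectively via $\lambda-\Re\widehat\mu(\u)\le\tfrac12|\u|^2\E\normm{2}{\gf}$ and $|\widehat\gf(\u)-\widehat\gf(\0)|\le|\u|\,\normm{1}{\gf}$.
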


The following lemma will be the first step towards the above proposition.

\begin{lma}\label{lma:higher}
Suppose $\E[\|\varphi\|^2]<\infty$.
For every $\u,\v\in\Tb^d$ and $t\ge0$ we have
\begin{align}
  \E\big[M_\u(t)M_\v(t)\big]
=1+\E\big[\widehat\varphi(\u)\widehat\varphi(\v)\big]\int_0^te^{\widehat\mu(\u+\v)x-[\widehat\mu(\u)+\widehat\mu(\v)]x}\,\d x.
\end{align}
\end{lma}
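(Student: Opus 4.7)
My strategy is to compute $g(t):=\E[\hcX_\u(t)\hcX_\v(t)]$ directly from the Markov--branching dynamics, obtain a first-order linear ODE for $g$, and solve it explicitly. Since $\E[M_\u(t)M_\v(t)]=e^{-[\widehat\mu(\u)+\widehat\mu(\v)]t}g(t)$, the stated formula will then fall out after dividing through by the exponential factor. The route mirrors the proof of \refL{lma:martingale}, now applied to the quadratic functional $F(\cX):=\hcX_\u\hcX_\v$.

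\textbf{Deriving the ODE.} The key step is to identify the infinitesimal generator $\mathcal{L}$ acting on $F$. When a clock at position $\z$ rings, an independent $\varphi\sim\Phi$ is added translated by $\z$, and by \eqref{eq:hcX_jump} this increments $\hcX_\mathbf{w}$ by $e^{i\mathbf{w}\cdot\z}\widehat\varphi(\mathbf{w})$ for every $\mathbf{w}\in\Tb^d$. Expanding the resulting increment of the product $\hcX_\u\hcX_\v$, averaging over the independent $\varphi$ (using $\E\widehat\varphi(\mathbf{w})=\widehat\mu(\mathbf{w})$), and summing over $\z$ against the clock rates $X_\z(t)$, the identity $\sum_\z X_\z(t)e^{i\mathbf{w}\cdot\z}=\hcX_\mathbf{w}(t)$ collapses the sums and yields
\begin{align*}
\mathcal{L}F(\cX) = [\widehat\mu(\u)+\widehat\mu(\v)]\,\hcX_\u\hcX_\v + \E[\widehat\varphi(\u)\widehat\varphi(\v)]\,\hcX_{\u+\v}.
\end{align*}
Taking expectations and substituting $\E\hcX_{\u+\v}(t)=e^{\widehat\mu(\u+\v)t}$ from \eqref{eq:Fmalthusian} gives the linear ODE
\begin{align*}
g'(t) = [\widehat\mu(\u)+\widehat\mu(\v)]g(t) + \E[\widehat\varphi(\u)\widehat\varphi(\v)]\,e^{\widehat\mu(\u+\v)t}, \qquad g(0)=1,
\end{align*}
whose standard solution by variation of constants (integrating factor $e^{-[\widehat\mu(\u)+\widehat\mu(\v)]t}$) produces exactly the claimed formula upon dividing by $e^{[\widehat\mu(\u)+\widehat\mu(\v)]t}$.

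\textbf{Main obstacle.} The only step requiring genuine care is justifying the generator calculation and the passage to the ODE, since we work with complex-valued, possibly unbounded quantities on an infinite state space. The second moment assumption $\E[\|\varphi\|^2]<\infty$ gives $\E[\|\cX(t)\|^2]<\infty$ for every fixed $t$ by \cite[Corollary~III.6.1]{athney72}, hence $|\hcX_\u\hcX_\v|\le\|\cX\|^2$ is integrable, and $\E|\widehat\varphi(\u)\widehat\varphi(\v)|\le\E\|\varphi\|^2<\infty$. A clean way to make the argument rigorous is to apply Dynkin's formula to $F$, localised if necessary by the first time $\|\cX(t)\|$ exceeds $N$ and then passing $N\to\infty$ by dominated convergence, to obtain the integrated form $g(t)=1+\int_0^t\{[\widehat\mu(\u)+\widehat\mu(\v)]g(s)+\E[\widehat\varphi(\u)\widehat\varphi(\v)]e^{\widehat\mu(\u+\v)s}\}\,\d s$; continuity of the integrand in $s$ then yields the ODE by differentiation. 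Everything after that is routine algebra.
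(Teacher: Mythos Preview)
Your proof is correct. The generator computation, the ODE, and its solution are all right, and your remarks on justification via localisation and dominated convergence (using $|\hcX_\u\hcX_\v|\le\|\cX\|^2$ and the second-moment bound from \cite[Corollary~III.6.1]{athney72}) are adequate.

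The paper takes a closely related but organisationally different route: rather than computing the generator of $\hcX_\u\hcX_\v$ and solving an ODE, it works directly with the square-integrable martingales $M_\u,M_\v$ and invokes their quadratic covariation $[M_\u,M_\v]$. Since $M_\u M_\v-[M_\u,M_\v]$ is a martingale vanishing at $0$, one has $\E[M_\u(t)M_\v(t)]=\E[[M_\u,M_\v](t)]$, and because the martingales have finite variation, $[M_\u,M_\v]$ is a pure jump process with $\Delta[M_\u,M_\v]=\Delta M_\u\,\Delta M_\v$. Computing $\tfrac{\d}{\d t}\E[[M_\u,M_\v](t)]$ then gives the integrand directly. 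The payoff of the paper's approach is that the exponential factors in $M_\u,M_\v$ absorb the drift terms $[\widehat\mu(\u)+\widehat\mu(\v)]\hcX_\u\hcX_\v$ automatically, so only the quadratic jump term survives and no ODE needs to be solved; your approach trades this martingale machinery for a more elementary generator calculation plus an integrating-factor step. Both arguments rest on the same jump identity \eqref{eq:hcX_jump} and the same integrability inputs.
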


\begin{proof}
The two processes $(M_\u(t))_{t\ge0}$ and $(M_\v(t))_{t\ge0}$ are square-integrable martingales. 
Hence, their quadratic covariation 
$[M_\u,M_\v](t)$ is well-defined; see e.g.~\cite[Section II.6]{protter90}. 
(Although not needed here, we note that it may be defined as the following limit,
in probability \cite[Theorem II.23]{protter90},
\begin{align}
  [M_\u,M_\v](t):=
M_\u(0)M_\v(0)+
\lim_{|P_n|\to0}\sum_{k=1}^n\bigl(M_\u(t_k)-M_\u(t_{k-1})\bigr)
\bigl(M_\v(t_k)-M_\v(t_{k-1})\bigr),
\end{align}
where $(P_n)_{n\ge1}$ is some sequence of partitions 
$0=t_{0}<t_{1}<\dots<t_{n}=t$
of $[0,t]$ with mesh 
 $\max_k|t_k-t_{k-1}|$
tending to zero.)

The process $\{M_\u(t)M_\v(t)-[M_\u,M_\v](t):t\ge0\}$ is again a martingale
\cite[Corollary 2 to Theorem II.27]{protter90}, 
which vanishes at $t=0$ by definition,
and thus
\be\label{eq:XY-[XY]}
\E\big[M_\u(t)M_\v(t)\big]
\,=\,\E\big[[M_\u,M_\v](t)\big]\quad\text{for all }t\ge0.
\ee
Furthermore, $M_\u(t)$ and $M_\v(t)$
have finite variation on each compact time interval (since each realisation has
piece-wise smooth trajectories).
This implies
\cite[Theorems~II.26 and~II.28]{protter90} 
that $[M_\u,M_\v](t)$ is 
a pure jump process with jumps given by
\begin{align}
  \Delta[M_\u,M_\v](t)\,=\,\Delta M_\u(t)\Delta M_\v(t)
\,=\,e^{-(\widehat\mu(\u)+\widehat\mu(\v))t}\Delta\hcX_\u(t)\Delta\hcX_\v(t).
\end{align}
Similarly to the proof of Lemma~\ref{lma:martingale}, if the clock of a ball
at $\z$ rings, then by~\eqref{eq:hcX_jump} 
\begin{align}
\Delta\hcX_\u(t)\Delta\hcX_\v(t)\,=\,e^{i(\u+\v)\cdot\z}\widehat\varphi(\u)\widehat\varphi(\v).
\end{align}
Since the number of balls at $\z$ is $X_{\z}(t)$ and each rings with
intensity 1, we obtain from the above and \eqref{eq:Fmalthusian} that
\begin{align}
\frac{\d}{\d  t}\E\big[[M_\u,M_\v](t)\big]\,
&=\,\sum_{\z\in\Z^d}\E[X_{\z}(t)]e^{-(\widehat\mu(\u)+\widehat\mu(\v))t}e^{i(\u+\v)\cdot\z}\E\big[\widehat\varphi(\u)\widehat\varphi(\v)\big]
\notag\\ 
&=\,\E[\hcX_{\u+\v}(t)]e^{-(\widehat\mu(\u)+\widehat\mu(\v))t}\E\big[\widehat\varphi(\u)\widehat\varphi(\v)\big]
\notag\\ 
&=\,e^{(\widehat\mu(\u+\v)-\widehat\mu(\u)-\widehat\mu(\v))t}\E\big[\widehat\varphi(\u)\widehat\varphi(\v)\big].
\label{krk}
\end{align}
Integrating \eqref{krk} over the interval $[0,t]$,
recalling that
$[M_\u,M_\v](0)=M_\u(0)M_\v(0)=1$, 
and then using~\eqref{eq:XY-[XY]}
completes the proof.
\end{proof}

\begin{proof}[Proof of Proposition~\ref{prop:higher}]
Note that the complex conjugates of $\hcX_\u(t)$ and $\widehat\mu(\u)$ are
given by $\hcX_{-\u}(t)$ and $\widehat\mu(-\u)$, and consequently that 
\begin{align}
  |M_\u(t)|^2=M_\u(t)\overline{M_\u(t)}=M_\u(t)M_{-\u}(t).
\end{align}
By Lemma~\ref{lma:higher} we find that
\begin{align}\label{eq:lemma}
\E\big[|M_\u(t)|^2\big]
\,&=\,1+\E\big[|\widehat\varphi(\u)|^2\big]\int_0^t
 e^{(\widehat\mu(\0)-\widehat\mu(\u)-\widehat\mu(-\u))x}\,\d x
\notag\\
&=\,1+\E\big[|\widehat\varphi(\u)|^2\big]\int_0^t e^{(\lambda-2\Re\widehat\mu(\u))x}\,\d x.
\end{align}
Hence, for $\u\in\Tb^d$ such that $2\Re\widehat\mu(\u)>\lambda$ the 
complex-valued martingale
$(M_\u(t))_{t\ge0}$ is bounded in $L^2$. The existence of an almost sure and
$L^2$ limit $\Mx_\u$ is now a consequence of the 
martingale convergence theorem.
Moreover, as increments over disjoint time
intervals for square-integrable martingales are uncorrelated, we have for any
$s\ge t$ that 
\begin{align}
  \E\big[|M_\u(s)|^2\big]=\E\big[|M_\u(s)-M_\u(t)|^2\big]+\E\big[|M_\u(t)|^2\big]
\end{align}
and hence by~\eqref{eq:lemma}, since
$\E[|\widehat\varphi(\u)|^2]\le\E[\|\varphi\|^2]$, that 
\begin{align}
  \E\big[|M_\u(s)-M_\u(t)|^2\big]
\,=\,\E\big[|M_\u(s)|^2\big]-\E\big[|M_\u(t)|^2\big]
\,\le\,\E\big[\|\varphi\|^2\big]\int_t^s e^{(\lambda-2\Re\widehat\mu(\u))x}\,\d x.
\end{align}
Sending $s\to\infty$ thus yields~\eqref{eq:higher1}.

Arguing for~\eqref{eq:higher2} we first observe that
\begin{align}
  |M_\u(t)-M_\0(t)|^2
\,=\,M_\u(t)M_{-\u}(t)+M_\0(t)^2-M_\u(t)M_\0(t)-M_{-\u}(t)M_\0(t).
\end{align}
Hence, Lemma~\ref{lma:higher} gives that
\begin{align}
\E\bigsqpar{|M_\u(t)-M_\0(t)|^2}
\,&=\,\E\big[|\widehat\varphi(\u)|^2\big]
 \int_0^t e^{(\lambda-2\Re\widehat\mu(\u))x}\,\d x
 +\E\big[|\widehat\varphi(\0)|^2\big]\int_0^t e^{-\lambda x}\,\d x
\notag\\
&\qquad-2\Re\E\big[\widehat\varphi(\u)\widehat\varphi(\0)\big]
  \int_0^t e^{-\lambda x}\,\d x
\notag\\
&=\,\E\big[|\widehat\varphi(\u)|^2\big]\int_0^t
  \left(e^{(\lambda-2\Re\widehat\mu(\u))x}-e^{-\lambda x}\right)\,\d x
\notag\\
&\qquad+\E\big[|\widehat\varphi(\u)-\widehat\varphi(\0)|^2\big]\int_0^t e^{-\lambda x}\,\d x.
\end{align}
Since $\E[|\widehat\varphi(\u)|^2]\le\E[\|\varphi\|^2]$, estimating the
integrals leads to the upper bound
\begin{align}
\E\bigsqpar{|M_\u(t)-M_\0(t)|^2}
&\le
\E\big[\|\varphi\|^2\big]
\left(\frac{1}{2\Re\widehat\mu(\u)-\lambda}-\frac1\lambda\right)
+\frac1\lambda\E\big[|\widehat\varphi(\u)-\widehat\varphi(\0)|^2\big]
\notag\\
&\le
\E\big[\|\varphi\|^2\big]
\frac{2(\gl-\Re\hmu(\u))}{(2\Re\widehat\mu(\u)-\lambda)\gl}
+\frac1\lambda\E\big[|\widehat\varphi(\u)-\widehat\varphi(\0)|^2\big].  
\label{eq:h-bound1}
\end{align}
In order to obtain an upper bound of order $|\u|^2$ we first note that
\begin{align}
  |e^{i\u\cdot\z}-1|
\,\le\,|\u\cdot\z|
\le|\u||\z|,
\end{align}
using the mean-value theorem and Cauchy--Schwarz' inequality. 
Hence, recalling \eqref{fir},
\be\label{eq:h-bound2}
|\widehat\varphi(\u)-\widehat\varphi(\0)|\,\le\,\sum_{\z\in\Z^d}|e^{i\u\cdot\z}-1|\varphi(\z)\,\le\,|\u|\sum_{\z\in\Z^d}|\z|\varphi(\z)\,=\,|\u|\normm{1}{\varphi}.
\ee
Similarly we obtain, since $\mu(\z)=\E[\gf(\z)]$,
\be\label{eq:h-bound3}
\gl-\Re\widehat\mu(\u)=
\widehat\mu(\0)-\Re\widehat\mu(\u)
=\sum_{\z\in\Z^d}\bigpar{1-\cos(\u\cdot\z)}\E[\varphi(\z)]
\,\le\,\tfrac12|\u|^2\E[\normm{2}{\varphi}].
\ee
Hence, plugging~\eqref{eq:h-bound2} and~\eqref{eq:h-bound3}
into~\eqref{eq:h-bound1} leaves us with
\begin{align}
  \E\big[|M_\u(t)-M_\0(t)|^2\big]
\,\le\,\E\big[\|\varphi\|^2\big]\frac{|\u|^2\E[\normm{2}{\varphi}]}{(2\Re\widehat\mu(\u)-\lambda)\lambda}+\frac{1}{\lambda}|\u|^2\E\big[\normm{1}{\varphi}^2\big]
=O\bigpar{|\u|^2}
\end{align}
as required.
\end{proof}

\subsection{Bounds on the spatial displacement of balls}

We next consider the mean spatial distribution of $\cX(t)$.
Recall that the expected total number of balls at time $t$
is $\E\norm{\cX(t)}=e^{\gl t}$ by \eqref{eq:malthusian}.
Define
\begin{align}
  \label{jc}
  p_\z(t)
  \, :=\,\frac{\E[X_{-\z}(t)]}{\E\|\cX(t)\|}
  \,=\,e^{-\lambda t}\E[X_{-\z}(t)],
\end{align}
the proportion of the expected number of balls at time $t$ 
that are expected to be at $-\z$, when starting (as always in this section)
from a single ball at the origin. Note that $p_\z(t)$ coincides with the expected contribution to the origin of a ball started at $\z$.
(The choice of $-\z$ is just for notational convenience in later sections, 
e.g.\ in~\eqref{EXzo} and~\eqref{qaa}.)
Note that, trivially by the definitions, for every $t\ge0$, we have $p_\z(t)\ge0$ and
\begin{align}\label{sump}
  \sum_{\z\in\Zd} p_\z(t)=1.
\end{align}

We shall next derive some key quantitative estimates that 
we shall use in later sections.

\begin{prop}\label{prop:key}
Assume that $\Phi$ is irreducible and satisfies $\E[\|\varphi\|^2]<\infty$,
$\E\big[\normm{1}{\varphi}^2\big]<\infty$ and
$\E\big[\normm{2}{\varphi}\big]<\infty$. Then, for $t\ge1$,
\begin{romenumerate}
\item\label{prop:key1}
  $\quad\ds \sup_{\z\in\Z^d}p_\z(t)=O(t^{-d/2})$,
\item\label{prop:key2}
  $\quad\ds \sum_{\z\in\Z^d}p_\z(t)^2=\Theta(t^{-d/2})$,
\item\label{prop:key3}
  $\quad\ds e^{-2\lambda t}\sum_{\z\in\Z^d}\E\left[\big(X_{\z}(t)-p_{-\z}(t)\|\cX(t)\|\big)^2\right]=O(t^{-(d+2)/2})$.
\end{romenumerate}
\end{prop}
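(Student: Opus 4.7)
My plan is to prove all three parts via Fourier analysis. The starting point is the inversion identity~\eqref{eq:exp_inversion}, which gives
\[
p_\z(t) \,=\, \int_{\Tb^d} e^{i\u\cdot\z}\, e^{(\widehat\mu(\u)-\gl)t}\,\d\u,
\]
together with the spectral fact that $\Re\widehat\mu(\u)-\gl$ is continuous on $\Tb^d$, everywhere non-positive, vanishes only at $\u=\0$ (by irreducibility of $\Phi$), and admits a two-sided quadratic bound $-c_2|\u|^2 \le \Re\widehat\mu(\u)-\gl \le -c_1|\u|^2$ in a fixed neighbourhood of $\0$. The upper inequality follows from $1-\cos y\le y^2/2$ together with $\E[\normm{2}{\varphi}]<\infty$, while the lower inequality uses positive-definiteness of the second moment matrix $Q=\sum_{\x\in\Zd}\x\x^\top\mu(\x)$, which in turn follows from irreducibility of $\Phi$.

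For \ref{prop:key1}, taking absolute values inside the Fourier integral gives the uniform bound $|p_\z(t)|\le \int_{\Tb^d} e^{(\Re\widehat\mu-\gl)t}\,\d\u$, which is $O(t^{-d/2})$ by standard Gaussian-type estimates: the quadratic bound provides a Gaussian factor near $\0$ and exponential decay on the complement. For \ref{prop:key2}, Parseval's identity yields the exact expression $\sum_\z p_\z(t)^2 = \int_{\Tb^d} e^{2(\Re\widehat\mu-\gl)t}\,\d\u$, and the same quadratic bound applied from above and below delivers the matching $\Theta(t^{-d/2})$ behaviour.

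Part \ref{prop:key3} is the most delicate. Using $\hcX_\u=e^{\widehat\mu(\u)t}M_\u(t)$ and $\|\cX(t)\|=e^{\gl t}M_\0(t)$, together with the fact that the Fourier series $\sum_\z e^{i\u\cdot\z} p_{-\z}(t)$ coincides with $e^{(\widehat\mu(\u)-\gl)t}$, Parseval yields
\[
\sum_\z \bigl(X_\z(t)-p_{-\z}(t)\|\cX(t)\|\bigr)^2 \,=\, \int_{\Tb^d} e^{2\Re\widehat\mu(\u)t}\,\bigl|M_\u(t)-M_\0(t)\bigr|^2\,\d\u,
\]
so after taking expectations and multiplying by $e^{-2\gl t}$ the claim reduces to proving
$\int_{\Tb^d} e^{2(\Re\widehat\mu-\gl)t}\,\E|M_\u-M_\0|^2\,\d\u = O(t^{-(d+2)/2})$. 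The explicit expansion of $\E|M_\u-M_\0|^2$ extracted from the proof of \refP{prop:higher} decomposes the integrand as $\E|\widehat\varphi(\u)|^2 I_1(\u,t) + \E|\widehat\varphi(\u)-\widehat\varphi(\0)|^2 I_2(t)$, with $I_2\le 1/\gl$ and $I_1=\int_0^t(e^{(\gl-2\Re\widehat\mu)x}-e^{-\gl x})\,\d x$. The second summand is controlled by~\eqref{eq:h-bound2}, namely $\E|\widehat\varphi(\u)-\widehat\varphi(\0)|^2\le |\u|^2 \E[\normm{1}{\varphi}^2]$; its integrated contribution is $O(t^{-(d+2)/2})$ by the same Gaussian-type estimate as in \ref{prop:key2}, with an extra $|\u|^2$ factor. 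For the first summand, after bounding $\E|\widehat\varphi(\u)|^2\le \E\|\varphi\|^2$ I apply Fubini and the substitution $r=t-s$, which rewrites the resulting integral as
\[
\int_0^t e^{-\gl r}\bigl[L(t-r)-L(t)\bigr]\,\d r,
\qquad
L(s)\,:=\,\int_{\Tb^d} e^{2(\Re\widehat\mu(\u)-\gl)s}\,\d\u.
\]
Since $L(s)=\Theta(s^{-d/2})$ by part \ref{prop:key2} and the analogous Gaussian estimate with a $|\u|^2$ factor gives $|L'(s)|=O(s^{-(d+2)/2})$, the convexity of $L$ (i.e., $|L'|$ is decreasing) yields the mean-value bound $L(t-r)-L(t)\le r\,|L'(t-r)|$, and the remaining integral is estimated as $\int_0^t e^{-\gl r}\, r\,(t-r)^{-(d+2)/2}\,\d r = O(t^{-(d+2)/2})$, with the contribution from $r\in[t-1,t]$ handled separately via the trivial bound $L(t-r)-L(t)\le L(0)=1$.

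The principal technical obstacle lies in part \ref{prop:key3}: while a Fubini swap is quickly seen to be the right first move, naïve bounds on the first summand produce only $O(t^{-d/2})$, and the improvement to $O(t^{-(d+2)/2})$ rests on the cancellation that emerges in the identity above, together with a derivative estimate on $L$. The two-sided quadratic spectral bound near $\0$ is the other essential input; everything else is standard Fourier inversion and Gaussian tail estimation.
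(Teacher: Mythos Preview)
Your argument is correct. Parts \ref{prop:key1} and \ref{prop:key2} follow the paper's approach essentially verbatim: Fourier inversion for the sup bound, Parseval for the $\ell^2$ sum, and the two-sided quadratic bound on $\lambda-\Re\widehat\mu(\u)$ near $\0$ together with a uniform positive lower bound on the complement.

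For part \ref{prop:key3} you take a genuinely different route from the paper. Both proofs start from the same Parseval identity
\[
e^{-2\lambda t}\sum_\z\E\bigl[(X_\z(t)-p_{-\z}(t)\|\cX(t)\|)^2\bigr]
=\int_{\Tb^d} e^{2(\Re\widehat\mu(\u)-\lambda)t}\,\E\bigl|M_\u(t)-M_\0(t)\bigr|^2\,\d\u.
\]
The paper then simply invokes the bound \eqref{eq:higher2} of \refP{prop:higher}, namely $\E|M_\u(t)-M_\0(t)|^2\le C(\u)|\u|^2$ uniformly in $t$ for $\u$ near $\0$ (where $C(\u)$ is bounded), together with a crude exponential bound on the complementary region; integrating $|\u|^2 e^{-2c|\u|^2 t}$ over $\Tb^d$ then gives $O(t^{-(d+2)/2})$ in one line. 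You instead return to the explicit two-term formula for $\E|M_\u(t)-M_\0(t)|^2$ from the \emph{proof} of \refP{prop:higher}: the second term you handle via \eqref{eq:h-bound2} exactly as the paper does, but for the first term you bypass the pointwise-in-$\u$ bound, swap the order of integration, and reduce to the convolution $\int_0^t e^{-\lambda r}[L(t-r)-L(t)]\,\d r$ with $L(s)=\int e^{-2(\lambda-\Re\widehat\mu)s}\,\d\u$, which you control via the derivative estimate $|L'(s)|=O(s^{-(d+2)/2})$ and convexity of $L$. This works, and it is a nice self-contained argument that does not rely on the uniform-in-$t$ conclusion \eqref{eq:higher2}; on the other hand it is longer, since \eqref{eq:higher2} already encodes the $|\u|^2$ smallness of the $I_1$ term via \eqref{eq:h-bound3}, and plugging that in avoids the Fubini/derivative detour entirely.
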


\begin{proof}
We start with \ref{prop:key1}, and observe that by the definition \eqref{jc}
and~\eqref{eq:exp_inversion} we have 
\begin{align}
p_\z(t)
\,&=\,
e^{-\lambda t}\E[X_{-\z}(t)]
=\int_{\Tb^d}e^{i\u\cdot\z}e^{-\xpar{\lambda-\widehat\mu(\u)}t}\,\d\u
\,\le\,\int_{\Tb^d}e^{-\xpar{\lambda-\Re\widehat\mu(\u)}t}\,\d\u.
\label{eq:p_bound}\end{align}
To further bound the integral we have the following well-known standard
estimate, which highlights the importance of the irreducibility
assumption.
Note that a complementary upper bound (for all $\u$)
is given in \eqref{eq:h-bound3},
provided $\E[\normm{2}{\varphi}]<\infty$.

\begin{claim}\label{claim:irreducibility}
Assume that $\Phi$ is irreducible. 
Then, $\Re\widehat\mu(\u)<\lambda$ for all $\u\in\Tb^d\setminus\{\0\}$, and
there exists $c>0$ such that
\begin{align}
  \label{claim1}
\lambda-\Re\widehat\mu(\u)\,\ge\,c|\u|^2\quad\text{for all }|\u|\le c.
\end{align}
\end{claim}

\begin{proof}[Proof of Claim]
The statement is obtained by analyzing the identity
\begin{align}
  \lambda-\Re\widehat\mu(\u)
\,=\,\widehat\mu(\0)-\Re\widehat\mu(\u)
\,=\,\sum_{\z\in\Z^d}\big[1-\cos(\u\cdot\z)\big]\mu(\z).
\end{align}
We omit the details.
\end{proof}

Since $\Phi$ is irreducible, Claim~\ref{claim:irreducibility} gives a constant
$c>0$ such that~\eqref{claim1} holds. 
Let $K_c:=\{\u\in\Tb^d:|\u|\ge c\}$.
Since $K_c$ is compact, 
Claim~\ref{claim:irreducibility} and continuity of $\hmu(\u)$
also gives a 
constant $\gamma>0$ such that
$\lambda-\Re\widehat\mu(\u)\ge\gamma$ on $K_c$. Hence, together
with~\eqref{eq:p_bound}, 
\begin{align}
  \sup_{\z\in\Z^d}p_\z(t)
\,\le\,\int_{|\u|<c}e^{-c|\u|^2t}\,\d\u+\int_{K_c}e^{-\gamma t}\,\d\u
\,=\,O(t^{-d/2})+O(e^{-\gamma t}).
\end{align}
This proves part~\ref{prop:key1}.

The upper bound in \ref{prop:key2} is immediate from \ref{prop:key1} and 
\eqref{sump}.
Due to the identity in~\eqref{eq:p_bound} we may for a lower bound use Parseval's
formula together with \eqref{eq:h-bound3} to obtain that
\begin{align}
  \sum_{\z\in\Z^d}p_\z(t)^2
\,=\,\int_{\Tb^d}\lrabs{e^{-(\lambda-\widehat\mu(\u))t}}^2\,\d\u
\,\ge\,\int_{\Td}e^{-Ct|\u|^2 }\,\d\u
=t^{-d/2}\int_{(-\pi t\qq,\pi t\qq]^d}e^{-C|\u|^2 }\,\d\u,
\end{align}
where $C=\E[\normm2{\gf}]/2$, which for $t\ge1$ is bounded below by a constant times $t^{-d/2}$.

For \ref{prop:key3} we recall 
\eqref{M0} and
the definition
\eqref{jc}
of $p_\z(t)$,
by
which
\begin{align}
  X_{\z}(t)-p_{-\z}(t)\|\cX(t)\|\,=\,X_{\z}(t)-\E[X_{\z}(t)]M_\0(t).
\end{align}
Using the inversion formula~\eqref{eq:inversion}
and~\eqref{eq:exp_inversion} we find this equal to
\begin{align}
\int_{\Tb^d}e^{-i\u\cdot\z}\big[\hcX_\u(t)-e^{\widehat\mu(\u)t}M_\0(t)\big]\,\d\u\,=\,\int_{\Tb^d}e^{-i\u\cdot\z}e^{\widehat\mu(\u)t}\big[M_\u(t)-M_\0(t)\big]\,\d\u.
  \label{malin}
\end{align}
The right-hand side of \eqref{malin}
is the Fourier transform of a function on $\Tb^d$. Hence, by Parseval's formula, we obtain
\begin{align}
\sum_{\z\in\Z^d}\Bigabs{X_{\z}(t)-p_{-\z}(t)\|\cX(t)\|}^2
\,=\,\int_{\Tb^d}\Bigabs{e^{\widehat\mu(\u)t}\big[M_\u(t)-M_\0(t)\big]}^2\,\d\u.  
\end{align}
Taking expectation
yields
\be\label{eq:after_parseval}
\sum_{\z\in\Z^d}\E\Big[\big|X_{\z}(t)-p_{-\z}(t)\|\cX(t)\|\big|^2\Big]\,=\,\int_{\Tb^d}e^{2\Re\widehat\mu(\u)t}\E\big[|M_\u(t)-M_\0(t)|^2\big]\,\d\u.
\ee

Let $c>0$, $K_c$ and $\gamma>0$ be as above, so that
$\lambda-\Re\widehat\mu(\u)\ge c|\u|^2$ when $|\u|\le c$ and
$\lambda-\Re\widehat\mu(\u)\ge\gamma$ on 
on the complementary set
$K_c$.
We may
without loss of generality assume that $c>0$ was chosen so that also
$\lambda-\Re\widehat\mu(\u)\le\lambda/4$ for $|\u|\le c$ and that
$\gamma\le\lambda/4$. 

For $|\u|\le c$  we use \eqref{claim1} and \eqref{eq:higher2}, and find that
for some constant 
$\CC$, 
\begin{align}
  \label{ql}
e^{2\Re\widehat\mu(\u)t}\E\big[|M_\u(t)-M_\0(t)|^2\big]
\,\le\,\CCx e^{2(\lambda-c|\u|^2)t}|\u|^2,
\qquad |\u|\le c.
\CCdef{\Cql}
\end{align}
Next we observe that for all $\u$, \eqref{eq:lemma} implies that there
exists 
a constant $\CC$ such that
\begin{align}
\E\big[|M_\u(t)-M_\0(t)|^2\big]
\,&\le\,
2\E\big[|M_\u(t)|^2\big]+2\E\big[|M_\0(t)|^2\big]
\le\,\CCx \Bigsqpar{1+\int_0^te^{(\lambda-2\Re\widehat\mu(\u))x}\,\d x}.
\label{b2}
\end{align}
By distinguishing between the cases 
$2\Re\widehat\mu(\u)\ge\frac54\lambda$ and
$2\Re\widehat\mu(\u)\le\frac54\lambda$,
we obtain from \eqref{b2}, rather crudely,
\begin{align}
\E\big[|M_\u(t)-M_\0(t)|^2\big]
&
\le\CC 
e^{\max\bigset{0,\tfrac32\lambda-2\Re\widehat\mu(\u)}t}.
\label{b2b}
\end{align}
Hence, on $K_c$,
\begin{align}
e^{2\Re\hmu(\u)t}\E\big[|M_\u(t)-M_\0(t)|^2\big]
&
\le\CCx 
e^{\max\bigset{2\Re\hmu(\u),\tfrac32\lambda}t}
\le\CCx 
e^{2(\gl-\gam)t},
\qquad \u\in K_c.
\CCdef{\Cqm}
\label{qm}
\end{align}
Combining \eqref{eq:after_parseval} with the estimates \eqref{ql}
  and \eqref{qm} yields
\begin{align}
e^{-2\lambda  t}
\sum_{\z\in\Z^d}\E\Big[\big|X_{\z}(t)-p_{-\z}(t)\|\cX(t)\|\big|^2\Big]
\,&\le\,\int_{|\u|<c}\Cql e^{-2c|\u|^2t}|\u|^2\,\d\u
+\int_{K_c}\Cqm e^{-2\gamma t}\,\d\u
\notag\\
&=\,O(t^{-(d+2)/2})+O(e^{-2\gamma t}).
\end{align}
This proves \ref{prop:key3} and thus completes the proof.
\end{proof}

\section{The monochromatic process}\label{sec:monochromatic}

We have defined the monochromatic process $\cYp(t)=(\Yp_\x(t))_{\x\in\Zd}$
as the process with independent Bernoulli distributed random initial values
$\Yp_\x(0)$
with parameter $p$.
Our main goal in the present section is to prove \refT{thm:Zd-limit}. A key step will be to derive a variance bound that will be central also later.
By translation invariance, $\Ypx(t)$ has the same distribution for all
$\x\in\Zd$, and we may consider only $\x=\0$.

We begin by introducing a useful representation. Let
$\bfeta=(\etaz)_{\z\in\Z^d}$ be a vector of independent 
Bernoulli distributed entries with parameter $p$.
(I.e., $\etaz\in\set{0,1}$ with $\Pr(\etaz=1)=p$.)
For each $\z\in\Zd$, let, as above, 
$\cXz(t)=(\Xzx(t))_{\x\in\Zd}$ be the process started with a single ball at $\z$,
and assume further that these processes are independent of each other and
of $\bfeta$.
Then we can construct the process $\cYp$ as
$\cYp(t)=\sumzzd \etaz \cXz(t)$, i.e., the process which for $\x\in\Z^d$ and $t\ge0$ is given by
\begin{align}\label{jac}
  \Ypx(t) = \sumzzd \etaz\Xzx(t).
\end{align}
(This is because in the monochromatic process there are no annihilations
and balls evolve independently.)

We next use this representation to prove the following key proposition.
\begin{prop}\label{propY}
Assume that $\Phi$ is irreducible and 
that $p\in(0,1]$.
\begin{romenumerate}
\item   \label{prop:E}
If $\gl=\E[\norm{\gf}]<\infty$, then
for every $t\ge0$,
\begin{align}\label{propE}
\E\bigsqpar{e^{-\lambda t}\Ypo(t)}
= p.
\end{align}

\item \label{prop:var}
If\/ $\E[\|\varphi\|^2]<\infty$,
$\E\big[\normm{1}{\varphi}^2\big]<\infty$ and
$\E\big[\normm{2}{\varphi}\big]<\infty$, then
for some constant $C=C(p,\Phi)$ and all $t\ge1$,
\begin{align}\label{propvar}
\Var\bigsqpar{e^{-\lambda t}\Ypo(t)}
\,=\, C\sum_{\z\in\Z^d}p_\z(t)^2+O(t^{-(d+1)/2})\,=\,\Theta(t^{-d/2}).
\end{align}
\end{romenumerate}
\end{prop}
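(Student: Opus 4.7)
The plan is to exploit the representation \eqref{jac}: $\Ypo(t) = \sumzzd \etaz \Xzo(t)$ with the pairs $\{(\etaz, \cXz(\cdot))\}_{\z\in\Zd}$ independent across $\z$. Part \ref{prop:E} is then immediate: by independence and translation invariance, $\E[\Xzo(t)] = \E[X_{-\z}(t)] = e^{\gl t}p_\z(t)$ in view of the definition \eqref{jc}, and hence $\E[\Ypo(t)] = p\,e^{\gl t}\sumzzd p_\z(t) = p\,e^{\gl t}$ by the normalization \eqref{sump}.

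For part \ref{prop:var} the same independence gives $\Var[\Ypo(t)] = \sumzzd \Var[\etaz \Xzo(t)]$, and conditioning on $\etaz$ splits each summand as
\begin{align*}
\Var[\etaz \Xzo(t)] \,=\, p\,\Var[\Xzo(t)] + p(1-p)\,\E[\Xzo(t)]^2.
\end{align*}
The second contributions sum to $p(1-p)\,e^{2\gl t}\sumzzd p_\z(t)^2$, which after dividing by $e^{2\gl t}$ already produces part of the main term. The remaining task is to control $\sumzzd \Var[\Xzo(t)]$, which is where Proposition \ref{prop:key}\ref{prop:key3} must be brought to bear.

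To do so I write $\Xzo(t) - \E[\Xzo(t)] = A_\z + B_\z$, where
\begin{align*}
A_\z \,=\, \Xzo(t) - p_\z(t)\norm{\cXz(t)}, \qquad B_\z \,=\, p_\z(t)\bigl(\norm{\cXz(t)} - e^{\gl t}\bigr),
\end{align*}
both of mean zero by \eqref{jc} and \eqref{eq:malthusian}. By translation invariance, Proposition \ref{prop:key}\ref{prop:key3} delivers $\sumzzd \E[A_\z^2] = O(e^{2\gl t}\,t^{-(d+2)/2})$. Next, $\sumzzd \E[B_\z^2] = \Var[\norm{\cX(t)}]\,\sumzzd p_\z(t)^2$, and evaluating \eqref{eq:lemma} at $\u=\0$ shows $e^{-2\gl t}\Var[\norm{\cX(t)}] = \Var[M_\0(t)] \to V_\infty := \E[\norm{\gf}^2]/\gl$. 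Cauchy--Schwarz then bounds the cross term $\bigl|\sumzzd \E[A_\z B_\z]\bigr|$ by $O(e^{2\gl t}\,t^{-(d+1)/2})$. Collecting the pieces and dividing by $e^{2\gl t}$, the contribution $p\sumzzd \Var[\Xzo(t)]$ equals $pV_\infty \sumzzd p_\z(t)^2 + O(t^{-(d+1)/2})$ (the correction $V(t)-V_\infty = O(e^{-\gl t})$ is absorbed into the error since $\sumzzd p_\z(t)^2$ is polynomially small), and adding the earlier $p(1-p)\sumzzd p_\z(t)^2$ yields \eqref{propvar} with $C = p(1-p) + p V_\infty > 0$. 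The $\Theta(t^{-d/2})$ conclusion follows from Proposition \ref{prop:key}\ref{prop:key2}.

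The main obstacle I anticipate is the subtle mismatch between the bound delivered by Proposition \ref{prop:key}\ref{prop:key3}, which compares $X_\z(t)$ to the \emph{random} quantity $p_{-\z}(t)\norm{\cX(t)}$, and what the variance calculation needs, namely the comparison to the \emph{deterministic} $e^{\gl t}p_\z(t)$. The decomposition $A_\z + B_\z$ is designed precisely to separate these two effects: one is controlled by the $t^{-(d+2)/2}$-estimate of \ref{prop:key3}, the other by the limiting martingale variance $V_\infty$. The cross term sits exactly in between at order $t^{-(d+1)/2}$, which is why the stated error cannot be strengthened by this route but is on the nose sufficient for what is needed later.
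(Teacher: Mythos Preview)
Your proof is correct and reaches the same constant $C=p(1-p)+p\,\E[\norm{\gf}^2]/\gl$ and the same error exponent as the paper, but by a genuinely different route. The paper writes
\[
e^{-\gl t}\Ypo(t)-p=\Sigma_1(t)+\Sigma_2(t)+\Sigma_3(t),
\]
a three-term decomposition that brings in the martingale limits $W_\z=\lim_{t\to\infty}e^{-\gl t}\norm{\cXz(t)}$: the term $\Sigma_3(t)=\sum_\z p_\z(t)(\etaz W_\z-p)$ carries the main variance $(p\E[W_\0^2]-p^2)\sum_\z p_\z(t)^2$, while $\Sigma_1$ and $\Sigma_2$ are shown to be $O(t^{-(d+2)/2})$ and $O(e^{-\gl t})$ respectively, and the conclusion follows by Minkowski. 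You instead first factor out the Bernoulli randomness via the law of total variance, $\Var[\etaz\Xzo(t)]=p\Var[\Xzo(t)]+p(1-p)\E[\Xzo(t)]^2$, and then split only the branching part as $A_\z+B_\z$, avoiding the limits $W_\z$ entirely and using only the finite-time variance formula \eqref{eq:lemma}. This is a cleaner and more elementary path to \eqref{propvar}. The trade-off is that the paper's three-term decomposition is deliberately set up to be \emph{reused} in the proof of \refT{thm:Zd-limit}: the \as{} convergence argument treats $\Sigma_1,\Sigma_2,\Sigma_3$ separately along a discrete time skeleton, and for $\Sigma_3$ the \iid{} structure of the variables $\etaz W_\z$ is essential for invoking Pruitt's summability theorem. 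Your decomposition would not serve that later purpose without reintroducing $W_\z$. One small point worth making explicit: the identity $\Var[\Ypo(t)]=\sum_\z\Var[\etaz\Xzo(t)]$ presupposes that the series \eqref{jac} converges in $L^2$; this is of course justified a posteriori once your calculation shows the sum of variances is finite (the paper isolates this as Claim~\ref{claim:kolmogorov}).
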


\begin{proof}[Proof of \refP{propY}]\CCreset
\pfitemref{prop:E}
By \eqref{jac}, independence,
translation invariance, and \eqref{eq:malthusian},
using an interchange of order of summation and expectation, that is justified
since all variables are non-negative,
\begin{align}\label{jb}
 \E[\Ypo(t)]\,=\,p\sum_{\z\in\Z^d}\E[X_{\z,\0}(t)]
\,=\,p\sum_{\z\in\Z^d}\E[X_{\0,-\z}(t)]
\,=\,p\E[\|\cX_\0(t)\|]\,=\,pe^{\lambda t},
\end{align}
which yields \eqref{propE}.

\pfitemref{prop:var}
Under the assumption that $\E[\|\varphi\|^2]<\infty$, it follows by
Proposition~\ref{prop:higher} (with $\u=\0$) and \eqref{M0}
that for each $\z\in\Z^d$, 
the process $\{e^{-\lambda t}\|\cXz(t)\|:t\ge0\}$
is an $L^2$-bounded martingale. Hence, the limit 
\begin{align}
  \label{Wz}
W_\z:=\lim_{t\to\infty}e^{-\lambda t}\|\cXz(t)\|
\end{align}
exists almost surely and in $L^2$,
and \eqref{eq:malthusian} implies 
\begin{align}
  \label{EW}
\E [W_\z]=1.
\end{align} 
Note that by \eqref{M0},
\begin{align}  \label{W0}
W_\0 
=\lim_{t\to\infty}M_\0(t)
=\Mx_\0.
\end{align}

We decompose
$\Ypo(t)$ in the following manner, using \eqref{jac} and \eqref{sump}.
\begin{align}\label{eq:decompose}
e^{-\lambda t}\Ypo(t)-p
\,&=\,\sum_{\z\in\Z^d}\etaz e^{-\lambda t}\big(X_{\z,\0}(t)-p_\z(t)\|\cXz(t)\|\big)
\notag\\
&\qquad+\sum_{\z\in\Z^d}p_\z(t)\etaz\big(e^{-\lambda t}\|\cXz(t)\|-W_\z\big)
\notag\\
&\qquad+\sum_{\z\in\Z^d}p_\z(t)\big(\etaz W_\z-p\big).
\end{align}
We will prove below that the sums converge 
in $L^2$, and that their values as elements of $L^2$ are independent of the
order of summation,
so the decomposition is well-defined.
Moreover, although we don't really need this, the proof also shows that for
any given fixed order of summation (given by a fixed 
enumeration of $\Zd$), 
the sums converge \as{}

Denote the three sums on the right-hand side of~\eqref{eq:decompose} by
$\Sigma_1(t)$, $\Sigma_2(t)$ and $\Sigma_3(t)$. 
Note first that
by translation invariance, \eqref{jc} and \eqref{eq:malthusian},
  \begin{align}\label{EXzo}
\E[X_{\z,\0}(t)]
=\E[X_{\0,-\z}(t)]
= e^{\gl t}p_\z(t)
=p_\z(t)\E[\|\cX_\z(t)\|].
\end{align}
Hence, the terms in the first sum have zero mean.
The same holds for the terms in the second and third sums too 
by \eqref{eq:malthusian} and
\eqref{EW}.
It follows that
each of the three sums consists
of independent terms with zero mean;
hence a sufficient (and necessary) condition for the 
existence of the sum, 
in $L^2$ and almost surely (for any fixed order of summation),
is that the sum of the variance of the summands is
finite. We state this well-known standard result formally for easy reference.

\begin{claim}[Kolmogorov]\label{claim:kolmogorov}
Let $\xi_1,\xi_2,\ldots$ be independent zero mean random variables and let
$S_n$ denote the sum of the first $n$ of them. If\/
$\sum_{k=1}^\infty\Var(\xi_k)<\infty$, then
$S_\infty:=\lim_{n\to\infty}S_n$ exists almost surely and in $L^2$, and
\begin{align}
  \label{jd}
\Var\left(S_\infty\right)\,=\,\sum_{k=1}^\infty\Var(\xi_k).
\end{align}
\end{claim}

\begin{proof}[Proof of Claim]
For the existence of the limit, see e.g.~\cite[Lemma~6.5.2 and
Theorem~6.5.2]{gut13}. The formula then follows since
$\E[S_\infty^2]=\lim_{n\to\infty}\E[S_n^2]$. 
\end{proof}

We treat the three terms in \eqref{eq:decompose} separately and in order, 
obtaining estimates of the variance and at the same
time showing the existence of the sums $\gS_j(t)$ in $L^2$ and a.s.

First, Claim~\ref{claim:kolmogorov} and translation invariance show (assuming it is finite)
that
\begin{align}
\EVar{\Sigma_1(t)}
\,&=\,p\,e^{-2\lambda t}\sum_{\z\in\Z^d}
 \E\left[\big(X_{\z,\0}(t)-p_\z(t)\|\cXz(t)\|\big)^2\right].
\notag\\
\,&=\,p\,e^{-2\lambda t}\sum_{\z\in\Z^d}
 \E\left[\big(X_{\0,-\z}(t)-p_\z(t)\|\cXo(t)\|\big)^2\right].
\label{eq:Sigma1}
\end{align}
By Proposition~\ref{prop:key}\ref{prop:key3} the right-hand side is indeed finite,
so $\Sigma_1(t)$ is well-defined and~\eqref{eq:Sigma1} justified. By the
same proposition, we find that, for some $\CC<\infty$,
\begin{align}\label{gS1}
\EVar{\Sigma_1(t)}
= \EEVar{\Sigma_1(t)}
\le \CCx t^{-(d+2)/2}.
\CCdef\CgSi  
\end{align}

Similarly,
Claim~\ref{claim:kolmogorov} yields
\begin{align}
  \label{je}
\EVar{\Sigma_2(t)}
\,=\,p\sum_{\z\in\Z^d}p_\z(t)^2
 \E\left[\big(e^{-\lambda t}\|\cXz(t)\|-W_\z\big)^2\right],
\end{align}
where we note that by translation invariance,
\eqref{M0}, \eqref{W0},
and \refP{prop:higher},
\begin{align}
\E\left[\big(e^{-\lambda t}\|\cXz(t)\|-W_\z\big)^2\right]
= \E\left[\big(e^{-\lambda t}\|\cXo(t)\|-W_\0\big)^2\right]
=  \E\left[(M_\0(t)-\Mx_\0)^2\right]
\le \CC e^{-\gl t}.
\label{jf}
\end{align}
Recalling \eqref{sump}, we see from \eqref{jf} that the sum in \eqref{je}
converges, so $\gS_2(t)$ is well-defined, and furthermore
\begin{align}
  \label{gS2}
\EVar{\Sigma_2(t)}
= \EEVar{\Sigma_2(t)}
\le \CCx e^{-\gl t}.
\end{align}

Finally, 
by Proposition~\ref{prop:higher} the variables $W_\z\eqd W_\0=\Mx_\0$ 
exist in $L^2$, so Claim~\ref{claim:kolmogorov}  gives
\begin{equation}\label{eq:Sigma3}
\EVar{\Sigma_3(t)}
\,=\,\sum_{\z\in\Z^d}p_\z(t)^2\E\big[(\eta_\z W_\z-p)^2\big]
\,=\,\bigpar{p\E[W_\0^2]-p^2}\sum_{\z\in\Z^d}p_\z(t)^2.
\end{equation}
Note that $\E[W_\0^2]\ge \E [W_\0]^2=1$.
Furthermore, equality would imply $W_\0=1$ a.s., and thus the martingale 
$e^{-\lambda t}\|\cXz(t)\|$ would be constant, i.e., 
$\norm{\cXz(t)}=e^{\gl t}$ a.s.\ for every $t\ge0$, which is absurd.
Hence, $\E[W_\0^2]>1$, and thus
${p\E[W_\0^2]-p^2}>0$ for all $p\in(0,1]$.
Proposition~\ref{prop:key}\ref{prop:key2} and \eqref{eq:Sigma3} thus show that 
\begin{align}\label{gS3}
\EVar{\Sigma_3(t)}
= \EEVar{\Sigma_3(t)}
=\Theta(t^{-d/2}),
\qquad t\ge1.  
\end{align}

Examining the variance estimates
\eqref{gS1}, \eqref{gS2} and \eqref{gS3},
we conclude that $\gS_3(t)$ has a variance of larger order than the
other two sums.
We conclude that the variance of $\Sigma_3(t)$ is the dominating term in
the variance of $\Ypo(t)$; more precisely, \eqref{eq:decompose} and
Minkowski's inequality imply, using \eqref{gS1} and \eqref{gS2},
\begin{align}
\Bigabs{\bigpar{ \Var \sqpar{e^{-\lambda t}\Ypo(t)}}\qq 
- \bigpar{\E\sqpar{\gS_3(t)^2}}\qq}
\le
\bigpar{\E\sqpar{\gS_1(t)^2}}\qq
+ \bigpar{\E\sqpar{\gS_2(t)^2}}\qq
= O\bigpar{t^{-(d+2)/4}}.
\end{align}
The first equality in \eqref{propvar} now follows by \eqref{eq:Sigma3} 
and \eqref{gS3} 
(with $C=p\E[W_\0^2]-p^2>0$). 
The second equality follows for large $t$ by
Proposition~\ref{prop:key}\ref{prop:key2}; it trivially extends to all
$t\ge1$
since 
$\Var \sqpar{e^{-\lambda t}\Ypo(t)}$, as a simple consequence of \eqref{jac}, is
bounded below by some positive number for every bounded interval $[1,T]$.
\end{proof}

\begin{remark}
In fact, it is easy to see, e.g.\ as a consequence of
\cite[III.4.(5)]{athney72}, that
$\E[W_\z^2]=1+\E[\norm{\gf}^2]/\gl$, and thus 
$C=p\E[\norm{\gf}^2]/\gl+p-p^2$.
Note also that the bounds \eqref{gS1} and \eqref{gS2} are uniform in $p$.
\end{remark}

\begin{proof}[Proof of Theorem~\ref{thm:Zd-limit}]
The case $p=0$ is trivial, so we assume $p>0$.
By translation invariance, we may assume $\x=0$.
\refP{propY} immediately yields $L^2$ convergence in \eqref{tYlimit}, so
it  remains to establish almost sure convergence.
We shall show, for every fixed $\delta>0$, that
\begin{align}\label{delta-as}
e^{-\lambda\delta n}\Ypo(\delta n)\asto p
\qquad \text{as \ntoo}.
\end{align}
Since
  $\Ypo(t)$ is non-decreasing in $t$, this implies that, \as,
  \begin{align}\label{Fe}
e^{-\lambda\delta}p
\le\liminf_{t\to\infty}e^{-\lambda t}\Ypo(t)\le\limsup_{t\to\infty}e^{-\lambda t}\Ypo(t)
\le e^{\lambda\delta}p.    
  \end{align}
Hence, \as{} \eqref{Fe} holds for
all rational $\delta>0$, which implies 
$\lim_{t\to\infty}e^{-\lambda t}\Ypo(t)=p$.

Thus, fix $\gd>0$.
In order to show \eqref{delta-as}, we again use the decomposition
\eqref{eq:decompose} and show that
$\gS_1(\gd n)\asto0$,
$\gS_2(\gd n)\asto0$,
and $\gS_3(\gd n)\asto0$ as \ntoo.

First, \eqref{gS1} shows that
\begin{align}\label{gs1sum}
\E\bigg[\sum_{n=1}^\infty\Sigma_1(\delta
  n)^2\bigg]=\sum_{n=1}^\infty\E\big[\Sigma_1(\delta n)^2\big]\le
\CgSi\sum_{n=1}^\infty(\delta n)^{-(d+2)/2}
<\infty.
\end{align}
In particular, \as, $\sum_{n=1}^\infty\Sigma_1(\delta n)^2<\infty$
and thus $\lim_{n\to\infty}\Sigma_1(\delta n)=0$. 

Similarly, \eqref{gS2} implies that \as{}
$\lim_{n\to\infty}\Sigma_2(\delta n)=0$. 

To complete the proof of the theorem, it remains to show that also \as{}
$\Sigma_3(\delta n)\to0$ as $n\to\infty$. For
$d\ge3$ this follows from \eqref{gS3}, under the assumption that $\E[\|\varphi\|^2]<\infty$, 
just like for $\Sigma_1(t)$ and $\Sigma_2(t)$. 
For $d=1,2$ we need to argue differently, which requires a stronger
moment condition.

We will appeal to a theorem of Pruitt~\cite{pruitt66}. Note that
$\Sigma_3(\delta n)$ is of the form $\sum_{k\ge1}a_{n,k}X_k$, where the
random variables $X_k:=\eta_{\z_k}W_{\z_k}-p$ 
(for some arbitrary enumeration $(\z_k)_k$ of $\Z^d$)
are i.i.d.\ with mean zero and the coefficients 
$a_{n,k}:=p_{\z_k}(\delta n)$ 
are non-negative and satisfy, by Proposition~\ref{prop:key}\ref{prop:key1},
\begin{equation}
\sum_{k\ge1}a_{n,k}=1\quad\text{and}\quad\max_{k\ge1}a_{n,k}=O(n^{-d/2}).
\end{equation}
Let $r=\max\{1+2/d,2\}$, which for $d=1$ gives $r=3$ and for $d\ge2$ gives $r=2$.
Note that the assumption $\E[\|\varphi\|^r]<\infty$
implies $\E[W_\z^r]<\infty$, and hence that
$\E[|X_k|^r]<\infty$; see
\cite[Theorems 1 and 3]{bindon75}.\footnote{Alternatively, see 
\cite[Corollary to Theorem 5]{bindon74}, 
applied to the Galton--Watson process $\norm{\cX_\z(n)}$,
and 
\cite[Corollary~III.6.1]{athney72}.} 
Consequently, if $\E[\|\varphi\|^r]<\infty$, then \cite[Theorem~2]{pruitt66} gives that \as{} $\lim_{n\to\infty}\Sigma_3(\delta n)=0$.

We have shown that each of the three sums $\gS_j(t)$ on the right-hand side
of~\eqref{eq:decompose} \as{} tends to 0 for $t=n\gd\to\infty$,
which as said above yields \eqref{delta-as} and
completes the proof of Theorem~\ref{thm:Zd-limit}. 
\end{proof}

\begin{remark}\label{prutt}
The work of Pruitt~\cite{pruitt66} was brought to our attention by Luca Avena and Conrado da Costa. Our previous proof (for the cases $d=1,2$) was based on Rosenthal's inequality (see \cite[Theorem~3.9.1]{gut13}) and required the stronger conditions $\E[\|\varphi\|^{4+\eps}]<\infty$ when $d=1$ and $\E[\|\varphi\|^{2+\eps}]<\infty$ when $d=2$, for some $\eps>0$.
\end{remark}

\section{The two-type annihilating process}
\label{sec:existence}

We now, finally, turn to the two-colour competition process. Foremost, we shall prove that the process is well-defined in the generality that it is studied in this paper, and along the way establish some properties that we will need for the proof of our main theorem.

For monochromatic initial configurations the process is, as already mentioned, well-defined as there are simply no interactions between different balls. For non-monochromatic initial configurations consisting of finitely many non-zero elements it is straightforward to construct the annihilating process, since \as{} only finitely many nucleation events occur in finite time, and no two balls nucleate simultaneously, so annihilations can be carried out in chronological order. For similar reasons the process is well-defined for initial configurations in which at least one of the colours appear in finite numbers; we refer to such configurations as \emph{quasi-monochromatic}. Also in this setting there are \as{} at most finitely many nucleation events occurring in finite time that may result in an annihilation of balls, and the annihilations can thus be carried out as before.

It is, however, less obvious that for arbitrary initial configurations the
process exists as we have described it. Since there is no `first' event of
annihilation, an attempt to determine whether a potential annihilation takes
place or not could (in principle) result in the tracing of an infinite
sequence of 
potential annihilations backwards in time. This
\emph{should} not be the case. However, in order to avoid this problem, we
shall take a limiting approach where we define the annihilating process for
a general initial configuration as a limit of the process for a sequence of
finite initial configurations. In order to do so properly, we 
shall need to detail further how the process is constructed. Throughout this
section we shall limit our attention to initial configuration in
$\{-1,0,1\}^{\Z^d}$. 
For configurations,
we use the product order on $\bbZ^{\bbZ^d}$, and write thus
$\bgz\le\bgz'$ for configurations $\bgz=(\gz_\bx)_\bx$ and $\bgz'=(\gz'_\bx)_\bx$
if and only if $\gz_\bx\le \gz'_\bx$ for every $\bx\in\bbZ^d$.

For the reader who prefers to postpone the details of this section and
proceed to the proof of our main theorem, we remark that Lemma~\ref{Ltory}
below will be used to prove fixation for $p\neq1/2$, and
Lemmas~\ref{Lexp}--\ref{Llim} will be used in the proof of non-fixation at
$p=1/2$. In addition, these lemmas are used in this section to justify our
definition of the annihilating process for arbitrary initial configurations.
Some of the lemmas will be proven first for the \qmc{} case, and in the
present section used only for that case; at the end of the section we
extend the proofs to the general case.

\subsection{A technical digression on the construction of the process}\label{SStechnical}

For the remainder of this paper we make (without loss of generality)
the following assumptions.
We label each ball (regardless of its colour)
by a finite string $(\bz,i_1,i_2,\dots,i_m)$ with
$\z\in\Zd$, $m\ge0$ and
$i_j\in\bbN$, such that
the ball initially at $\bz$ (if any) is labelled by $(\bz)$, and if a ball 
has label $(\bz,i_1,i_2,\dots,i_m)$, then its children are labelled by
$(\bz,i_1,i_2,\dots,i_m,i)$ for $i=1,2,\dots$ (in some fixed order).
This gives each ball a unique label.
Furthermore, we assume that we have a Poisson clock for each possible label;
these clocks are independent of each other and of the initial configuration.
Moreover, each clock is equipped with one realization of the random
offspring configuration $\gf$ for each ring of the clock.
We now define the process with each ball using the corresponding clock and
the copies of $\gf$ provided by that clock.
(Ticks and tocks of unused clocks are ignored.) Furthermore, when a ball annihilates another,
and there are several balls at that site that may be chosen for
annihilation, we chose 
the one that comes first 
according to some fixed rule, for example the ball
with smallest label in lexicographic order (using an arbitrary but fixed
order on $\bbZ^d$). 
Note that all randomness in the process $\cZ(t)$ now lies in the clocks and the
initial configuration; $\cZ(t)$ is a deterministic function of these. Moreover, all clocks may be assumed to start ticking at the dawn of time, and are thus completely independent of the initial configuration.

At occasions we will want to emphasise or compare different initial
configurations, and thus write $\cZ(t,\bgz)$ for the state at time $t\ge0$
of the process with initial configuration $\bgz$. Since clocks are
independent of the presence and colour of the balls in the initial
configuration, this yields a coupling $\{\cZ(t,\bgz)\}$ of the processes
for all possible initial configurations. Due to the independence between the
clocks and the initial configuration, we shall throughout this section
consider deterministic initial configurations; analogous statements for
random initial configurations are obtained through conditioning.


\subsection{A conservative version of the annihilating system}
\label{sec:coupling}

We introduce a conservative version of the process,
previously explored in~\cite{ahlgrijanmor19}. 
In this process, 
red and blue balls branch and get offspring as in the competition
process described above,
but when a red and a blue ball meet, instead of annihilating, the two balls
merge to form a purple ball. Each purple ball in the system continues to
branch independently and according to the same rule as red and blue. (For definiteness, purple balls inherit the label (and thus the clock) of the older of the two balls involved in the merging.)
Purple balls, however, do not interact with other balls. 
Consequently, we recover the competition process by ignoring all purple balls.

For quasi-monochromatic initial configurations the conservative process is well-defined
for the same reasons the annihilating process is well-defined. Let
$R_\bx(t)$,
$B_\x(t)$, and $P_\x(t)$
be the numbers of red, blue, and purple balls, respectively,
at site $\x\in\Zd$ at time $t$ in the above conservative process,
and let
$\cR(t):=(R_\bx(t))_{\bx\in\bbZ^d}$,
$\cB(t):=(B_\bx(t))_{\bx\in\bbZ^d}$,
and $\cP(t):=(P_\bx(t))_{\bx\in\bbZ^d}$ 
be the corresponding vectors.
Then the competition process is given by $\cZ(t)=\cR(t)-\cB(t)$. 
Furthermore, we use the standard notation
$x_+:=\max(x,0)$ and $x_-:=\max(-x,0)$ for real $x$, 
and extend this component-wise to vectors $\bgz=(\zeta_\x)_{\x\in\Zd}$.
Then, in particular, $\cR(t)=\cZ(t)_+$ and $\cB(t)=\cZ(t)_-$.

The crucial facts about this conservative process are stated in the following
lemma. The lemma will in the coming sections allow us to apply the previous results for the monochromatic process in order to prove Theorem~\ref{thm:fixnonfix}.

\begin{lemma}\label{Ltory}
The conservative process is well-defined for any initial configuration
$\bgz$.
 The process
$\cR(t)+\cP(t)$ is an instance of the monochromatic process
started with $\bgz_+$,
and 
$\cB(t)+\cP(t)$ is an instance of the monochromatic process started
with $\bgz_-$.  
Furthermore, for all $t\ge0$,
\begin{align}\label{ja}
  \cZ(t,\bgz)=\cR(t)-\cB(t)
=\bigsqpar{\cR(t)+\cP(t)} - \bigsqpar{\cB(t)+\cP(t)}.
\end{align}
\end{lemma}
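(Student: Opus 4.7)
The plan is to prove the lemma first for quasi-monochromatic $\bgz$, where the conservative process has already been constructed directly in the preceding discussion, and then to extend to arbitrary $\bgz\in\{-1,0,1\}^{\Zd}$ via monotone approximation. In the quasi-monochromatic case the key is an event-by-event analysis of the effect of each branching on $\cR+\cP$. A branching of a red or purple ball that places $k$ offspring at some $\y$ increases $\cR_\y+\cP_\y$ by exactly $k$ regardless of mergers, since each merger of a new red with a pre-existing blue produces a purple, which still counts toward $\cR+\cP$. A branching of a blue ball that places $k$ offspring at $\y$ leaves $\cR_\y+\cP_\y$ unchanged: any merger with an existing red converts a red into a purple, with net change zero, while the remaining new blues contribute only to $\cB$. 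Because each ball in the process carries its own independent rate-$1$ Poisson clock equipped with independent copies of $\gf$, this event analysis identifies $\cR(t)+\cP(t)$ as a continuous-time Markov branching process on $\Zd$ with offspring distribution $\Phi$ and initial configuration $\bgz_+$, i.e., the monochromatic process of Section~\ref{sec:monochromatic} started from $\bgz_+$. The argument is symmetric for $\cB(t)+\cP(t)$ and $\bgz_-$.

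For a general $\bgz$, let $\bgz^{(n)}$ denote its truncation to the box $B(\0,n)$. Each $\bgz^{(n)}$ is finite, so the conservative process $(\cR^{(n)},\cB^{(n)},\cP^{(n)})$ is well-defined, and $\cR^{(n)}+\cP^{(n)}$ and $\cB^{(n)}+\cP^{(n)}$ are distributed as the monochromatic processes from $\bgz_+^{(n)}$ and $\bgz_-^{(n)}$ respectively. The monochromatic processes from $\bgz_+$ and $\bgz_-$ are well-defined and a.s.\ finite at every site-time pair by Remark~\ref{Rfinite}. Using the coupled-clocks construction of Section~\ref{SStechnical} together with the ``red-inherits-on-merger'' variant of the label-inheritance rule, $\cR^{(n)}+\cP^{(n)}$ becomes almost surely equal to the restriction to $\bgz_+^{(n)}$ of a single coupled monochromatic process from $\bgz_+$, which is pointwise non-decreasing in $n$ with limit the full monochromatic process from $\bgz_+$; the symmetric coupling handles $\cB+\cP$. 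Since reds and blues cannot coexist at a site in the conservative process (they would merge instantly), $\cP^{(n)}_\x=\min(\cR^{(n)}_\x+\cP^{(n)}_\x,\,\cB^{(n)}_\x+\cP^{(n)}_\x)$, so $\cR^{(n)}$, $\cB^{(n)}$, $\cP^{(n)}$ individually converge and their limits define the conservative process for $\bgz$ satisfying the first two claims. The identity \eqref{ja} is then immediate: $\cR-\cB=(\cR+\cP)-(\cB+\cP)$ is tautological, and $\cZ(t,\bgz^{(n)})=\cR^{(n)}(t)-\cB^{(n)}(t)$ holds at each finite truncation (by ignoring purples, which recovers the annihilating dynamics) and passes to the limit.

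The principal obstacle is the Markov identification of $\cR+\cP$ in the quasi-monochromatic step, and in particular reconciling it with the ``older-inherits'' label rule: after a merger the clock carried by the purple is not necessarily the one the monochromatic coupling would use, but the memorylessness of the Poisson clocks together with the i.i.d.\ structure of the offspring copies makes this swap distributionally irrelevant, which is precisely what lets the Markov branching-process identification go through.
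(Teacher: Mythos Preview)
Your quasi-monochromatic argument is correct and fleshes out what the paper declares ``immediate from the definitions''; the closing remark about memorylessness correctly captures that ``an instance of'' is a distributional statement, not a pointwise one under the paper's older-inherits rule.

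The general case has a genuine gap. To make $\cR^{(n)}+\cP^{(n)}$ pointwise equal to the coupled monochromatic process from $\bgz_+^{(n)}$ (and hence monotone in $n$) you switch to a red-inherits label rule; you then say ``the symmetric coupling handles $\cB+\cP$'', meaning blue-inherits. But a purple ball inherits exactly one label, so these two rules cannot be imposed in the \emph{same} conservative process. Under red-inherits, $\cB^{(n)}+\cP^{(n)}$ is \emph{not} pointwise the monochromatic process from $\bgz_-^{(n)}$ (its purples carry red-origin clocks), and you give no reason it should be monotone in $n$. Your identity $\cP^{(n)}=\min(\cR^{(n)}+\cP^{(n)},\cB^{(n)}+\cP^{(n)})$ is valid only within a single process, so it cannot be used to conclude convergence of $\cP^{(n)}$ when only one of the two arguments is known to converge in that process.

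The paper avoids this by a different route: it first establishes the stabilization machinery (Lemmas~\ref{Llim}--\ref{Ldef}) showing that for large $r$ the annihilations in $\cZ(t,\bgz^{+,r})$ agree, on any finite box and time horizon, with those of the limit; the purple (and hence conservative) process is then \emph{defined} as the limit of the purple processes for the quasi-monochromatic $\bgz^{+,r}$, and the claims transfer directly from the already-proved quasi-monochromatic case. Your argument could be repaired by first invoking $\cZ(t,\bgz^{(n)})\to\cZ(t,\bgz)$ (which you use anyway for \eqref{ja}) to get a.s.\ convergence of $\cR^{(n)}$ and $\cB^{(n)}$ separately, then combining with the red-inherits convergence of $\cR^{(n)}+\cP^{(n)}$ to get $\cP^{(n)}$; but you would still owe a separate argument that the resulting limit $\cB+\cP$ has the monochromatic dynamics from $\bgz_-$.
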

\begin{proof}[Proof when $\bgz$ is \qmc]
  Immediate from the definitions.
\end{proof}

As said above, the general case will be treated at the end of the section.
Note that the two monochromatic processes 
$\cR(t)+\cP(t)$ and $\cB(t)+\cP(t)$ 
are \emph{not} independent and 
that they are equal to the processes $\cZ(t,\bgz_+)$ and $\cZ(t,\bgz_-)$ in
distribution, but not necessarily point-wise.

\subsection{Comparison of initial configurations}

We next state two lemmas that will help to compare versions of the process
with different initial configurations; 
the lemmas are proved for \qmc{}
initial configurations in the present subsection, and in genral at the end
of the section.

We first note that the expected
configuration at a given time is a linear function of the initial
configuration, and state this formally for the number of balls at the
origin. 

\begin{lemma}\label{Lexp}
  For any (deterministic)  initial configuration 
$\bgz$ we have
  \begin{align}\label{qaa}
    \E [\Zo(t,\bgz)]= e^{\gl t} \sum_{\bz\in\bbZ^d}p_\bz(t)\gz_\bz.
  \end{align}
\end{lemma}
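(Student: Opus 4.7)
The natural approach is to reduce to the monochromatic setting via the conservative coupling of Lemma~\ref{Ltory}, where the absence of interactions makes expectations trivially linear in the initial configuration.

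First I would prove the statement for quasi-monochromatic $\bgz$, using the construction already available at this point. By Lemma~\ref{Ltory}, we may write $Z_\0(t,\bgz) = [R_\0(t) + P_\0(t)] - [B_\0(t) + P_\0(t)]$, where the two bracketed processes are distributed as monochromatic evolutions started from $\bgz_+$ and $\bgz_-$, respectively. Using the representation~\eqref{jac}, the value at the origin of a monochromatic process started from any deterministic $\eta \in \{0,1\}^{\Zd}$ decomposes as $\sum_\bz \eta_\bz X_{\bz,\0}(t)$, with the $(\cXz)_{\bz \in \Zd}$ independent. Interchanging sum and expectation (justified by non-negativity) and applying \eqref{EXzo} yields
\begin{equation*}
\E\Bigl[\sum_\bz \eta_\bz X_{\bz,\0}(t)\Bigr] \,=\, \sum_\bz \eta_\bz \E[X_{\bz,\0}(t)] \,=\, e^{\lambda t} \sum_\bz \eta_\bz p_\bz(t).
\end{equation*}
Applying this identity to $\eta = \bgz_+$ and $\eta = \bgz_-$ and subtracting, while using $(\gz_\bz)_+ - (\gz_\bz)_- = \gz_\bz$, gives~\eqref{qaa}. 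The sum converges absolutely since $|\gz_\bz| \le 1$ and $\sum_\bz p_\bz(t) = 1$ by~\eqref{sump}.

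For a general $\bgz \in \{-1,0,1\}^{\Zd}$, I would approximate by the truncations $\bgz^{(n)} := \bgz \cdot \indic{B(\0,n)}$, which are finite (hence quasi-monochromatic). The right-hand side of~\eqref{qaa} for $\bgz^{(n)}$ converges to that for $\bgz$ by dominated convergence. To pass the limit through the expectation on the left, I would appeal to the forthcoming limiting construction at the end of the section, in which $\cZ(t,\bgz)$ is defined as a suitable almost sure limit of $\cZ(t,\bgz^{(n)})$; uniform integrability is controlled via the conservative coupling, since $|Z_\0(t,\bgz^{(n)})|$ is bounded by the sum of the origin values of the monochromatic processes started from $\bgz_+^{(n)}$ and $\bgz_-^{(n)}$, which in turn are monotonically dominated by those started from the full $\bgz_\pm$, having finite expectation by the quasi-monochromatic case applied to $|\bgz|$.

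The main obstacle is the general case: both sides of~\eqref{qaa} are transparently linear for \qmc{} inputs, but extending to arbitrary $\bgz$ requires the careful limiting definition of the annihilating process deferred to the end of the section, together with monotone/dominated arguments to exchange the limit and expectation. Once that formal machinery is in place, the passage to the limit is routine.
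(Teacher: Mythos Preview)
Your quasi-monochromatic argument is essentially identical to the paper's: decompose via \refL{Ltory} into the difference of two monochromatic processes, compute each expectation using \eqref{EXzo}, and subtract.

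For the general case, your approach differs from the paper's. The paper does not truncate and pass to the limit in \eqref{qaa} directly; instead, in the Epilogue it first extends \refL{Ltory} itself to arbitrary $\bgz$ (via the limiting construction of the purple process), and then simply observes that the \qmc{} proof of \refL{Lexp} applies verbatim, since the decomposition $\Zo(t,\bgz)=[R_\0(t)+P_\0(t)]-[B_\0(t)+P_\0(t)]$ and the identification of each bracket with a monochromatic process now hold for all $\bgz$. Your route via truncations and dominated convergence is also valid and is arguably more self-contained, since it uses only the \qmc{} version of \refL{Ltory} together with the limiting definition \eqref{lim}. The paper's route is shorter once the general \refL{Ltory} is in hand.

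One point to tighten: your domination step asserts that the conservative coupling's $R_\0+P_\0$ and $B_\0+P_\0$ for $\bgz^{(n)}$ are pointwise monotonically dominated by the monochromatic processes from $\bgz_\pm$. The paper explicitly warns that $\cR(t)+\cP(t)$ equals $\cZ(t,\bgz_+)$ only in distribution, not pointwise, so this phrasing is ambiguous. The clean fix is to bound $|Z_\0(t,\bgz^{(n)})|\le R_\0+B_\0$ and note that every red (resp.\ blue) ball present in the annihilating process is, with the same label and position, present in the literal monochromatic process $\cZ(t,\bgz_+^{(n)})$ (resp.\ $\cZ(t,\bgz_-^{(n)})$) run with the same clocks; these in turn are pointwise increasing in the initial data, giving the single integrable dominating variable $\cZ_\0(t,\bgz_+)+\cZ_\0(t,\bgz_-)\le \cZ_\0(t,\mathbf{1})$ with expectation $e^{\lambda t}$.
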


\begin{proof}[Proof when $\bgz$ is \qmc]
  First, consider the monochromatic case; say $\bgz\ge\bo$, so all balls are
  red. Then all balls evolve independently,
so $\Zo(t,\bgz)=\sum_{\bz\in\bbZ^d} X_{\bz,\bo}(t) \gz_\bz$, \cf{} \eqref{jac},
  and \eqref{qaa} follows by
  linearity (since all terms are non-negative) and \eqref{EXzo}.

  In general, we introduce purple balls and 
use \refL{Ltory}.
Then, by \eqref{ja} and using \eqref{qaa} for each of the monochromatic processes
 $\cR(t)+\cP(t)$ and $\cB(t)+\cP(t)$,
  \begin{align}
 \E [\Zo(t,\bgz)] = \E\sqpar{R_\0(t)+P_\0(t)} - \E\sqpar{B_\0(t)+P_\0(t)}
    =e^{\gl t}\sum_{\bz:\gz_\bz=1}p_\bz(t) - e^{\gl t}\sum_{\bz:\gz_\bz=-1}p_\bz(t),
  \end{align}
 which is well-defined since both sums are finite, and \eqref{qaa} follows.
\end{proof}

Our next lemma states that
the process is monotone in the initial configuration.
Recall that each ball is represented by a unique label of the form
$(\z,i_1,i_2,\ldots,i_m)$. 
We may identify balls and their labels when convenient; thus when we talk about 
a given ball in the competition process, we mean a ball with a given label.
We refer to a label as \emph{active} at time $t$
if the corresponding ball exists in the process $\cZ(t,\bgz)$. Let
$\ActR_\z(t,\bgz)$ denote the set of labels corresponding to red balls at
$\z$ that are active at time $t$, and let $\ActB_\z(t,\bgz)$ denote ditto
for labels corresponding to blue balls.

\begin{lemma}\label{Lmon}
Let $\bgz$ and $\bgz'$ be two  configurations with $\bgz\le\bgz'$.
Then, \as{} for all $t\ge0$ and $\z\in\Z^d$ we have
\begin{equation}\label{active}
\ActR_\z(t,\bgz)\subseteq\ActR_\z(t,\bgz')\quad\text{and}\quad\ActB_\z(t,\bgz')\subseteq\ActB_\z(t,\bgz).
\end{equation}
In particular, \as{}  $\cZ(t,\bgz)\le\cZ(t,\bgz')$ for all $t\ge0$.
\end{lemma}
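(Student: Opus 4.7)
The plan is to prove \refL{Lmon} first for quasi-monochromatic initial configurations by induction on the events of the coupled processes, following the construction of \refSS{SStechnical}; the general case will be addressed at the end of the section. In this coupling $\cZ(\cdot,\bgz)$ and $\cZ(\cdot,\bgz')$ share the same labels, clocks, offspring realisations, and smallest-label annihilation tie-breaking rule, so they differ only through the initial colourings. Since $\bgz$ and $\bgz'$ are quasi-monochromatic, \as{} only finitely many clock rings at active balls occur within any bounded spatiotemporal window, and distinct events happen at distinct times, so I may enumerate the events chronologically as $\tau_1<\tau_2<\dots$.

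The base case $t=0$ follows from $\gz_\z\le\gz'_\z$ with values in $\{-1,0,1\}$: $\gz_\z=1$ forces $\gz'_\z=1$ (so the initial red label $(\z)$ belongs to $\ActR_\z(0,\bgz')$), and $\gz'_\z=-1$ forces $\gz_\z=-1$ (so the initial blue label belongs to $\ActB_\z(0,\bgz)$). For the inductive step, assuming \eqref{active} at time $\tau_k^-$, I consider the event at $\tau_k$ in which the clock of some label $\ell$ rooted at $\z_0:=\z_0(\ell)$ rings and produces offspring according to $\gf_\ell$. The effect on each process is determined by the pair $(\gz_{\z_0},\gz'_{\z_0})$ (one of six values compatible with $\gz_{\z_0}\le\gz'_{\z_0}$) and by the active status of $\ell$ at $\tau_k^-$; a finite case analysis will verify that the nucleations and induced annihilations preserve \eqref{active}.

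The central sub-verification concerns annihilations. Suppose new red offspring arrive at a site $\y$ in both processes with nonempty pre-event blue label sets $B:=\ActB_\y(\tau_k^-,\bgz)\supseteq B':=\ActB_\y(\tau_k^-,\bgz')$. The rule removes $\beta:=\min B$ from $\bgz$ and $\beta':=\min B'$ from $\bgz'$; since $B'\subseteq B$ we have $\beta\le\beta'$, and either $\beta=\beta'\in B'$ or $\beta<\beta'$ with $\beta\notin B'$, so a direct check gives $B'\setminus\{\beta'\}\subseteq B\setminus\{\beta\}$, preserving the blue-side inclusion. A symmetric argument handles new blue offspring. Cases where $\ell$ is active in only one process add new offspring only to that process, and the urn constraint (at most one colour per site) makes the resulting bookkeeping consistent with \eqref{active}.

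The principal obstacle is the mixed-colour case $(\gz_{\z_0},\gz'_{\z_0})=(-1,1)$ with $\ell$ simultaneously active as blue in $\bgz$ and red in $\bgz'$: here a single clock ring produces blue offspring in $\bgz$ and red offspring in $\bgz'$ at the same target sites. A subcase analysis at each target site, again guided by the urn constraint, reconciles these simultaneous opposite-coloured arrivals with the inductive inclusions: adding red to $\bgz'$ can only enlarge $\ActR_\y(\bgz')$ or shrink $\ActB_\y(\bgz')$, and symmetrically for blue in $\bgz$. The concluding pointwise inequality $\cZ(t,\bgz)\le\cZ(t,\bgz')$ is then immediate from \eqref{active}, since $\cZ_\z=|\ActR_\z|-|\ActB_\z|$ gives
\begin{align*}
\cZ_\z(t,\bgz')-\cZ_\z(t,\bgz) = \bigl[|\ActR_\z(t,\bgz')|-|\ActR_\z(t,\bgz)|\bigr] + \bigl[|\ActB_\z(t,\bgz)|-|\ActB_\z(t,\bgz')|\bigr] \ge 0.
\end{align*}
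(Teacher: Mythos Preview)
Your approach matches the paper's: induct over nucleation events and verify that the label-set inclusions \eqref{active} are preserved at each step, treating first the quasi-monochromatic case and deferring the general case. Your case analysis, including the mixed-colour case $(\gz_{\z_0},\gz'_{\z_0})=(-1,1)$, correctly fleshes out what the paper leaves as ``easily verified''.

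There is, however, a gap in the induction setup. You justify a global chronological enumeration $\tau_1<\tau_2<\dots$ by appealing to finiteness in a ``bounded spatiotemporal window'', but for an infinite quasi-monochromatic configuration there are infinitely many nucleations in $[0,T]$ and no first one, so the induction as written does not get started. The paper avoids this by first treating \emph{finite} configurations (where the enumeration is valid) and then observing that in the quasi-monochromatic case only finitely many nucleations in $[0,T]$ can result in an annihilation; the remaining nucleations preserve the invariant without needing to be enumerated. You should either adopt this two-step structure or make the induction explicitly local to a fixed site. A smaller point: your annihilation check removes only the single label $\min B$, but $\gf$ may place $k>1$ offspring at the same target site $\y$; the needed extension (remove the $k$ smallest, noting that any $x\in B'$ with at least $k$ smaller elements in $B'\subseteq B$ also has at least $k$ smaller elements in $B$) is routine but should be stated.
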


\begin{proof}[Proof when $\bgz$ is \qmc]
The inequality $\bgz\le\bgz'$ means that
  every red ball in $\bgz$ exists (with the same label and colour) 
also in   ${\bgz'}$,
  and every blue ball in $\bgz'$ exists also in $\bgz$.
  (There may also be further
  red balls in $\bgz'$ and blue balls in ${\bgz}$.)
  We claim that this holds at all later times $t\ge0$ too, and thus
  that~\eqref{active} holds. 
  In fact, since balls with the same label obey the same clock,
  the property \eqref{active}
is preserved at each nucleation (including accompanying
  annihilations, since they follow a fixed order), 
as is easily verified. (If
  a nucleation results in a red ball in the process starting from $\bgz$,
  then the nucleation will result in a red ball with the same label
  also in the process started from $\bgz'$.)
  If $\bgz$ and $\bgz'$ are finite, the result now follows by induction over
  the number of nucleations, which then is finite in every finite interval.
  
  In the general \qmc{} case, with initial configurations that may be
  infinite, the conclusion will follow just the same, since there are at
  most finitely many nucleations that may result in an annihilation in
  finite time intervals. To see this, note that the process obtained by
  suppressing all annihilations is simply the monochromatic process (the
  process in which each entry $\zeta_\z$ of $\bgz$ is replaced by
  $|\zeta_\z|$). In this process each ball initially present will result in
  at most finitely many descendants in finite time (\cf{} 
  \refR{Rfinite}). In particular, in each of 
 $\cZ(t,\bgz)$ and $\cZ(t,\bgz')$ there are at
  most finitely many balls born with one of the colours in every finite time window,
  and thus at most a finite number of potential annihilations.
\end{proof}

\subsection{General initial configurations}

We next show that  the competition process 
with arbitrary initial configurations can be defined as the limit of
the process for
finite or quasi-monochromatic configurations; we shall also see that this 
limit
indeed satisfies the verbal description of the annihilating process given
above.

We let $|\cdot|$ denote the $\ell^\infty$-norm on $\R^d$ and set
\begin{equation}
B(\org,r):=[-r,r]^d=\{\x:|\x|\le r\},\quad\text{for }r\ge0.
\end{equation}
Given a configuration $\bgz$ and an integer $r\ge0$, 
let $\QLE{\bgz}{r}$ 
denote the restriction of $\bgz$ to $B(\0,r)$,
i.e., $\QLE{\gz}{r}_\bz:=\gz_\bz\cdot\indic{|\bz|\le r}$.
We define also the modifications
$\bgz\ppr,\bgz\mmr$; these are  equal to $\bgz$ in 
$B(\0,r)$, but for $\x\notin B(\0.r)$ we set
\begin{align}
\gz\ppr_\x:=1\quad\text{and}\quad 
\gz\mmr_\x:=-1. & 
\end{align}
This means that outside $B(\0,r)$ we put one ball at each site, 
red  in $\bgz\ppr$ and blue in $\bgz\mmr$. 
Note that $\QLE{\bgz}r$ is finite and that
$\bgz\ppr$ and $\bgz\mmr$ are quasi-monochromatic.

The inequalities
\begin{align}
  \label{penta}
\cZ(t,\bgz\mmr)
\le\cZ(t,\QLE{\bgz}{r})
\le\cZ(t,\bgz\ppr)
\end{align}
hold for $t=0$ 
by definition,
and \as{} for every $t\ge0$ by \refL{Lmon}.
Similarly we have
\begin{align}\label{quarto}
  \cZ(t,\bgz\mmr)\le \cZ(t,\bgz^{-,r+1}) \quad\text{and}\quad
  \cZ(t,\bgz\ppr)\ge \cZ(t,\bgz^{+,r+1})
\end{align}
for every $t\ge0$.

\begin{lemma}\label{Llim}
For any initial configuration $\bgz\in\{-1,0,1\}^{\Z^d}$, $\z\in\Z^d$ and $T<\infty$ there exists \as{} a (random) $L<\infty$ such that for all $r\ge L$ and $t\in[0,T]$
\begin{equation}\label{acteq}
\ActR_\z(t,\bgz\ppr)=\ActR_\z(t,\bgz\mmr)\quad\text{and}\quad\ActB_\z(t,\bgz\ppr)=\ActB_\z(t,\bgz\mmr),
\end{equation}
so that, in particular, 
\begin{align}
  \label{zeq}
Z_\z(t,\bgz\ppr)=Z_\z(t,\bgz\mmr)
\end{align}
for all $t\in[0,T]$ and $r\ge L$.
\end{lemma}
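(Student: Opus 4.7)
The approach is to show that a.s., for all sufficiently large $r$, no ball whose initial ancestor lies outside $B(\0,r)$ can influence the label configuration at $\z$ during $[0,T]$; since $\bgz\ppr$ and $\bgz\mmr$ coincide on $B(\0,r)$ and differ only in the colour assigned to the initial balls outside, this will force \eqref{acteq} and, in particular, \eqref{zeq}.

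To make this precise, I would consider the monochromatic process $\cY$ started from $\etta$ on $\Zd$ (well-defined by \refR{Rfinite}, with $\E[Y_\z(T)]=e^{\lambda T}$ by \refL{Lexp}). Let $\cL_\z$ be the (random) set of labels that occupy $\z$ at some time in $[0,T]$ in $\cY$, let $\cA_\z$ be $\cL_\z$ together with all of its branching-tree ancestors, and let $I_\z$ and $K_\z$ denote the sets of initial positions and of all positions, respectively, occupied by labels in $\cA_\z$. Since $\E|\cL_\z|\le e^{\lambda T}$ and every label has finite ancestry, $I_\z$ and $K_\z$ are a.s.\ finite. Let also $\widehat{\cY}^{\,r}$ be the monochromatic process started from $\ind_{|\bx|>r}$; it uses only the outside clocks and is therefore independent of $K_\z$ whenever $I_\z\subseteq B(\0,r)$.

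The key step is to verify \eqref{acteq} on the event
\[
F_r \,:=\, \{I_\z\subseteq B(\0,r)\} \cap \bigl\{\widehat{Y}^{\,r}_\y(T)=0\text{ for every }\y\in K_\z\cup\{\z\}\bigr\}.
\]
On $F_r$, every label at $\z$ in either competition process during $[0,T]$ has its initial ancestor inside $B(\0,r)$ (so carries the same colour in both $\bgz\ppr$ and $\bgz\mmr$), and no outside-descending ball visits $K_\z\cup\{\z\}$ during $[0,T]$, because outside-descending labels in the competition processes are obtained from those in the annihilation-free $\widehat{\cY}^{\,r}$ only by removing some. Consequently, every annihilation occurring at a site of $K_\z$ during $[0,T]$ involves only inside labels, which carry matching labels, clocks and initial colours in the two processes; hence the labels in $\cA_\z$ (and in particular those alive at $\z$) are born and annihilated at identical times in $\bgz\ppr$ and $\bgz\mmr$.

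To finish, I would argue that $F_r$ holds for all $r\ge L$ for some a.s.\ finite random $L$. The first component of $F_r$ holds once $r$ exceeds the a.s.\ finite $\max_{\bx\in I_\z}|\bx|$. For the second, conditioning on the inside clocks (which determine $K_\z$ as soon as $I_\z\subseteq B(\0,r)$) and using independence from the outside clocks together with Markov's inequality and \refL{Lexp} applied to $\widehat{\cY}^{\,r}$ gives
\[
\Pr\bigl(\widehat{Y}^{\,r}_\y(T)>0\bigm|\text{inside clocks}\bigr)\,\le\,\E[\widehat{Y}^{\,r}_\y(T)]\,=\,e^{\lambda T}\sum_{|\bx|>r}p_{\bx-\y}(T),
\]
which tends to $0$ as $r\to\infty$ by \eqref{sump} and dominated convergence; summing over the a.s.\ finitely many $\y\in K_\z\cup\{\z\}$ drives the conditional probability of the second event failing to $0$. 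Combined with the pointwise a.s.\ monotone decrease of $\widehat{\cY}^{\,r}$ in $r$, this makes $F_r$ monotone for large $r$ and ensures it eventually holds. The main obstacle is the verification step on $F_r$: one must check carefully that although inside labels located outside $K_\z$ may have different fates in $\bgz\ppr$ and $\bgz\mmr$ (since they may interact with outside-descending balls of opposite colour there), these discrepancies cannot propagate back into the dynamics at $\z$—otherwise they would exhibit an ancestor of a $\z$-reaching label at a site outside $K_\z$, contradicting the definition of $K_\z$.
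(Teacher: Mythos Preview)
Your argument has a genuine gap, precisely at the point you yourself flag as ``the main obstacle''. The resolution you offer---that a discrepancy propagating to $\z$ would force an ancestor of a $\z$-reaching label to lie outside $K_\z$---is incorrect, because influence in the annihilating dynamics does not travel only along ancestry lines. It also travels through annihilation chains.

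Concretely: on $F_r$ an outside-descending ball $\gamma$ may reach a site $\w\notin K_\z$ (this is allowed by your event). There it may annihilate, in $\bgz\ppr$ but not in $\bgz\mmr$ (or vice versa), the parent of an inside label $\beta$ whose position is some $\y\in K_\z$. Then $\beta$ is born in one process but not the other; arriving at $\y$, it annihilates an $\alpha\in\cA_\z$ in one process but not the other, and the discrepancy reaches $\z$ through $\alpha$'s descendants. In this chain no ancestor of a $\z$-reaching label ever sits outside $K_\z$; the link at $\w$ is an annihilation, not a birth. Enlarging $K_\z$ to close under such chains does not obviously keep it finite, so the approach does not repair easily.

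The paper avoids tracking influence chains altogether. It first uses the monotonicity lemma (\refL{Lmon}) to get the one-sided inclusions $\ActR_\z(t,\bgz\mmr)\subseteq\ActR_\z(t,\bgz\ppr)$ and $\ActB_\z(t,\bgz\ppr)\subseteq\ActB_\z(t,\bgz\mmr)$, and then argues on the signed count: if equality fails at some $t\le T$ then $Z_\z(t,\bgz\ppr)\ge Z_\z(t,\bgz\mmr)+1$, and by the same induction as in \refL{Lmon} this unit gap persists to time $T$. Markov's inequality together with \refL{Lexp} then gives $\Pr(\cE_r(T))\le 2e^{\lambda T}\sum_{|\bx|>r}p_\bx(T)\to0$, and monotonicity in $r$ (from \eqref{quarto}) finishes. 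This replaces your pathwise coupling of labels by a single expectation bound on $Z_\z(T,\bgz\ppr)-Z_\z(T,\bgz\mmr)$, which is insensitive to how discrepancies propagate.
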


\begin{proof}
Fix $T>0$. By Lemma~\ref{Lmon} we have \as{} that for all $\z\in\Z^d$, $t\in[0,T]$ and $r\ge0$
\begin{equation}\label{eq:Lbounda}
\ActR_\z(t,\bgz\mmr)\subseteq\ActR_\z(t,\bgz\ppr)\quad\text{and}\quad\ActB_\z(t,\bgz\ppr)\subseteq\ActB_\z(t,\bgz\mmr).
\end{equation}
For the reverse inclusion, we consider only the case $\z=\org$ for
notational convenience; the general case is analogous. Thus, let
$\mathcal{E}_r(T)$ 
denote the event that~\eqref{eq:Lbounda} holds (for all $\bz$)
but that for
some $t\in[0,T]$ equation~\eqref{acteq} fails for $\z=\org$. 
Then, on the event $\mathcal{E}_r(T)$ we have,
for some $t\in[0,T]$,
$Z_\org(t,\bgz\mmr)+1\le Z_\org(t,\bgz\ppr)$ 
and thus
\begin{equation}\label{eq:Lbound}
\cZ(t,\bgz\mmr)+\delta_{\x,\org}\le\cZ(t,\bgz\ppr).
\end{equation}
An induction argument, as in the proof of Lemma~\ref{Lmon}, shows that~\eqref{eq:Lbound} implies
\begin{equation}\label{we}
\cZ(T,\bgz\mmr)+\delta_{\x,\org}\le\cZ(T,\bgz\ppr).
\end{equation}
In particular, $Z_\org(T,\bgz\mmr)+1\le Z_\org(T,\bgz\ppr)$. 
Consequently, Markov's inequality and~\eqref{eq:Lbounda}, together with Lemma~\ref{Lexp}, yield
\begin{equation}\label{tho}
\Pr\big(\mathcal{E}_r(T)\big)\le
\E\big[Z_\org(T,\bgz\ppr)-Z_\org(T,\bgz\mmr)\big]\le2e^{\lambda t}\sum_{|\z|>r}p_\z(T),
\end{equation}
which tends to zero as $r\to\infty$. 
Moreover, \eqref{quarto} implies that $\cE_r\supseteq\cE_{r+1}$.
Hence, the event that \eqref{eq:Lbounda} holds but \eqref{acteq} fails for
arbitrarily large $r$ is 
$\cE(T):=\limsup_{r\to\infty}\cE_r(T)=\bigcap_{r\ge1}\cE_r(T)$, and \eqref{tho}
yields 
$\Pr\big(\mathcal{E}(T)\big)=0$ as required.
\end{proof}

\begin{lemma}\label{LX}
As $r\to\infty$,
the three processes
$\cZ(t,\QLE{\bgz}{r})$,
$\cZ(t,\bgz\mmr)$,
and
$\cZ(t,\bgz\ppr)$
\as{} all converge in the product space
$D[0,\infty)^{\Z^d}$ to a common limiting process
$\cZ^\ast(t,\bgz)
$.
Moreover, the convergence is in the strong sense that for every $\z$ and
$T<\infty$, there exists $L=L(\z,T)$
 such that the processes at $\z$ all are equal to
the limit $Z^\ast_\z(t,\bgz)$ for all $t\in[0,T]$ and  $r\ge L$.  
\end{lemma}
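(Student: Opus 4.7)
The plan is to deduce this lemma directly from the monotonicity sandwich \eqref{penta} together with the eventual-equality statement of Lemma \ref{Llim}. First I would fix a site $\z\in\Zd$ and a horizon $T<\infty$. By Lemma \ref{Llim}, on an \as{} event there exists a random $L=L(\z,T)<\infty$ such that \eqref{zeq} holds:
\[
Z_\z(t,\bgz\ppr)=Z_\z(t,\bgz\mmr)\qquad\text{for all } t\in[0,T] \text{ and } r\ge L,
\]
and I would define $Z^\ast_\z(t,\bgz)$ to be this common value on $[0,T]$. By \eqref{penta}, which holds \as{} via Lemma \ref{Lmon}, the middle process is squeezed,
\[
Z_\z(t,\bgz\mmr)\le Z_\z(t,\QLE{\bgz}{r})\le Z_\z(t,\bgz\ppr),
\]
so whenever the outer bounds coincide the middle process must take the same value. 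Hence for $r\ge L$ and $t\in[0,T]$, all three processes evaluated at $\z$ equal $Z^\ast_\z(t,\bgz)$.

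To upgrade this to a single \as{} event valid for \emph{every} $\z$ and $T$, I would intersect over the countable family $\{(\z,N):\z\in\Zd,\,N\in\N\}$; a general horizon $T>0$ is then handled by replacing $T$ with $\lceil T\rceil$. The values of $Z^\ast_\z(t,\bgz)$ obtained at different horizons are automatically consistent, since if $T_1<T_2$, then outer bounds agreeing on $[0,T_2]$ agree in particular on $[0,T_1]$. This yields a well-defined limiting process $\cZ^\ast(t,\bgz)=(Z^\ast_\z(t,\bgz))_{\z\in\Zd}$ on $[0,\infty)$.

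For convergence in the product space $D[0,\infty)^{\Zd}$, recall that the product topology is generated by the coordinate projections into $D[0,\infty)$, and $D[0,\infty)$ is itself topologised by Skorokhod convergence on every compact $[0,T]$. By construction, on the \as{} event above, for each $\z$ and each $T$, the paths $t\mapsto Z_\z(t,\bgz\ppr)$, $t\mapsto Z_\z(t,\bgz\mmr)$ and $t\mapsto Z_\z(t,\QLE{\bgz}{r})$ restricted to $[0,T]$ coincide pointwise with $t\mapsto Z^\ast_\z(t,\bgz)$ as soon as $r\ge L(\z,T)$. This pointwise eventual equality on every compact subinterval is far stronger than Skorokhod convergence, so the stated convergence in $D[0,\infty)^{\Zd}$ follows at once, and the last sentence of the lemma is simply a restatement of what we have already established.

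There is no genuine obstacle here: the analytical content, namely the first-moment argument \eqref{tho} together with monotonicity \eqref{quarto} that forces the outer bounds $\bgz\ppr$ and $\bgz\mmr$ to agree at $\z$ on $[0,T]$ from some random $r$ onwards, was carried out in Lemma \ref{Llim}. What remains here is purely a packaging step, combining that lemma with the sandwich \eqref{penta} and the separability of $\Zd$ to produce a single \as{} event.
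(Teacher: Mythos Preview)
Your proof is correct and follows the same route as the paper: Lemma~\ref{Llim} forces the outer processes $\bgz\ppr$ and $\bgz\mmr$ to agree at $\z$ on $[0,T]$ for all $r\ge L$, and the sandwich \eqref{penta} then pins down $\QLE{\bgz}{r}$ as well. The one step you leave implicit, and which the paper singles out via \eqref{quarto}, is that the common value of $Z_\z(t,\bgz\ppr)=Z_\z(t,\bgz\mmr)$ is actually independent of $r\ge L$ (the $-,r$ processes are increasing in $r$ and the $+,r$ decreasing, so once they meet at $r=L$ they are all equal thereafter); with that noted, your definition of $Z^\ast_\z$ is unambiguous and the remainder is exactly as you say.
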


\begin{proof}
\refL{Llim} shows that
  given $\z$ and $T$, there exists $L$ such that \eqref{zeq} holds for $r\ge
  L$ and all $t\le T$.
By \eqref{quarto}, this implies
$Z_\z(t,\bgz\ppr)=Z_\z(t,\bgz\mmr)=Z_\z^{+,L}=Z_\z^{-,L}$
for $r\ge L$. The result follows by \eqref{penta}.
\end{proof}

We will henceforth take $\cZx(t,\bgz)$ as the formal definition of
$\cZ(t,\bgz)$ for every initial configuration $\bgz$;
in other words, we define
\begin{equation}\label{lim}
\cZ(t,\bgz):=\cZ^\ast(t,\bgz)=\lim_{r\to\infty}\cZ(t,\QLE{\bgz}{r}),
\end{equation}
recalling that by \refL{LX}, we could as well use 
$\cZ(t,\bgz\ppr)$ or $\cZ(t,\bgz\mmr)$
in \eqref{lim}.
 Note that if $\bgz$ is
quasi-monochromatic, then
\refL{Lmon} implies that \as{}
\begin{align}
  \label{penta=}
\cZ(t,\bgz\mmr)
\le\cZ(t,\bgz)
\le\cZ(t,\bgz\ppr)
\end{align}
for all $t$ and $r$, 
and thus
$\cZx(t,\bgz)=\cZ(t,\bgz)$ by \refL{LX}, so there is no inconsistency. 

We verify next
that this process indeed behaves as the competition process that we have
described verbally in 
the introduction and 
Section~\ref{SStechnical}. 

\begin{lemma}\label{Ldef}
The process $\cZ(t,\bgz)$, defined by \eqref{lim},
has the following almost sure properties: For each site $\x\in\Z^d$ it holds that
\begin{romenumerate}
\item \label{Ldefa}
balls at $\x$ have a distinct labels and produce labeled offspring according to the corresponding clocks;
\item \label{Ldefb}
balls arriving at $\x$ are those born in nucleations at (possibly different) sites and annihilate according to the predefined rule;
\item \label{Ldefc}
there are no additional balls or annihilations occurring at $\x$.
\end{romenumerate}
\end{lemma}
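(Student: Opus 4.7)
My plan is to reduce everything to the unambiguous finite-initial case and then invoke the already-established stabilization result.

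First I would fix a site $\x \in \Z^d$ and a time horizon $T < \infty$, and appeal to Lemma~\ref{Llim} to obtain an almost surely finite random $L = L(\x, T)$ such that \eqref{acteq} holds for every $r \ge L$ and $t \in [0,T]$. Combining this with the sandwich \eqref{penta} and the monotonicity of Lemma~\ref{Lmon}, one gets
\[
\ActR_\x(t, \QLE{\bgz}{L}) \,=\, \ActR_\x(t, \bgz\ppr) \,=\, \ActR_\x(t, \bgz\mmr)
\]
throughout $[0,T]$ for $r \ge L$, and analogously for blue. Lemma~\ref{LX} then identifies these active-label sets with those of the limit process $\cZ(t, \bgz) = \cZx(t,\bgz)$ at $\x$ throughout $[0,T]$.

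Next I would note that the finite-initial process $\cZ(t, \QLE{\bgz}{L})$ is by construction unambiguous: almost surely only finitely many nucleations occur in $[0,T]$, and the process is defined by executing them in chronological order using the pre-assigned clocks, the associated realizations of $\gf$, and the fixed lexicographic tiebreaker for annihilations described in Section~\ref{SStechnical}. For this finite process, properties \ref{Ldefa}--\ref{Ldefc} at $\x$ on $[0,T]$ are immediate tautologies of the construction.

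I would then transfer these properties to the limit. Since the sets of active red and blue labels at $\x$ agree between the finite-initial process $\cZ(t, \QLE{\bgz}{L})$ and the limit $\cZ(t, \bgz)$ throughout $[0,T]$, each birth, offspring production, arrival and annihilation affecting $\x$ corresponds one-to-one across the two processes, so \ref{Ldefa}--\ref{Ldefc} carry over verbatim to the limit on $[0,T]$. Finally, a countable intersection over $\x \in \Z^d$ and integer $T \ge 1$ gives the almost sure conclusion uniformly in space and time. The real work has already been absorbed into Lemmas~\ref{Llim} and~\ref{LX}, which localize the effect of distant initial data; the only remaining subtlety, namely ruling out any \emph{phantom} ball or spurious annihilation slipping in between the finite approximation and the limit, is handled precisely by the identification of active-label sets provided by stabilization.
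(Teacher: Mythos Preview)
Your overall strategy---freeze a time horizon, invoke \refL{Llim} to match active label sets with a finite (or \qmc) truncation, and read off \ref{Ldefa}--\ref{Ldefc} from the unambiguous finite-initial construction---is exactly the paper's approach. However, there is a gap in your transfer step for \ref{Ldefb}. Label agreement \emph{at $\x$ alone} tells you that the arrivals at $\x$ (the jumps in $\ActR_\x \cup \ActB_\x$) coincide between the truncation and the limit, but \ref{Ldefb} asserts more: that each such arrival in the limit process is produced by a genuine nucleation, meaning the parent ball is itself active at its own site at the moment its clock rings. To verify that, you need label agreement not just at $\x$ but at every site from which a ball could be sent to $\x$ during $[0,T]$.

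The paper closes this gap by bounding those source sites via the all-red monochromatic configuration $\bgzaa$: the set $N_\x$ of locations where some nucleation in $\cZ(t,\bgzaa)$ sends a ball to $\x$ before time $T$ is \as{} finite (since $\cZ(T,\bgzaa)$ is locally finite), and for any \qmc{} $\bgz'$ the source sites are contained in $N_\x$. One then applies \refL{Llim} to each $\z \in N_\x \cup \{\x\}$ and takes $r$ large enough to work for all of them simultaneously. With label agreement secured on this finite neighbourhood, the nucleations feeding $\x$ genuinely occur in the limit process, and \ref{Ldefb}--\ref{Ldefc} follow. Your proof is easily repaired by inserting this step; without it the sentence ``each birth, offspring production, arrival and annihilation affecting $\x$ corresponds one-to-one'' is asserted rather than justified.
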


\begin{proof}
Fix $T<\infty$, $\x\in\Z^d$ and a configuration
$\bgz\in\{-1,0,1\}^{\Z^d}$. First, note that properties
\ref{Ldefa}--\ref{Ldefc} hold, by construction, for any quasi-monochromatic
initial configuration. Next, by Lemma~\ref{Llim} there exists \as{}
$r<\infty$ so that the labels active at $\x$ up to time $T$ coincide for
$\cZx(t,\bgz)$ and $\cZ(t,\bgz^{+,r})$. So, \ref{Ldefa} follows since it
holds for $\cZ(t,\bgz^{+,r})$.

Let $\bgz^{++}$ denote the monochromatic configuration consisting of one red ball at each site, and let $N_\x$ denote the set of locations where there is a nucleation sending a ball to $\x$ at some time $t\le T$ in the process $\cZ(t,\bgz^{++})$. Note that for every quasi-monochromatic configuration $\bgz'$, the locations at which there is a nucleation in $\cZ(t,\bgz')$ sending a ball to $\x$ (immediately annihilated or not) is a subset of $N_\x$.

Since $\cZ(T,\bgz^{++})$ is \as{} locally finite, the set $N_\x$ is \as{}
finite. According to Lemma~\ref{Llim} there exists \as{} $r<\infty$ so that
the labels active up to time $T$ at each $\z\in N_\x\cup\{\x\}$ coincide for
$\cZx(t,\bgz)$ and $\cZ(t,\bgz^{+,r})$. Consequently, the nucleations in
$\cZx(t,\bgz)$ resulting in a ball att $\x$ are the same as those in
$\cZ(t,\bgz^{+,r})$, and these are the only balls that appear at $\x$. It
follows that the annihilations at $\x$ caused by these nucleations are the
same for $\cZx(t,\bgz)$ and $\cZ(t,\bgz^{+,r})$, and that there are no
annihilations occurring other than these. This proves 
\ref{Ldefb} and \ref{Ldefc}.
\end{proof}

\begin{remark}\label{Rk}
Let us note that the limiting process $\cZx(t,\bgz)$ does not depend on the
order in which the initial 
configuration is being `revealed' in the limiting
procedure. Indeed, fix any enumeration of $\Z^d$ and let $\bgz^{(k)}$ 
denote the configuration whose coordinates equal $\zeta_\z$
for the first $k$ entries $\z$ of the enumeration and 0 otherwise.
Then, for every $r$ we may choose
$K<\infty$ so that $\bgz\mmr\le\bgz^{(k)}\le\bgz\ppr$
for all $k\ge K$. By Lemma~\ref{Lmon} it follows that for all $k\ge K$ and
$t\ge0$
\begin{equation}
\cZ(t,\bgz\mmr)\le\cZ(t,\bgz^{(k)})\le\cZ(t,\bgz\ppr).
\end{equation}
By Lemma~\ref{LX} we conclude that  also
$\cZ(t,\bgz^{(k)})$ converges to the  limiting process $\cZx(t,\bgz)$
as $k\to\infty$. 
(The same argument applies to the corresponding versions of 
$\cZ(t,\bgz\ppr)$ and $\cZ(t,\bgz\mmr)$.)
In particular, the distribution of the limiting process is
invariant with respect to translations.
\end{remark}

\subsection{Epilogue}

Having properly defined the process we may now extend the lemmas proven
above for quasi-monochromatic 
initial configurations to arbitrary configurations. 
\begin{proof}[Proof of \refL{Ltory}, general case]
 As in the proof of Lemma~\ref{Ldef},
the annihilations occurring at $\x$ up to time $T$ are the same for
$\cZx(t,\bgz)$ and $\cZ(t,\bgz^{+,r})$ when $r$ is large. Consequently, the
process consisting of purple balls is well-defined for arbitrary initial
configurations, as the limit as $r\to\infty$ of the purple process for
$\bgz\ppr$. This defines the conservative process for any $\bgz$, and it is
immediate that the claims in the lemma hold, since they hold for each
$\bgz\ppr$.
\end{proof}

\begin{proof}  [Proof of \refL{Lexp}, general case]
The proof above now applies to arbitrary $\bgz$.
\end{proof}

\begin{proof}[Proof of \refL{Lmon}, general case]
If $\bgz\le\bgz'$, then $\bgz\ppr\le\xpar{\bgz'}\ppr$ for every $r$, and
the already proven case applies. The result follows by \refLs{Llim} and 
\ref{LX}.
\end{proof}

\section{Fixation versus non-fixation}\label{sec:fixnonfix}

The goal of this section is to prove Theorem~\ref{thm:fixnonfix}. The first
part of the theorem, stating that the competing urn scheme started from an
unbalanced initial Bernoulli colouring will eventually fixate at red, is an
immediate consequence of Theorem~\ref{thm:Zd-limit} together with the
conservative coupling, see \refSS{ssec:fix}; 
the coupling decomposes the competition process into
the difference between two monochromatic processes, one of which is larger
by a factor. 

The second part of the theorem, stating that the competing urn scheme started from a balanced initial Bernoulli colouring does not fixate, will require more work. Let us first present a brief sketch of the proof. We start with the intuition that the state of the origin at time $t=1$ is unlikely to dictate the state of the origin at time $t\gg1$. Taking this intuition to its logical conclusion we should be able to choose a fast growing sequence of times $t_1,t_2,\dots$ such that the state of the origin at time $t_n$ is approximately independent from its states at times $t_1,\dots, t_{n-1}$.  One would therefore expect the origin to be red for infinitely many of the times $t_n$ and blue for infinitely many of the times $t_n$, which would complete the proof.

In order to make this rigorous we show that the state of the origin at time $t=1$ mostly depends on the descendants of balls which start near the origin. On the contrary, at time $t\gg1$ the state at the origin depends on descendants of balls from a much larger region, while balls originating near the origin contribute little. This will allow us to define a growing sequence of scales $r_1,r_2,\dots$ such that the state of the origin at time $t_n$ may be well approximated by considering \emph{only} the descendants of balls initially in the annulus $[-r_{n+1},r_{n+1}]^d\setminus [-r_n,r_n]^d$.  Since these annuli are disjoint, this will allow us to `decouple' the state of the origin at times $t_1,t_2,\dots$.

We implement this approach in Section~\ref{sec:decoupling}.  An important ingredient will be to understand the likely order of magnitude of the number of balls at the origin.  While the order of magnitude does not grows as fast as $e^{\lambda t}$ (as in the case $p\neq1/2$), it is still
likely to be at the order of its standard deviation, which is
$t^{-d/4}e^{\lambda t}$. This will be obtained via a second moment approach,
resting on the quantitative bounds for the monochromatic process
obtained in Proposition~\ref{propY}; see Section~\ref{sec:second} for details.

\subsection{Fixation for $p\neq1/2$}\label{ssec:fix}

We first consider the annihilating process $\cZ(t)$ in the unbalanced setting, that is, starting from a biased Bernoulli colouring, and prove fixation.

\begin{proof}[Proof of Theorem~\ref{thm:fixnonfix}\ref{thm:fixnonfixa}] 
Using \refL{Ltory}, we decompose $\cZ(t)$ into the difference of two monochromatic processes as follows:
\begin{equation}\label{kaa}
\cZ(t)=[\cR(t)+\cP(t)]-[\cB(t)+\cP(t)].
\end{equation}
Next, fix $\eps>0$ such that $2p-1>3\eps$. By
Theorem~\ref{thm:Zd-limit} there exists \as\ a (random) finite $T_0$ such
that, for all $t\ge T_0$, 
\begin{align}
  e^{-\lambda t}\big(R_\0(t)+P_\0(t)\big)>p-\eps\quad\text{and}\quad 
e^{-\lambda t}\big(B_\0(t)+P_\0(t)\big)<1-p+\eps,
\end{align}
and hence \eqref{kaa} yields $Z_\0(t)>\eps e^{\lambda t}>0$, as required. That is, the origin fixates \as{} to red, and by
translation invariance, every site \as{} fixates.
\end{proof}

\subsection{A second moment analysis for $p=1/2$}\label{sec:second}

In this subsection we consider the annihilating process $\cZ(t)$ in the
balanced setting, that is, 
starting from a symmetric Bernoulli colouring 
in which each site is given a
ball whose colour is determined by a fair coin flip.
We aim to prove the following bound on deviations of
$\cZ(t)$.

\begin{prop}\label{prop:deviation}
Consider the competing urn scheme $\cZ(t)$ in the balanced setting,
and assume that $\E\big[\normm{1}{\varphi}^2\big]<\infty$,
$\E\big[\normm{2}{\varphi}\big]<\infty$
and $\E[\|\varphi\|^2]<\infty$ holds.
Then there exists a constant $c>0$ such that for all $t>1/c$ we have
\begin{align}
  \Pr\big(e^{-\lambda t}\Zo(t)>ct^{-d/4}\big)>c.
\end{align}
\end{prop}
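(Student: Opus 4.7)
The plan is to run a second-moment / anti-concentration argument building on the machinery of \refS{sec:monochromatic}. The conservative coupling (\refL{Ltory}) yields the linear representation
\begin{equation*}
Z_\0(t)\,=\,\sum_{\z\in\Zd}\gz_\z\,X_{\z,\0}(t),
\end{equation*}
where $(\gz_\z)_\z$ are i.i.d.\ symmetric $\pm1$ signs (the balanced initial colouring) and the single-ball processes $(\cXz(t))_\z$ are mutually independent and independent of $(\gz_\z)_\z$. Following the decomposition~\eqref{eq:decompose} I split
\begin{equation*}
e^{-\lambda t}Z_\0(t)\,=\,\Sigma_1(t)+\Sigma_2(t)+\Sigma_3(t),
\end{equation*}
where $\Sigma_3(t)=\sum_\z\gz_\z\,p_\z(t)W_\z$ and $\Sigma_1,\Sigma_2$ are the analogous corrections. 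Since $\E[\gz_\z]=0$ and $\E[\gz_\z^2]=1$, the variance analysis from the proof of \refP{propY}\ref{prop:var} carries over: $\Var(\Sigma_1(t))=O(t^{-(d+2)/2})$ by \refP{prop:key}\ref{prop:key3}, $\Var(\Sigma_2(t))=O(e^{-\gl t})$ by \refP{prop:higher}, and $\Var(\Sigma_3(t))=\E[W_\0^2]\sum_\z p_\z(t)^2=\Theta(t^{-d/2})$ by \refP{prop:key}\ref{prop:key2}.

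By Chebyshev's inequality, $|\Sigma_1(t)+\Sigma_2(t)|=o(t^{-d/4})$ with probability tending to one, so everything reduces to showing that $\Pr(\Sigma_3(t)>c\,t^{-d/4})$ is bounded below. Conditionally on $(W_\z)_\z$, the sum $\Sigma_3(t)$ is a Rademacher linear combination with (random) coefficients $a_\z=p_\z(t)W_\z$; setting $X(t):=\sum_\z p_\z(t)^2W_\z^2$ and combining the classical bound $\E[(\sum_\z a_\z\gz_\z)^4]\le3(\sum_\z a_\z^2)^2$ with Paley--Zygmund yields
\begin{equation*}
\Pr\Bigl(|\Sigma_3(t)|\,\ge\,\tfrac12\,X(t)^{1/2}\,\Bigm|\,(W_\z)_\z\Bigr)\,\ge\,\tfrac{3}{16}.
\end{equation*}
It therefore suffices to prove that $X(t)\ge c'\,t^{-d/2}$ with probability bounded below.

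The main obstacle is that under our hypothesis $\E[\norm{\gf}^2]<\infty$ we have $\E[W_\0^2]<\infty$ but in general \emph{not} $\E[W_\0^4]<\infty$ (the latter would require $\E[\norm{\gf}^4]<\infty$), so a direct Chebyshev argument for $X(t)$ is unavailable. I handle this by truncation: fix $M>0$ and set $X_M(t):=\sum_\z p_\z(t)^2(W_\z\wedge M)^2\le X(t)$. Using \refP{prop:key}\ref{prop:key1}--\ref{prop:key2} one obtains $\sum_\z p_\z(t)^4\le(\max_\z p_\z(t))^2\sum_\z p_\z(t)^2=O(t^{-3d/2})$, whence $\Var(X_M(t))\le M^4\sum_\z p_\z(t)^4=O(M^4 t^{-3d/2})$. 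Choosing $M$ sufficiently large that $\E[(W_\0\wedge M)^2]\ge\tfrac12\E[W_\0^2]$ gives $\E[X_M(t)]=\Theta(t^{-d/2})$, and Chebyshev then yields $\Pr(X(t)\ge X_M(t)\ge c'\,t^{-d/2})\ge\tfrac12$ for all sufficiently large $t$.

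Combining this with the conditional Rademacher estimate gives $\Pr(|\Sigma_3(t)|\ge c''\,t^{-d/4})\ge\tfrac{3}{32}$, and symmetry of $(\gz_\z)_\z$ halves this to the one-sided bound $\Pr(\Sigma_3(t)>c''\,t^{-d/4})\ge\tfrac{3}{64}$. Absorbing the Chebyshev error on $\Sigma_1+\Sigma_2$ then produces the required one-sided lower bound on $e^{-\lambda t}Z_\0(t)$ for all sufficiently large $t$; adjusting the final constant downwards accommodates the entire range $t>1/c$. The truncation step is the delicate point, forced on us precisely because the Malthusian martingale $M_\0$ is only known to be $L^2$-bounded under the stated hypotheses.
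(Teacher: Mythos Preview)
There is a genuine gap at the very first step. The linear representation
\begin{equation*}
Z_\0(t)\,=\,\sum_{\z\in\Zd}\zeta_\z\,X_{\z,\0}(t),
\end{equation*}
with the single-ball processes $(\cX_\z)_\z$ \emph{mutually independent} and independent of $(\zeta_\z)_\z$, does not follow from \refL{Ltory}. That lemma gives the pathwise identity $Z_\0(t)=[\cR(t)+\cP(t)]_\0-[\cB(t)+\cP(t)]_\0$, where each bracket separately has the distribution of a monochromatic process, but the paper explicitly warns immediately after the lemma that these two monochromatic processes ``are \emph{not} independent''. Concretely, whenever a red and a blue ball merge, the resulting purple ball and all its descendants are shared between the red and blue lineages and run on a single clock; the red-lineage family $(\cX_\z:\zeta_\z=+1)$ and the blue-lineage family $(\cX_\z:\zeta_\z=-1)$ are therefore coupled. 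Your decomposition $\Sigma_1+\Sigma_2+\Sigma_3$ with independent $W_\z$ is thus unavailable, and in particular the step of conditioning on $(W_\z)_\z$ and treating $(\zeta_\z)_\z$ as independent Rademacher signs---on which your anti-concentration argument hinges---is not justified.

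The paper sidesteps this by never attempting a linear representation of $Z_\0(t)$ itself. Instead it conditions on the initial colouring $\bgz$: by \refL{Lexp}, the conditional mean
\begin{equation*}
S(t):=\E\bigl[e^{-\lambda t}Z_\0(t)\bigm|\bgz\bigr]=\sum_{\z\in\Zd} p_\z(t)\,\zeta_\z
\end{equation*}
\emph{is} a genuine i.i.d.\ sum, with $\Var S(t)=\sum_\z p_\z(t)^2=\Theta(t^{-d/2})$, and a Lyapunov CLT gives $\Pr\bigl(S(t)\ge K^{-1}\sqrt{\Var S(t)}\bigr)\ge K^{-1}$ for suitable $K$. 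Separately, the conservative coupling together with \refP{propY}\ref{prop:var} (using only $(a-b)^2\le 2a^2+2b^2$, which needs the marginals of $\cR+\cP$ and $\cB+\cP$ but not their independence) yields the second-moment bound $\E[(e^{-\lambda t}Z_\0(t))^2]=O(t^{-d/2})$; see \refL{LVarZ}. A conditional Paley--Zygmund inequality (\refL{lma:sec_mom}) with $\cF=\sigma(\bgz)$ then combines these two ingredients. The structural point is that linearity in the initial data survives at the level of the conditional expectation, even though the annihilating dynamics itself is not linear.
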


To show this, we shall use the following generic lemma, which is a conditional
version of the Paley--Zygmund inequality.

\begin{lma}\label{lma:sec_mom}
Let $X$ be a random variable and $\Fc$ a sub-\gsf{} (on some probability space). Suppose that $\E[X^2]\le K$ and $\Pr\big(\E[X\mid\Fc]\ge1/K\big)\ge 1/K$ for some constant $K$. Then
\begin{align}
  \Pr\big(X>1/(2K)\big) \ge 1/(4K^5).
\end{align}
\end{lma}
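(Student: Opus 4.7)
My plan is to prove the lemma by applying the Paley--Zygmund inequality \emph{conditionally} on $\mathcal{F}$, and then integrating the resulting bound over the favourable event using Cauchy--Schwarz.

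First I would set $m := \E[X\mid\Fc]$ and $V := \E[X^2\mid\Fc]$, and denote by $A$ the event $\{m\ge1/K\}\in\Fc$, so that $\Pr(A)\ge1/K$. The key step is the conditional Paley--Zygmund bound. Although $X$ is not assumed non-negative, the standard truncation trick still works: for any $\theta\in(0,1)$, pointwise $X\indic{X\le\theta m}\le\theta m\indic{X\le\theta m}$, so
\begin{equation*}
\E[X\indic{X\le \theta m}\mid\Fc]\le \theta m,
\end{equation*}
and hence $\E[X\indic{X>\theta m}\mid\Fc]\ge(1-\theta)m$. Conditional Cauchy--Schwarz then yields
\begin{equation*}
(1-\theta)^2 m^2 \le \E[X^2\mid\Fc]\,\Pr(X>\theta m\mid\Fc) = V\cdot\Pr(X>\theta m\mid\Fc).
\end{equation*}
Taking $\theta=\tfrac12$ and restricting to $A$ (where $m\ge1/K$, so $m/2\ge1/(2K)$), we obtain
\begin{equation*}
\Pr\bigpar{X>1/(2K)\mid\Fc}\,\ge\,\frac{m^2}{4V}\,\ge\,\frac{1}{4K^2 V}\quad\text{on }A.
\end{equation*}

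The second step is to average this conditional bound. Taking expectations,
\begin{equation*}
\Pr\bigpar{X>1/(2K)}\,\ge\,\E\Bigsqpar{\indic{A}\,\frac{1}{4K^2 V}}\,=\,\frac{1}{4K^2}\,\E[\indic{A}/V].
\end{equation*}
To lower bound $\E[\indic{A}/V]$, I would apply Cauchy--Schwarz in the form
\begin{equation*}
\Pr(A)^2\,=\,\bigpar{\E[\indic{A}\,V\qq\cdot V\qqw]}^2\,\le\,\E[\indic{A}V]\cdot\E[\indic{A}/V]\,\le\,\E[V]\cdot\E[\indic{A}/V],
\end{equation*}
and use $\E[V]=\E[X^2]\le K$ together with $\Pr(A)\ge1/K$ to conclude $\E[\indic{A}/V]\ge 1/K^3$. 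Combining the two displays gives $\Pr(X>1/(2K))\ge 1/(4K^5)$, as required.

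The only mildly subtle point is handling the signed nature of $X$ in the first step; everything else is routine once that is in place. No additional obstacle is foreseen, and the lemma will follow in a few lines.
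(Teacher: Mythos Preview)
Your argument is correct. One minor imprecision: the inequality $\E[X\indic{X\le\theta m}\mid\Fc]\le\theta m$ only holds on $\{m\ge0\}$, since $\theta m\,\Pr(X\le\theta m\mid\Fc)\le\theta m$ requires $m\ge0$. You only use it on $A$, where $m\ge1/K>0$, so nothing is lost, but it would be cleaner to state the conditional Paley--Zygmund bound on $A$ from the outset. Note also that $V>0$ on $A$ (indeed $V\ge m^2\ge1/K^2$ by conditional Cauchy--Schwarz), so the divisions by $V$ are harmless.

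Your route differs from the paper's. You work \emph{conditionally}: apply Paley--Zygmund given $\Fc$ to get $\Pr(X>1/(2K)\mid\Fc)\ge 1/(4K^2V)$ on $A$, then average and use a second Cauchy--Schwarz to bound $\E[\indic{A}/V]$ from below via $\E[V]\le K$. The paper instead works \emph{unconditionally} throughout: it sets $E=A\cap\{X>1/(2K)\}$, bounds $\E[X\ind_E]$ from above by $K^{1/2}\Pr(E)^{1/2}$ via Cauchy--Schwarz, and from below by observing that $\E[X\ind_A]\ge\Pr(A)/K$ while $\E[X\ind_{A\setminus E}]\le\Pr(A)/(2K)$. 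The paper's proof never introduces the conditional second moment $V$ and uses only one Cauchy--Schwarz application; yours is slightly longer but arguably more transparent as to \emph{why} the statement is a ``conditional Paley--Zygmund'' inequality.
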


\begin{proof}
Let $F$ be the event that $\E[X\mid\Fc]\ge1/K$ and let $E$ be the event that
both 
$\E[X\mid\Fc]\ge1/K$ and $X>1/(2K)$. Cauchy--Schwartz gives
\begin{align}
  \label{aa}
\Ex[X \ind_E]\, \le\, \|X\|_2 \|\ind_E\|_2\, \le\, K^{1/2}\pr{E}^{1/2}\, .
\end{align}
On the other hand we have that
\begin{equation}
\Ex[X\ind_F]\, \ge \, \frac{\pr{F}}{K}\,
\end{equation}
and, since $X\le 1/2K$ on $F\setminus E$,
\begin{equation}
\Ex[X \ind_{F\setminus E}]\, \le\, \frac{\pr{F\setminus E}}{2K}
\, \le \, \frac{\pr{F}}{2K}.
\end{equation}
Thus, since $\Pr(F)\ge 1/K$ by assumption,
\begin{equation}
\Ex[X\ind_{E}]\, \ge\, \frac{\pr{F}}{2K}\, \ge\, \frac{1}{2K^2}\,,
\end{equation}
which combined with \eqref{aa} gives
\begin{equation}
K^{1/2}\pr{E}^{1/2}\, \ge\, \frac{1}{2K^2}\, .
\end{equation}
It follows that $\pr{E}\ge 1/4K^5$, and the result follows.
\end{proof}

We also need an estimate of the variance.
\begin{lemma}
  \label{LVarZ}
Under the conditions of Proposition~\ref{prop:deviation}, we have for all $t\ge1$
  \begin{align}
    \Var \sqpar{e^{-\gl t}\Zo(t)}
    = O\bigpar{t^{-d/2}}.
  \end{align}
\end{lemma}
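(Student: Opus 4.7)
The plan is to reduce the variance bound to the monochromatic estimate of Proposition~\ref{propY}\ref{prop:var} via the conservative coupling. First, I would apply Lemma~\ref{Ltory} to write
\begin{equation*}
e^{-\lambda t}Z_\0(t) \,=\, e^{-\lambda t}\bigpar{R_\0(t)+P_\0(t)} \,-\, e^{-\lambda t}\bigpar{B_\0(t)+P_\0(t)}.
\end{equation*}
In the balanced setting $p=1/2$, the initial configuration $\bgz$ takes values in $\{-1,1\}^{\Z^d}$ i.i.d., so the positive part $\bgz_+$ is i.i.d.\ Bernoulli$(1/2)$, and likewise for $\bgz_-$. By Lemma~\ref{Ltory}, the process $\cR(t)+\cP(t)$ is distributed as the monochromatic process $\cY^{1/2}(t)$ started from $\bgz_+$, and similarly $\cB(t)+\cP(t)$ is distributed as $\cY^{1/2}(t)$ started from $\bgz_-$.

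Next, since the variance of a random variable depends only on its marginal distribution, I can invoke Proposition~\ref{propY}\ref{prop:var} with $p=1/2$ to conclude that for all $t\ge1$,
\begin{equation*}
\Var\bigsqpar{e^{-\lambda t}\bigpar{R_\0(t)+P_\0(t)}} \,=\, O(t^{-d/2})
\quad\text{and}\quad
\Var\bigsqpar{e^{-\lambda t}\bigpar{B_\0(t)+P_\0(t)}} \,=\, O(t^{-d/2}).
\end{equation*}
Crucially, I do not need these two quantities to be independent; the standard inequality
\begin{equation*}
\Var(X-Y) \,\le\, \bigpar{\sqrt{\Var X} + \sqrt{\Var Y}}^2 \,\le\, 2\Var X + 2\Var Y,
\end{equation*}
which follows from Cauchy--Schwarz applied to the covariance term, holds for arbitrary square-integrable random variables. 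Applying this to the decomposition of $e^{-\lambda t}Z_\0(t)$ immediately yields $\Var[e^{-\lambda t}Z_\0(t)] = O(t^{-d/2})$ as desired.

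I do not anticipate any serious obstacle, since all the work has been done in the monochromatic analysis of Section~\ref{sec:monochromatic}. The only point to verify with a little care is that Proposition~\ref{propY}\ref{prop:var} genuinely applies to $\cR(t)+\cP(t)$ and $\cB(t)+\cP(t)$ despite the latter being merely equal in distribution (not pointwise) to monochromatic processes with i.i.d.\ Bernoulli initial data, as noted in the comment following Lemma~\ref{Ltory}; but this is harmless because the variance is a functional of the marginal law alone.
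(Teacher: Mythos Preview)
Your proposal is correct and follows essentially the same route as the paper: decompose $Z_\0(t)$ via the conservative coupling of Lemma~\ref{Ltory}, observe that each of $R_\0(t)+P_\0(t)$ and $B_\0(t)+P_\0(t)$ has the marginal law of $Y^{1/2}_\0(t)$, apply Proposition~\ref{propY}\ref{prop:var}, and conclude by the elementary inequality $\Var(X-Y)\le 2\Var X+2\Var Y$. The paper's version is just a slightly more compressed presentation of exactly this argument.
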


\begin{proof}
  We use the conservative process with purple balls and \eqref{ja},
  recalling that both
  $R_\0(t)+P_\0(t)$ and   $B_\0(t)+P_\0(t)$ have the same distribution as
  $Y^p_\0(t)$ with $p=1/2$.
  Hence, by \refP{propY}\ref{prop:var},
  \begin{align}
    \Var\sqpar{\Zo(t)}
    \le 4 \Var\bigsqpar{\Yhalf_\0(t)} =O\bigpar{t^{-d/2}e^{2\gl t}},
  \end{align}
  as required.
\end{proof}

We are now in position to proceed with the proof of the proposition.

\begin{proof}[Proof of Proposition~\ref{prop:deviation}]
  Let $\bgz$ be a random symmetric Bernoulli colouring and let
  $\cF$ be the \gsf{} generated by $\bgz$.
  Since the clocks are independent of $\bgz$, \refL{Lexp} yields
  \begin{align}\label{xb}
 \E\bigl[e^{-\gl t}\Zo(t)\mid\cF\bigr]=\sum_{\bz\in\bbZ^d}p_\bz(t)\zeta_\bz =: S(t).
  \end{align}
  $S(t)$ is a sum of independent random variables with mean 0, so using \refP{prop:key},
\begin{equation}\label{eq:var_ref}
  \Var \bigsqpar{S(t)}=\sum_{\bz\in\Z^d}p_\z(t)^2
  =\Theta\bigpar{t^{-d/2}}.
\end{equation}
We next claim that, as $t\to\infty$, we have
\begin{equation}\label{eq:M_normal}
S(t)/\sqrt{\Var\sqpar{S(t)}}\to N(0,1)\quad\text{in distribution}.
\end{equation}
To see this, we use the central limit theorem with
the Lyapounov condition that
\begin{align}\label{lya}
  \beta(r,t):=
  \bigpar{\Var [S(t)]}^{-r/2}\sum_{\bz\in\bbZ^d}\E\bigsqpar{|p_\bz(t)\zeta_\bz|^r}
  =o(1)
\quad  \text{as } t\to\infty,
\end{align}
for some $r>2$; see \eg{} \cite[Theorem 7.2.2 and 7.2.4]{gut13}.
(The central limit theorem is usually stated for finite sums, but extends
immediately to $L^2$-convergent sums by truncation and the Cram\'er--Slutsky
theorem.)
We verify the Lyapounov condition \eqref{lya} with
$r=3$.
Then, by Proposition~\ref{prop:key}\ref{prop:key1}
\begin{align}\label{xa}
  \sum_{\bz\in\bbZ^d}\E\bigsqpar{|p_\bz(t)\zeta_\bz|^3}
  =   \sum_{\bz\in\bbZ^d}p_\bz(t)^3
\le\sup_{\z}p_\z(t)^{2}=O(t^{-d}).  
\end{align}
Hence, \eqref{eq:var_ref} and \eqref{xa} yield
$\beta(3,t)=O(t^{-d/4})$, which verifies \eqref{lya}, so
\eqref{eq:M_normal} holds.

To complete the proof,
let
$X:=e^{-\gl t}\Zo(t)/\sqrt{\Var[ S(t)]}$.
Recall that $\Zo(t)$ has mean zero, so we may from
\refL{LVarZ} and~\eqref{eq:var_ref} deduce that
$\E\bigsqpar{e^{-2\lambda t}\Zo(t)^2}\le K\Var\sqpar{S(t)}$
for some constant $K$; in other words,
$\E \sqpar{X^2}\le K$.
Increasing $K$ if
necessary, it follows from \eqref{xb} and~\eqref{eq:M_normal} that 
\begin{align}
  \Pr\bigpar{\E\sqpar{X\mid \cF}\ge 1/K}=
\Pr\big(S(t)\ge \sqrt{\Var\sqpar{S(t)}}/K\big)\ge 1/K
\end{align}
for all large $t$. Lemma~\ref{lma:sec_mom} therefore shows that
\begin{align}
  \Pr\Big(e^{-\lambda t}\Zo(t)> \sqrt{\Var[S(t)]}/(2 K)\Big)\ge1/(4K^{5}).
\end{align}
Since $\sqrt{\Var[S(t)]}=\Theta(t^{-d/4})$ by~\eqref{eq:var_ref},
the proof is complete.
\end{proof}

\subsection{A decoupling argument and conclusion of the proof}\label{sec:decoupling}

We now complete the proof of part \ref{thm:fixnonfixb} of \refT{thm:fixnonfix}.  We began this section with an overview of the proof, including the idea that we would consider a sequence of times $t_1,t_2,\dots$ and scales $r_1,r_2,\dots$ such that the state of the origin at time $t_n$ mostly depends on the descendants of balls which start in the annullus $B(\0,r_{n+1})\setminus B(\0,r_n)$.  We now implement this idea rigorously.  In addition to the sequences of times and scales we define an auxiliary sequence $(C_i)_{i\ge1}$ which controls the contribution at the origin of balls descending from within a growing sequence of regions.

Since we shall need to quantify the contribution coming from different locations we introduce some further notation.
For a configuration $\bgz$, 
recall that $\QLE{\bgz}{r}$ denotes the restrictions of $\bgz$ to $B(\0,r)$;
we similarly
let  $\QGT{\bgz}{r}$
denote the restriction of $\bgz$ to the complement of $\Br$,
i.e.,
$\QGT{\gz}{r}_\bz=\gz_\bz\cdot\indic{|\bz|>r}$.
Finally, let $\QGLE{\bgz}{r}{r'}:=\QLE{(\QGT{\bgz}{r})}{r'}$. 

We first bound the number of balls at the origin that originate from $B(\0,r)$. 

\begin{lma}\label{lma:resampling}
For every $r\ge1$ and $\delta>0$ there exists $C>0$ such that for all
$t\ge1$ and every $\bgz\in\set{-1,0,1}^{\bbZ^d}$ we have
\begin{align}
\Pr\Bigpar{\bigabs{\Zo(t,\bgz)-\Zo(t,\QGT{\bgz}{r})}>Ct^{-d/2}e^{\lambda t}}
<\delta.
\end{align}
\end{lma}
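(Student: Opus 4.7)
The plan is to control the difference by sandwiching both $\cZ(t,\bgz)$ and $\cZ(t,\QGT{\bgz}{r})$ between two extremal configurations that agree with $\bgz$ outside $B(\0,r)$, and then to use linearity of the expectation together with the decay of $p_\bz(t)$ established in Proposition~\ref{prop:key}.

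More concretely, given $\bgz\in\{-1,0,1\}^{\bbZ^d}$ and $r\ge1$, define the two configurations
\begin{align*}
\bgz^{++}_\bx &:= \begin{cases} +1 & |\bx|\le r,\\ \gz_\bx & |\bx|>r,\end{cases}
\qquad
\bgz^{--}_\bx := \begin{cases} -1 & |\bx|\le r,\\ \gz_\bx & |\bx|>r.\end{cases}
\end{align*}
Then inside $B(\0,r)$ the values of $\bgz$ and $\QGT{\bgz}{r}$ lie in $\{-1,0,1\}$ and $\{0\}$ respectively, while outside they both agree with $\bgz$; hence $\bgz^{--}\le\bgz\le\bgz^{++}$ and $\bgz^{--}\le\QGT{\bgz}{r}\le\bgz^{++}$. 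By the monotonicity lemma (\refL{Lmon}) we conclude that almost surely
\begin{equation*}
\bigabs{\Zo(t,\bgz)-\Zo(t,\QGT{\bgz}{r})}\,\le\,\Zo(t,\bgz^{++})-\Zo(t,\bgz^{--}),
\end{equation*}
and the right-hand side is a nonnegative random variable.

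Next I would apply \refL{Lexp} (again using monotonicity in $\bgz$) to compute the expectation of the right-hand side. Since the values of $\bgz^{++}$ and $\bgz^{--}$ coincide outside $B(\0,r)$ and differ by at most $2$ inside, linearity in the initial configuration gives
\begin{equation*}
\E\bigsqpar{\Zo(t,\bgz^{++})-\Zo(t,\bgz^{--})}
\,=\,e^{\lambda t}\sum_{|\bz|\le r}\bigpar{\bgz^{++}_\bz-\bgz^{--}_\bz}p_\bz(t)
\,\le\,2\,e^{\lambda t}\sum_{|\bz|\le r}p_\bz(t).
\end{equation*}
By \refP{prop:key}\ref{prop:key1} we have $p_\bz(t)=O(t^{-d/2})$ uniformly in $\bz$ for $t\ge1$, so the sum on the right-hand side is bounded by $K_r\,t^{-d/2}$ for a constant $K_r$ depending only on $r$ (and on $\Phi$). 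Note that the bound does \emph{not} depend on $\bgz$, which is crucial since the lemma is uniform in $\bgz$.

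Finally, an application of Markov's inequality to the nonnegative random variable $\Zo(t,\bgz^{++})-\Zo(t,\bgz^{--})$ yields
\begin{equation*}
\Pr\Bigpar{\bigabs{\Zo(t,\bgz)-\Zo(t,\QGT{\bgz}{r})}>Ct^{-d/2}e^{\lambda t}}\,\le\,\frac{2K_r}{C},
\end{equation*}
and choosing $C:=2K_r/\delta$ completes the proof. There is no real obstacle in this argument; the only thing to be careful about is to use the extremal sandwich $\bgz^{--}\le\bgz,\QGT{\bgz}{r}\le\bgz^{++}$ rather than trying to compare $\bgz$ and $\QGT{\bgz}{r}$ directly (which are not ordered in general), so that monotonicity is applicable.
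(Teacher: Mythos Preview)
Your proof is correct and follows essentially the same approach as the paper: dominate the difference via extremal modifications of $\bgz$ inside $B(\0,r)$, then combine \refL{Lmon}, \refL{Lexp}, \refP{prop:key}\ref{prop:key1}, and Markov's inequality. The only cosmetic difference is that you sandwich between two extremal configurations $\bgz^{--}$ and $\bgz^{++}$, whereas the paper uses only the upper one $\bgz^{+}$ together with the triangle inequality; this yields the constant $2$ in place of the paper's $3$, but the argument is otherwise identical.
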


\begin{proof}
  Recall that in $\QGT{\bgz}r$, all $\bgz_\bx$ with $|\bx|\le r$ have been
  reset to 0.
  Fix $r$ and define $\bgzp$ by instead letting
  $\bgzp_\bx:=1$ when $|\bx|\le r$, and as before $\bgzp_\bx=\bgz_\bx$  otherwise.
Then $\bgzp\ge\bgz$ and $\bgzp\ge\QGTz{r}$, and thus by \refL{Lmon},
$\Zo(t,\bgzp)\ge \Zo(t,\bgz)$ and $\Zo(t,\bgzp)\ge \Zo(t,\QGTz{r})$.
Consequently, the triangle inequality and \refL{Lexp} (applied four times) give
\begin{align}
  \E\bigabs{\Zo(t,\bgz)-\Zo(t,\QGT{\bgz}{r})}
& \, \le\,
  \E\bigl[\Zo(t,\bgzp)-\Zo(t,\bgz)\bigr]
  +
  \E\bigl[\Zo(t,\bgzp)-\Zo(t,\QGT{\bgz}{r})\bigr]
  \notag\\&
  \,\le\, 3 e^{\gl t} \sum_{\bz\in B(\0,r)} p_\bz(t)  
  \,\le\, 3(2r+1)^de^{\gl t}\sup_\bz p_\z(t).
\end{align}
The result follows by Proposition \ref{prop:key}\ref{prop:key1}
and Markov's inequality.
\end{proof}

We may now, finally, complete the proof of our main theorem.

\begin{proof}[Proof of \refT{thm:fixnonfix}\ref{thm:fixnonfixb}]
Throughout the proof $\bgz$ denotes a random symmetric Bernoulli colouring, and $c>0$ is the constant from Proposition~\ref{prop:deviation}.  Let $t_0=1/c$ and let $r_0=0$.  We now define the sequences $(r_i)_{i\ge 1}$, $(C_i)_{i\ge 1}$ and $(t_i)_{i\ge 1}$ sequentially.
For $i\ge1$, choose
\begin{romenumerate}
\item\label{f1}
  $r_i>r_{i-1}$, using \refL{LX} and \eqref{lim},
  such that
  \begin{align}
    \label{f1a}
    \Pr\Bigpar{\Zo(t_{i-1},\bgz)\neq \Zo(t_{i-1},\QLE{\bgz}{r_i})}\le 2^{-i};
  \end{align}

\item\label{f2}
  $C_i>0$, using Lemma~\ref{lma:resampling},
  such that for every $t\ge 1$
and $r\ge r_i$, 
\begin{align}\label{f2a}
\Pr\Bigpar{\bigabs{\Zo(t,\QLEz{r}) -\Zo(t,\QGLE{\bgz}{r_i}r)}>C_it^{-d/2}e^{\lambda t}}
\le 2^{-i}.
  \end{align}
\item\label{f3}
  $t_i>t_{i-1}+1$ such that $t_i^{d/4}>3c^{-1}C_i$, 
so that, by Proposition~\ref{prop:deviation}, we have
    \begin{align}
      \label{f3a}
\Pr\Bigpar{\Zo(t_i,\bgz)>3C_it_i^{-d/2}e^{\lambda t_i}} \ge c.
    \end{align}
  \end{romenumerate}

In particular,  \eqref{f2a} yields
  \begin{align}\label{f2b}
\Pr\Bigpar{\bigabs{\Zo(t_i,\QLEz{r_{i+1}})-\Zo(t_i,\bgzi)}>C_it_i^{-d/2}e^{-\lambda t_i}}
\le 2^{-i}.
  \end{align}
  Hence, using also \eqref{f1a},
  \begin{align}\label{f1b}
\Pr\Bigpar{\bigabs{\Zo(t_i,\bgz)-\Zo(t_i,\bgzi)}>C_it_i^{-d/2}e^{-\lambda t_i}}
    \le 2^{-i}+2^{-i-1}
    \le 2^{1-i}.
  \end{align}
 Thus, \eqref{f3a} implies 
  \begin{align}\label{f3b}
    \Pr\Bigpar{\Zo(t_i,\bgzi)>2C_it_i^{-d/2}e^{\lambda t_i}} \ge c- 2^{1-i}.
  \end{align}
  
Next, we define two `failure' events, of which (at least) one must occur for the
origin to be blue at all large times.

\begin{itemize}
\item
Let $F_1$ be the event that \emph{for infinitely many} $i\ge1$ we have
\begin{equation}\label{eq:F1}
\bigabs{\Zo(t_i,\bgz)-\Zo(t_i,\bgzi)}>C_it_i^{-d/2}e^{-\lambda t_i}.
\end{equation}
\item  Let $F_2$ be the event that \emph{for at most finitely many} $i\ge1$ we have
\begin{equation}\label{eq:F2}
\Zo(t_i,\bgzi)>2C_it_i^{-d/2}e^{\lambda t_i}.
\end{equation}
\end{itemize}
By \eqref{f1b},
it follows that~\eqref{eq:F1}  occurs with probability
at most $2^{1-i}$. Hence, by the Borel--Cantelli lemma, we have that
$\Pr(F_1)=0$. Moreover, by \eqref{f3b},
\eqref{eq:F2} occurs with probability at least $c/2$ for large $i$.
Note that the processes $\cZ(t,\QGLE{\bgz}{r_i}{r_{i+1}})$, for $i\ge1$,
are mutually independent by our construction.
Hence the events in \eqref{eq:F2} are independent,
and thus the other Borel--Cantelli lemma implies that $\Pr(F_2)=0$. 

Let $I_1$ and $I_2$ denote the sets of $i$'s for
which~\eqref{eq:F1} and~\eqref{eq:F2} occur, respectively.
We have shown that $I:=I_2\setminus I_1$ is infinite almost surely.
To complete the proof, we note that for each $i\in I$ we have
\begin{align*}
  \Zo(t_i,\bgz)&\ge \Zo(t_i,\bgzi)-C_it_i^{-d/2}e^{\lambda t_i}
  >C_it_i^{-d/2}e^{\lambda t_i}
             >0.
\end{align*}
Hence, there are \as{} arbitrarily large times $t$ such that $\Zo(t,\bgz)>0$ and
thus $\bo$ is red.
By symmetry there are \as{} also arbitrarily large $t$ with $\bo$ being blue.
This completes the proof of part~\ref{thm:fixnonfixb} of
Theorem~\ref{thm:fixnonfix}.
\end{proof}

\section{Existence of the density}\label{sec:density}

 In this section $\cZ(t)$ will describe the evolution of the system
starting from the $p$-random Bernoulli colouring, where $p\in[0,1]$ is arbitrary. We aim to show that the
density of red sites, as defined in~\eqref{eq:density}, is indeed \as{}
well-defined for all $t\ge0$.

\begin{thm}\label{Tdensity}
Assume $\E\,\norm{\gf}<\infty$.
Then, for the competing urn scheme on $\Z^d$ starting from a $p$-random
Bernoulli colouring, 
almost surely,
the density $\rho(t)$ of red urns, as defined in~\eqref{eq:density},
exists for all $t\ge0$ and 
\begin{align}\label{sylta}
  \rho(t)=\Pr\big(Z_\0(t)>0\big).
\end{align}
\end{thm}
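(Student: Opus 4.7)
My plan is to combine the multidimensional Birkhoff--Wiener ergodic theorem with a short continuity argument to pass from each fixed $t$ to uncountably many $t$ simultaneously. First I would observe that the full randomness driving the process---the initial Bernoulli colouring $(\eta_\z)_{\z\in\Z^d}$ together with the Poisson clocks and offspring realisations attached to each label---is, by the labelling convention of Section~\ref{SStechnical}, an i.i.d.\ random field indexed by $\Z^d$ (the data at each site $\z$ consisting of $\eta_\z$ and the clocks/offspring of all descendant labels rooted at $\z$). Hence the $\Z^d$-shift action on this random field is ergodic, and by translation invariance of the dynamics the construction in Section~\ref{sec:existence} is shift-equivariant: $Z_\z(t;\omega)=Z_\0(t;T_\z\omega)$ for all $\z,t$ and almost every $\omega$. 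Wiener's ergodic theorem applied to the bounded random variable $\indic{Z_\0(t)>0}$ therefore yields, for each fixed $t\ge0$,
\begin{equation*}
\rho_n(t):=\frac{1}{(2n+1)^d}\sum_{\z\in[-n,n]^d}\indic{Z_\z(t)>0}\;\longrightarrow\;f(t):=\Pr(Z_\0(t)>0)\qquad\text{almost surely.}
\end{equation*}

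To upgrade this to \emph{simultaneous} convergence for all $t\ge0$ I would introduce the auxiliary averages
\begin{equation*}
K_n(s_1,s_2):=\frac{1}{(2n+1)^d}\sum_{\z\in[-n,n]^d}\indic{Z_\z\text{ is not constant on }[s_1,s_2]},
\end{equation*}
and conclude analogously that $K_n(s_1,s_2)\to h(s_1,s_2):=\Pr(Z_\0\text{ is not constant on }[s_1,s_2])$ almost surely for each fixed $s_1\le s_2$. A countable union over rational $t$ and rational pairs $s_1\le s_2$ then produces an event $\Omega_0$ of probability one on which all these convergences hold. The deterministic bound $|\rho_n(t)-\rho_n(s_2)|\le K_n(s_1,s_2)$ holds whenever $s_1\le t\le s_2$, because any $\z$ contributing to the left-hand side must have $Z_\z$ changing value somewhere in $[s_1,s_2]$. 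Passing $n\to\infty$ for rational $s_1<t<s_2$ yields $|\limsup_n\rho_n(t)-f(s_2)|\le h(s_1,s_2)$ and the analogous bound for $\liminf$; sending $s_1\uparrow t$ and $s_2\downarrow t$ through the rationals will complete the proof once we know that $f(s_2)\to f(t)$ (immediate from right-continuity of $t\mapsto Z_\0(t)$ and bounded convergence) and that $h(s_1,s_2)\to 0$.

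The last point is the technical heart of the argument. I would estimate $h(s_1,s_2)$ by the expected number of nucleations that deliver a (non-purple) ball to the origin in $[s_1,s_2]$, which is an upper bound on the number of jumps of $Z_\0$ on that interval. Using the conservative coupling of Section~\ref{sec:coupling} (\refL{Ltory}), the total ball count $R+B+P$ is distributed as the monochromatic process started from one ball at each site; since $\E[Y^1_\z(u)]=e^{\lambda u}$ by \refP{propY}\ref{prop:E}, the instantaneous expected arrival rate at $\0$ equals $\sum_\z\E[Y^1_\z(u)]\mu(-\z)=\lambda e^{\lambda u}$, so the expected number of arrivals in $[s_1,s_2]$ is at most $e^{\lambda s_2}-e^{\lambda s_1}$. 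This vanishes as the interval shrinks to $\{t\}$, and Markov's inequality gives $h(s_1,s_2)\to 0$, which together with the previous paragraph establishes $\rho_n(t)\to f(t)$ simultaneously for every $t\ge0$ on $\Omega_0$.
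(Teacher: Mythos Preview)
Your proposal is correct and follows essentially the same route as the paper: the multidimensional ergodic theorem for each fixed $t$ (the paper adds a Kolmogorov $0$--$1$ argument to identify the limit, whereas you invoke ergodicity of the i.i.d.\ shift directly), followed by a continuity step showing $\Pr(Z_\0\text{ jumps in }[s_1,s_2])\to0$ as the interval shrinks, which the paper packages as \refL{L0} and implements via upper and lower densities $\rhou_+(I),\rhol_-(I)$ rather than your rational approximation with $K_n$. One small inaccuracy worth correcting: \refL{Ltory} does \emph{not} assert that $R+B+P$ is distributed as the monochromatic process from one ball per site; rather, $R+P$ and $B+P$ are separately monochromatic from $\bgz_+$ and $\bgz_-$, and bounding the jumps of $Z_\0=(R_\0+P_\0)-(B_\0+P_\0)$ by the sum of the jumps of these two increasing processes gives $\E[\#\text{jumps}]\le p(e^{\lambda s_2}-e^{\lambda s_1})+(1-p)(e^{\lambda s_2}-e^{\lambda s_1})=e^{\lambda s_2}-e^{\lambda s_1}$, which is the bound you want.
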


We begin with a lemma.

\begin{lemma}\label{L0}
Assume $\E\,\norm{\gf}<\infty$. For every $T<\infty$ and $\eps>0$ there
exists
$\gd>0$ such that for any $p\in[0,1]$ and interval $I\subseteq[0,T]$ of length at most $\gd$, we have
\begin{align}\label{l0}
  \Pr\bigl(Z_\0(t) \text{\rm\ is {not} constant for $t\in I$}\bigr)
<\eps.
\end{align}  
\end{lemma}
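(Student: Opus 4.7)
The plan is as follows. The process $t\mapsto Z_{\bo}(t)$ is piecewise constant and jumps only at nucleation events that deposit at least one ball at the origin: a ring of a ball at $\bz$ for which the drawn configuration $\gf$ satisfies $\gf(-\bz)=0$ leaves $Z_{\bo}$ unchanged, and since the parent is not removed at its own ring, a ring at $\bo$ with $\gf(\bo)=0$ likewise does nothing at the origin. Let $N(I)$ count the nucleation events during $I$ that place at least one ball at the origin; the event considered in the lemma is contained in $\{N(I)\ge 1\}$, so by Markov's inequality it suffices to prove $\E[N(I)]\le C(T)\,|I|$ for some constant depending only on $T$.

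Conditioning on the state at time $s$: each of the $M_\bz(s):=|Z_\bz(s)|$ balls at $\bz$ rings at rate $1$ and independently draws $\gf$ from $\Phi$, and this draw puts at least one ball at the origin with probability at most $\E[\gf(-\bz)]=\mu(-\bz)$. Taking expectations and integrating,
\begin{equation*}
\E[N(I)]\,\le\,\int_I\sum_{\bz\in\Zd}\E[M_\bz(s)]\,\mu(-\bz)\,\d s.
\end{equation*}
The crucial ingredient is a bound $\E[M_\bz(s)]\le e^{\gl s}$, uniform over $\bz\in\Zd$ and $p\in[0,1]$, which I will obtain from the conservative coupling of \refL{Ltory}: $M_\bz(s)=R_\bz(s)+B_\bz(s)\le[R_\bz(s)+P_\bz(s)]+[B_\bz(s)+P_\bz(s)]$, with the two bracketed monochromatic processes distributed as $Y^p_\bz(s)$ and $Y^{1-p}_\bz(s)$ respectively. \refP{propY}\ref{prop:E} --- whose only hypothesis $\E\,\norm{\gf}<\infty$ is exactly our assumption here --- combined with translation invariance yields $\E[Y^q_\bz(s)]=q\,e^{\gl s}$ for every $q\in[0,1]$, and summing over $q=p$ and $q=1-p$ gives the claim.

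Substituting into the integral and using $\sum_\bz\mu(-\bz)=\gl$ from~\eqref{eq:phi:mean-mu}, I obtain $\E[N(I)]\le \gl\, e^{\gl T}\,|I|$. Choosing $\gd:=\eps/(\gl e^{\gl T})$ then gives the claim for every $I\subseteq[0,T]$ with $|I|\le\gd$. No serious obstacle is anticipated. The only point that merits care is the initial reduction --- verifying that $Z_\bo$ really is constant between the events counted by $N$ --- which rests on the facts that balls at a given site all share a single colour (so no spontaneous annihilation can occur) and that parents persist through their own ring times; uniformity in $p$ is then automatic from the symmetric use of the $Y^p$ and $Y^{1-p}$ monochromatic bounds.
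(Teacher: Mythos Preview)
Your proof is correct. Both your argument and the paper's rest on the conservative coupling (\refL{Ltory}) and the first-moment identity $\E[Y^q_\0(s)]=q\,e^{\gl s}$, and both arrive at the bound $\gl e^{\gl T}|I|$ (the paper's version carries an extra factor~$2$ from an $\eps/2$ split). The route differs slightly: the paper first reduces the \emph{event} to the monochromatic process --- if both $R_\0+P_\0$ and $B_\0+P_\0$ are constant on $I$ then so is $Z_\0$ --- and then exploits that $Y^q_\0(t)$ is nondecreasing to identify $\{Y^q_\0\text{ not constant on }[a,b]\}=\{Y^q_\0(b)>Y^q_\0(a)\}$, bounding this by Markov on the increment. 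You instead work with $Z_\0$ directly, observe that it can jump only when a nucleation deposits a ball at the origin, and bound the expected number of such deposits by integrating the instantaneous rate $\sum_\z |Z_\z(s)|\,\mu(-\z)$; the coupling enters only to bound $\E|Z_\z(s)|\le e^{\gl s}$. Your approach avoids the monotonicity step and is in that sense more direct for the two-type process; the paper's version is a touch shorter once the reduction to the monochromatic case is made.
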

\begin{proof}
Using \refL{Ltory} (and replacing $\eps$ by $\eps/2$), we see that it
suffices to prove the corresponding result for the monochromatic process $\cY^p(t)$.
Let $[a,b]\subseteq [0,T]$.
Since the monochromatic process is (weakly) increasing at each site,
we obtain using
Markov's inequality together with \eqref{propE} and its proof,
\begin{align}
  \Pr\bigl(Y_\0^p(t) \text{\rm\ is {not} constant for $t\in [a,b]$}\bigr)
&=  \Pr\bigl(Y_\0^p(b)>Y_\0^p(a)\bigr)
\le\E[Y_\0^p(b) -Y_\0^p(a)]
\notag\\&
= p\bigl(e^{\gl b}-e^{\gl a}\bigr)
\le (b-a) \gl e^{\gl T}.
\end{align}  
The result follows by taking $\gd$ small enough.
\end{proof}

\begin{proof}[Proof of \refT{Tdensity}]
First consider a fixed $t\ge0$. We use a standard type of argument.
That the limit in \eqref{eq:density}  exists, almost surely, 
for a fixed $t\ge0$ 
is a consequence of 
translation invariance and
the (multivariate) ergodic theorem
(see \eg{} \cite[Theorem 10.12]{kallenberg02}).
Using the construction of $\cZ(t)$ in \refSS{SStechnical}, 
$\cZ(t)$ is a measurable deterministic function of 
the clocks and the initial colouring. Furthermore, 
by first considering monochromatic processes and then using \refL{Ltory},
we see that
changing a finite
number of the clocks and initial colours can only affect $Z_\x(t)$ for
finitely many $\x$, \as{}, which will not change the limit
\eqref{eq:density}. Thus $\rho(t)$ is measurable with respect to the
corresponding tail \gsf, and the Kolmogorov 0--1 law implies that
$\rho(t)$ is \as{} equal to a deterministic constant.
Finally, by taking expectations in \eqref{eq:density} and using 
the bounded convergence theorem, \as,
\begin{equation}\label{Edensity}
\rho(t)=
\E \rho(t)
=
\lim_{n\to\infty}\frac{1}{(2n+1)^d}\sum_{\z\in[-n,n]^d}
\Pr\bigpar{Z_\z(t)>0}
= \Pr\bigpar{Z_\0(t)>0}.
\end{equation}
This establishes \eqref{sylta} for a fixed $t\ge0$.

We next show how to extend this equality to all $t\ge0$
simultaneously. 
Define the upper and lower densities $\rhou(t)$ and $\rhol(t)$ as in 
\eqref{eq:density} but using $\limsup$ and $\liminf$, respectively.
These are thus always defined, and \as{} equal to each other and given by~\eqref{sylta}.

Given an interval $I$, define similarly $\rhou_+(I)$ as the upper density of
sites that are red for \emph{some} $t\in I$, and  $\rhol_-(I)$ as the 
lower density 
of points that are red for \emph{all} $t\in I$. The argument just given for
$\rho(t)$ shows also that these densities \as{} exist and are equal to the
corresponding probabilities at $\0$.
Let $T<\infty$ and $\eps>0$, and let $\gd$ be as in \refL{L0}.
Then \eqref{l0} implies that for any fixed
interval $I\subseteq[0,T]$ of length at most $\gd$ we have
\as
\begin{align}\label{eva}
\rhou_+(I)-\rhol_-(I)<\eps
\end{align}
Furthermore, for all $t\in I$,
\begin{align}\label{anna}
  \rhol_-(I)\le \rhol(t)\le\rhou(t)\le\rhou_+(I),
\end{align}
and thus by \eqref{eva}, \as,
\begin{align}\label{axel}
  \sup_{t\in I}\bigpar{\rhou(t)-\rhol(t)} \le\rhou_+(I)-\rhol_-(I)<\eps.
\end{align}
By covering $[0,T]$ by a finite number of intervals of length at most
$\delta$ we conclude that \as
\begin{align}
  \label{theodor}
\sup_{t\in [0,T]}\bigpar{\rhou(t)-\rhol(t)} <\eps,
\end{align}
and sending $\eps\to0$ and $T\to\infty$ 
shows that \as\
$\rhou(t)=\rhol(t)$ for all $t$ simultaneously.

Furthermore, write for convenience 
$f(t):=\Pr\bigpar{Z_\0(t)>0}$, so $\rho(t)=f(t)$ \as{}
for each fixed $t$.
Fix again an interval $I$ as above.
Then \eqref{l0} implies that for any $s,t\in I$,
$|f(s)-f(t)|<\eps$.
Fix $s\in I$.
Since \eqref{anna} and \eqref{eva} imply that \as
\begin{align}\label{gerion}
 | \rhou_+(I)-f(s)|
=  | \rhou_+(I)-\rhou(s)|
\le \rhou_+(I)-\rhol_-(I) <\eps,
\end{align}
it follows that \as, 
\begin{align}\label{nix}
\sup_{t\in I} | \rhou_+(I)-f(t)|
\le | \rhou_+(I)-f(s)| + \sup_{t\in I} | f(s)-f(t)|
<2\eps
\end{align}
and thus, using \eqref{anna} and \eqref{eva} again, \as
\begin{align}\label{nax}
\sup_{t\in I} | \rhou(t)-f(t)|
< 2\eps + |\rhou(s)-\rhou_+(I)|
<2\eps+ \bigpar{\rhou_+(I)-\rhol_-(I)} 
<3\eps.
\end{align}
By covering $[0,T]$ by a finite number of intervals of length at most
$\delta$ we conclude that \as{} the same holds for $I$ replaced by $[0,T]$, and then
sending $\eps\to0$ and $T\to\infty$ shows that \as{} $\rhou(t)=f(t)$ for
all $t\ge0$. 
Hence, \as, $\rho(t)=\rhou(t)=f(t)$ for all $t$ simultaneously.
\end{proof}

Together with Theorem~\ref{thm:fixnonfix} it follows that for $p>\frac12$ \as{} $\rho(t)\to1$ as $t\to\infty$.

\section{Dealing with death}\label{s:death}

As we have defined our process, at each ring of a clock, the corresponding ball produces offspring according to $\Phi$, and remains itself where it was. Consequently, in the monochromatic version of our process, once a ball is born, it remains at its position at all future times. We shall in this section describe briefly how the results obtained for this process can be extended to allow balls to die (disappear) as they reproduce.
(Recall \refR{r:death}.)
This is obviously more general, since we can let the parent be replaced by a copy of itself.
In particular, this extension allows us to consider models where the balls move around at random, such as the standard (continuous time, discrete space) branching random walk where particles perform independent simple symmetric random walks, and in each step, with some probability, split in two or more independent copies.

So, consider the model in which particles have an exponentially distributed life time, at the end of which they are removed and replaced by a configuration $\varphi$, shifted to the position of the particle, drawn from $\Phi$. We assume, as before, that $\Phi$ is an irreducible probability measure on finite non-negative (but not necessarily non-empty) configurations on $\Z^d$, satisfying $1<\E[\norm{\gf}]<\infty$. Under the condition that $\E[\norm{\gf}]>1$, then the total number of balls $\norm{\cX(t)}$, when starting from a single ball at the origin, is a supercritical branching process.\footnote{Note that we above have assumed, implicitly, that $\Phi$ is supported on nonempty configurations, as the contrary would simply correspond to a rescaling of time. This is no longer assumed here, resulting in the possible extinction of the process evolving from a single ball. Extinction will, of course, not be possible when starting from an infinite starting configuration.} We outline below how our arguments may be adapted to cover this more general family of processes.
(As before, we assume that $\E[\norm{\gf}^{4+\eps}]<\infty$ and that~\eqref{eq:displacement} holds where appropriate.)

There are four places at which our arguments need modification. First, in
Section~\ref{sec:oneball}, we need to compensate for the death of
particles. Write $\varphi'$ for the \emph{change} caused as a clock rings at
the origin, and by $\mu'$ its expectation. Then
$\varphi'=\varphi-\delta_{\x,\0}$ and $\mu'(\x)=\mu(\x)-\delta_{\x,\0}$. 
Similarly, redefine $\gl:= \E[\norm{\gf}]-1>0$.
By
replacing $\varphi$ and $\mu$ by $\varphi'$ and $\mu'$
throughout Section~\ref{sec:oneball}, then all
results continue to hold for the more general family of processes.
(Note, in particular, how the
expression $\lambda-\Re\widehat\mu(\u)$ is unaffected by these changes.) 

Secondly, in Section~\ref{sec:monochromatic}, when $\cY^p(t)$ is no longer
non-decreasing, we need an argument to deduce~\eqref{Fe}
from~\eqref{delta-as}. A simple large deviation estimate will suffice, since
if $Y_\0^p(t)$ changes significantly during a short time span, then a
greater than expected number of clock rings must have occurred. To make this
formal we introduce the events 
\begin{align}
A_n:=&\big\{e^{-\lambda\delta n}Y_\0^p(\delta n)\in(pe^{-\lambda\delta},pe^{\lambda\delta})\big\},\\
B_n:=&\big\{e^{-\lambda t}Y_\0^p(t)\ge p(1-2\delta)e^{-2\lambda\delta}\text{ for all }t\in[\delta n,\delta(n+1)]\big\},\\
C_n:=&\big\{e^{-\lambda t}Y_\0^p(t)\le p(1+2\delta)e^{2\lambda\delta}\text{ for all }t\in[\delta n,\delta(n+1)]\big\}.
\end{align}
It will thus suffice to show that for every $\delta\in(0,\frac12)$ 
\as{} the events
$B_n$ and $C_n$ will occur for all but finitely many $n$. 
Let $M:=Y_\0^p(\gd n)$ be the number of balls present at the origin at time
$\delta n$, 
and let $y_n:=pe^{\lambda\delta(n-1)}$.
On the event
$A_n\cap B_n^c$,  $M >y_n$, and of these $M$ balls at
least $M-(1-2\delta) y_n$ must
die before time $\delta(n+1)$. Since each balls dies with probability
$1-e^{-\gd}<\delta$, 
Chebyshev's inequality implies, conditioned on $M>y_n$,
\begin{align}\label{pAB}
  \Pr(A_n\cap B_n^c\mid M) 
\le \frac{M}{\bigpar{M-(1-2\gd)y_n-\gd M}^2}
= \frac{M}{\bigpar{(1-\gd)M-(1-2\gd)y_n}^2}.
\end{align}
The right-hand side is decreasing in $M\ge y_n$, and thus, for all such $M$,
\CCreset
\begin{align}\label{pAB2}
  \Pr(A_n\cap B_n^c\mid M) \le \frac{y_n}{\bigpar{(1-\gd)y_n-(1-2\gd)y_n}^2}
=C y_n\qw = C' e^{-\gl\gd n}.
\end{align}
Furthermore, this holds trivially for $M<y_n$ too, since the conditional
probability then is 0.
Consequently, $  \Pr(A_n\cap B_n^c) \le C' e^{-\gl\gd n}$, and
the Borel--Cantelli lemma shows that \as{} the event $A_n\cap B_n^c$ occurs for
only finitely many $n$.

Similarly, 
with $y'_n:=pe^{\lambda\delta(n+2)}$,
on the event $A_{n+1}\cap C_n^c$ the number of balls
at the origin exceeds $(1+2\delta)y_n'$ at some point
during the time interval.
Let $\tau$ be the first time that this happens,
and $N\ge(1+2\delta)y'_n$ the number of balls at that time.
At least $N- y'_n$ of these balls must die before time $\gd(n+1)$.
Conditioned on $\tau$ and $N$, each ball dies with probability less than
$\delta$, and Chebyshev's inequality yields, for $N\ge y''_n:=(1+2\gd)y'_n$,
\begin{align}\label{pAC}
  \Pr(A_{n+1}\cap C_n^c\mid\tau,N) 
\le \frac{N}{\bigpar{(1-\gd)N-y'_n}^2}
\le \frac{y''_n}{\bigpar{(1-\gd)y''_n-y_n}^2}
=C'' y_n\qw = C''' e^{-\gl\gd n}.
\end{align}
Hence 
$  \Pr(A_{n+1}\cap C_n^c) \le C'''e^{-\gl\gd n}$, so
 the event $A_{n+1}\cap C_n^c$ occurs for only finitely many $n$.

Since $A_n$ \as{}
occurs for all large $n$ by~\eqref{delta-as}, it follows that for every
$\delta>0$ \as{}
\begin{equation}
p(1-2\delta)e^{-2\lambda\delta}\le\liminf_{t\to\infty}e^{-\lambda t}Y_\0^p(t)\le\limsup_{t\to\infty}e^{-\lambda t}Y_\0^p(t)\le p(1+2\delta)e^{2\lambda\delta}.
\end{equation}
This completes the proof of Theorem~\ref{thm:Zd-limit} in this more general setting.

Next, we see how to adapt the proof of Lemma~\ref{Llim}.
We need a bound on the event $\mathcal{E}_r(T)$.
For times $t$ such that $Z_\0(t,\bgz^{-,r})<Z_\0(t,\bgz^{+,r})$ and $Z_\0(t,\bgz^{+,r})>0$,
define the \emph{excess ball} (at time $t$) as the red ball 
with smallest label (in a fixed order) that is at $\0$ in
$\cZ(t,\bgz^{+,r})$ but does not exist in $\cZ(t,\bgz^{-,r})$;
if 
$Z_\0(t,\bgz^{-,r})<Z_\0(t,\bgz^{+,r})\le0$
define the {excess ball} as the blue ball 
with smallest label that is at $\0$ in
$\cZ(t,\bgz^{-,r})$ but does not exist in $\cZ(t,\bgz^{+,r})$.
For completeness, if $Z_\0(t,\bgz^{-,r})=Z_\0(t,\bgz^{+,r})$, define the excess
ball as an extra (non-existing) ball with its own clock.
Let $\tau$ denote the first time at which 
$Z_\0(t,\bgz^{-,r})<Z_\0(t,\bgz^{+,r})$ (with $\tau=\infty$ if this
never happens).
\refL{Lmon} gives~\eqref{eq:Lbounda} as before, and since
no two nucleations occur simultaneously, note that
\as{} $\mathcal{E}_r(T)=\{\tau\le T\}$. 

Let $F$ be the event that the excess ball 
does not die in the interval $(\tau,T]$. 
Then, on the event $\mathcal{E}_r(T)\cap F$,
\eqref{eq:Lbound} holds for $t=\tau$ and an
induction argument as in the proof of Lemma~\ref{Lmon} implies that
also \eqref{we} holds.
Consequently, Markov's inequality yields that the right-hand side of
\eqref{tho} is a bound  for
$\Pr(\mathcal{E}_r(T)\cap F)$. 
To complete the proof it suffices to note that, since $\tau$ is a stopping
time and $\cE_r(T)$ is determined by $\tau$,
\begin{align}
 \Pr(F\mid \tau) = e^{-(T-\tau)_+}\ge e^{-T} 
\end{align}
and 
\begin{equation}
\Pr\bigpar{\mathcal{E}_r(T)\cap F}
\,=\,\E\bigl[\ind_{\mathcal{E}_r(T)}\Pr(F\mid\tau)\bigr]
\,\ge\, e^{-T}\Pr\bigl(\mathcal{E}_r(T)\bigr).
\end{equation}
The rest of the proof is the same as before.

Finally, we see how to prove Lemma~\ref{L0} in the more general setting. As before, it will suffice to consider the monochromatic process $\cY^p(t)$. Let $N$ denote the number of balls (in the monochromatic process) that arrive at the origin during the interval $[a,b]$, and $D$ the number of balls already present at time $a$ that die before time $b$. Then, using Markov's inequality as before,
\begin{equation}
 \Pr\bigl(Y_\0^p(t) \text{\rm\ is {not} constant for $t\in [a,b]$}\bigr)\le\E[N]+\E[D].
\end{equation}
Let $D'$ the number of balls that arrive after time $a$ and die before time $b$, and note that
\begin{equation}
\E[D]\le(b-a)\E[Y^p_\0(a)]\quad\text{and}\quad\E[D']\le(b-a)\E[N].
\end{equation}
In addition, $N= Y_\0^p(b)-Y_\0^p(a)+D+D'$, so under the assumption that
$b-a\le1/2$,
\begin{align}
\E[N]\le2\E[Y_\0^p(b)-Y_\0^p(a)+D]\le 2(e^{\lambda b}-e^{\lambda a})+2(b-a)e^{\lambda a}\le4(b-a)(\lambda+1)e^{\lambda T}.
\end{align}
The rest is silence.

\section{Open problems and further directions}\label{sec:open}

We round off with some open problems and suggested directions for further study, inspired by the results above. We give also some comments on possible extensions, some of which seem easy, but we leave them for the reader to check.

The problems may be considered for general branching rules, much like in the present paper, but in some cases (such as for the first question) it may make more sense for a specific branching rule (such as the nearest-neighbour rule, in which $\varphi$ is the deterministic configuration that puts a ball at each of the $2d$ neighbours of the origin). In some cases we even expect that the answer to the question may depend on the branching rule, much opposed to the results reported in this paper.

\begin{enumerate}

\item For $d=1$, what is the length of a typical monochromatic interval?

\item For $p>1/2$, at what rate does the density of blue sites tend to zero?

\item For $p=1/2$, at what rate does a site change colour?

\item For $p=1/2$, how may balls are contained at the origin at a given
  time? Proposition~\ref{prop:deviation} provides a partial answer, and Lemma~\ref{LVarZ} a matching upper bound. Is it true that $|Z_\org(t)|=\Theta(t^{-d/4}e^{\lambda t})$ with high probability, or does the density of times for which it holds tend to 1 as $t\to\infty$?
  Our arguments do not even seem to give the weaker conclusion that
$\Pr\bigr(Z_\0(t)=0\bigr)\to0$, and thus (by symmetry) 
  $ \rho(t)=\Pr\bigl(Z_\0(t)>0\bigr)\to1/2$ as $t\to\infty$. 
  
\item 
Are the moment conditions in Theorems \ref{thm:fixnonfix} and
\ref{thm:Zd-limit} necessary? In particular, is the condition $\E[\|\varphi\|^2]<\infty$, instead of $\E[\|\varphi\|^3]<\infty$,
sufficient for the conclusion of Theorem~\ref{thm:Zd-limit} to hold when $d=1$?

\item We have in this paper considered competition between two types. It would be interesting to extend our results to three or more competing types. We believe that it may be challenging to find a substitute for the conservative process described in Section~\ref{sec:coupling}. Problems of a similar character were suggested also in~\cite{ahlgrijanmor19}.

\item 
We assumed throughout the paper that the initial configuration has at most
one ball at each site. We can more generally consider initial configurations
$(\zeta_\x)_{\x\in\Zd}$ where the $\zeta_\x$ are \iid{} with an arbitrary
distribution.
We expect that the results above generalize rather easily under some moment
condition on $\gz_\x$, but we have not checked the details. We expect that it is less straightforward to adapt our techniques to allow the different types to jump at different rates, or reproduce according to different rules.

\item In the model studied by Bramson and
  Lebowitz~\cite{braleb91a,braleb91b}, no particles are born, and particles move
  according to independent continuous-time symmetric random walks. 
This can be regarded as an extreme case of our model (not covered above),
where balls die as in \refS{s:death} and the offspring $\gf$
consists of a single ball. 
For this
  model Cabezas, Rolla and Sidoravicius~\cite{cabrolsid18} have shown that,
  under weak assumptions, \as{} there exist arbitrarily large times when the
  origin is occupied.
A more detailed conjecture, which seems to be open,
  would be that, starting from a $p$-random Bernoulli initial colouring, the
  origin is \as{} visited by both colours infinitely many times when
  $p=1/2$, but not when $p>1/2$.

\item Consider the urn process on $\Z^d$ run from an initial configuration
  with a single red and a single blue ball. Under what conditions will
  both red and blue balls remain in the system at all times (so-called coexistence)
  with positive probability? For $d=1$, in the event of
  coexistence, under what conditions does an `interface', that is a
  macroscopic division, between red and blue exist, and how does it evolve
  over time? What is the analogous higher-dimensional phenomenon? Some
  progress have been made to these questions for a related model by Ahlberg,
  Angel and Kolesnik~\cite{ahlangkol}.

\end{enumerate}

\section*{Acknowledgements}
The authors are very grateful to Robert Morris, for
    his encouragement to pursue this project, and his valuable input in
    several joint discussions. The authors are also grateful to
    Luca Avena and Conrado da Costa for informing
    about the work of Pruitt~\cite{pruitt66} (cf. Remark~\ref{prutt}), to
    IMPA
 and to the Isaac Newton Institute, 
where parts of this work were done,
and to an anonymous referee who found a gap in our argument.

This work was in part supported by 
grant 2016-04442 from the Swedish Research Council (DA);
CNPq bolsa de produtividade Proc.~310656/2016-8 
and FAPERJ Jovem cientista do nosso estado Proc.~202.713/2018 (SG); 
the Knut and Alice Wallenberg Foundation,
the Isaac Newton Institute for Mathematical Sciences (EPSRC Grant Number
EP/K032208/1), 
and the Simons foundation (SJ).


\medskip

{\small
\noindent
{\sc Daniel Ahlberg\\
Department of Mathematics, Stockholm University\\ 
SE-10691 Stockholm, Sweden}\\
\url{http://staff.math.su.se/daniel.ahlberg/}\\
\texttt{daniel.ahlberg@math.su.se}\\

\noindent
{\sc Simon Griffiths\\
Departamento de Matem\'atica, PUC-Rio \\
Rua Marqu\^{e}s de S\~{a}o Vicente 225, G\'avea, 22451-900 Rio de Janeiro, Brasil}\\
\texttt{simon@mat.puc-rio.br}\\

\noindent
{\sc Svante Janson\\
Department of Mathematics, Uppsala University\\
PO Box 480\\
SE-75106 Uppsala, Sweden}\\
\url{http://www2.math.uu.se/~svante}\\
\texttt{svante.janson@math.uu.se}\\
}

\end{document}